\documentclass[a4paper]{article}

\title{Beginner's guide to Aggregation-Diffusion Equations}
\author{%
    David Gómez-Castro%
    	\thanks{Departamento de Matemáticas, Universidad Autónoma de Madrid. \href{mailto:david.gomezcastro@uam.es}{david.gomezcastro@uam.es}}
}

\usepackage[utf8]{inputenc}
\DeclareUnicodeCharacter{00A0}{ }

\usepackage[
url=false,
isbn=false,
backend=biber,
uniquename=init,
giveninits,
maxbibnames=100,
doi=false,
sortcites=true,
style=alphabetic,
maxalphanames=5,minalphanames=4,
sorting=nyt,
date = year,
eprint=false,
block=none]{biblatex}
\ExecuteBibliographyOptions[online]{eprint=true}

\AtBeginRefsection{\GenRefcontextData{sorting=ynt}}
\AtEveryCite{\localrefcontext[sorting=ynt]}

\usepackage{csquotes}

\renewbibmacro{in:}{}
\DeclareFieldFormat[article]{citetitle}{#1}
\DeclareFieldFormat[article]{title}{#1}

\AtEveryBibitem{%
  \ifentrytype{online}
    {\clearfield{year}
    \clearfield{date}
    \clearfield{url}
    \clearfield{urldate}
    \clearfield{eprintclass}
    \clearfield{urlyear}
    \clearfield{pubstate}}
    {}%
}

\addbibresource{bibliography/bibliography.bib}
\addbibresource{bibliography/PDE.bib}
\addbibresource{bibliography/Keller-Segel.bib}
\addbibresource{bibliography/PME.bib}
\addbibresource{bibliography/free-energy.bib}
\addbibresource{bibliography/Aggregation.bib}
\addbibresource{bibliography/Wasserstein.bib}
\addbibresource{bibliography/Euler-Lagrange-U=0.bib}
\addbibresource{bibliography/relative-entropy.bib}
\addbibresource{bibliography/numerics.bib}

\usepackage{amsmath,amssymb,mathtools,amsthm}
\usepackage{xcolor,graphicx,float,enumerate}

\usepackage[font=small]{caption}
\usepackage{subcaption}

\usepackage[hidelinks]{hyperref}

\numberwithin{equation}{section}

\usepackage[noabbrev,nameinlink]{cleveref}

\usepackage{esint}

\newtheorem{proposition}{Proposition}[section]
\newtheorem{theorem}[proposition]{Theorem}
\newtheorem{corollary}[proposition]{Corollary}
\newtheorem{lemma}[proposition]{Lemma}
\theoremstyle{definition}\newtheorem{definition}[proposition]{Definition}

\newtheorem{remark}[proposition]{Remark}
\newtheorem*{theorem*}{Theorem}

\allowdisplaybreaks[1]

\hyphenation{Sprin-ger}
\hyphenation{pa-ra-bo-lic}

\usepackage{amsmath,amssymb}

\newcommand{\ee}{\varepsilon}
\newcommand{\unk}{}

\newcommand{\calK}{{\mathcal K}}
\newcommand{\calF}{{\mathcal F}}
\newcommand{\Prob}{{\mathcal P}}

\newcommand{\R}{\mathbb R}
\newcommand{\Rd}{{\R^d}}

\newcommand{\indicator}[1]{1_{#1}}

\DeclareMathOperator{\diver}{div}

\DeclareMathOperator{\sign}{sign}
\DeclareMathOperator{\supp}{supp}

\newcommand*\diff{\mathop{}\!\mathrm{d}}
\newcommand{\defeq}{\vcentcolon=}

\newcommand{\dr}{\diff r}

\newcommand{\Wass}{{\mathfrak{W}}}
\newcommand{\calU}{\mathcal U}
\newcommand{\calW}{\mathcal W}

\DeclareMathOperator*{\argmin}{argmin}

\usepackage{fancyhdr}
\pagestyle{plain}
\date{}

\usepackage[]{geometry}
\geometry{
    a4paper,
    total={165mm,240mm},
    }

\usepackage[
    final
]{changes}

\definechangesauthor[name={Reviewer 1},color=blue]{R1}
\definechangesauthor[name={Other comments},color=purple]{O}

\begin{document}

\maketitle

\begin{abstract}
	The aim of this survey is to serve as an introduction to the different techniques available in the broad field of Aggregation-Diffusion Equations.
	We aim to provide historical context, key literature, and main ideas in the field.
	We start by discussing the modelling and famous particular cases: Heat equation, Fokker-Plank, Porous medium, Keller-Segel, Chapman-Rubinstein-Schatzman, Newtonian vortex, Caffarelli-Vázquez, McKean-Vlasov, Kuramoto, and one-layer neural networks. In Section 4 we present the well-posedness frameworks given as PDEs in Sobolev spaces, and gradient-flow in Wasserstein. Then we discuss the asymptotic behaviour in time, for which we need to understand minimisers of a free energy. We then present some numerical methods which have been developed. We conclude the paper mentioning some related problems.
\end{abstract}

\bigskip

\setcounter{tocdepth}{1}
\tableofcontents

\comment{
	\textbf{Reply to the referees.} \\
	Comments in blue are replies to the referee. \\
	Comments in purple are additions suggested by other colleagues since the publication of the preprint
}

\newpage
		
\section{Introduction}

The aim of this survey is to serve as a general introduction to the many models and techniques used to study the \emph{Aggregation-Diffusion} family of equations
\begin{equation}
	\label{eq:ADE}	
	\tag{ADE}
	\frac{\partial \rho}{\partial t} =   \diver \Big( \rho \nabla (    U'(\rho)  
	+ 
	V 
	+ 
	W * \rho 
	) \Big).
\end{equation}
This survey does not intend to provide an encyclopedic presentation including all possible references and results in maximal generality (this would be impossible), but to give newcomers a general overview of the subject.

\section{Modelling}
Let us discuss first the modelling, and then we devote \Cref{sec:examples} to several modelling problems which belong to this family.
On the one hand, diffusion is usually modelled as a conservation law.

\subsection{Continuity equations}
Let $\rho$ be a density and $\omega \subset \Rd $ any control volume, if $\mathbf F$ is the out-going flux 
\begin{equation}
	\label{eq:conservation control volume}
	\frac{\diff }{\diff t} \int_\omega \rho \diff x =- \int_{\partial \omega} \mathbf F \cdot \mathbf n \diff S=  -\int_{ \omega} \diver \mathbf F \diff x
\end{equation} 
Hence we arrive at the \emph{continuity equation}
\begin{equation} 
	\label{eq:conservation law}
	\frac{\partial \rho}{\partial t} = -\diver \mathbf F.
\end{equation}

For the transport of heat, we use \textit{Fourier's law} to model the flux $\mathbf F =  -D \nabla \rho$ and yields the heat equation
\begin{equation*}
	\tag{HE}
	\label{eq:HE}
	\frac{\partial \rho}{\partial t} = D
	\Delta \rho .
\end{equation*}
A generalisation of this problem which is used to the flow a gas through porous media. We derive the model as mass balance by writing the flux in terms of a velocity $\mathbf v$, i.e., $\mathbf F = -\rho \mathbf v$; we use \textit{Darcy's law} to relate the velocity with the pressure $p$ as $\mathbf v = - \frac{k} \mu \nabla p$; and lastly we use a general \textit{state equation} $p = \phi(\rho)$, where for perfect gases $\phi (\rho) = p_0 \rho^\gamma$. Eventually, we recover that
$\mathbf F =  - \nabla \Phi (\rho )$ for some non-decreasing $\Phi : \R \to \R$, and this yields the so-called \emph{Porous Medium Equation}
\begin{equation*}
	\tag{PME$_\Phi$}
	\label{eq:PME Phi}
	\frac{\partial \rho}{\partial t} = 
	\Delta \Phi( \rho ).
\end{equation*}
It is common to select $ \Phi (\rho) = \rho^m$ for $m > 0$. The most complete reference for this equation is \cite{Vazquez2007}.
Notice $\Delta \Phi(\rho) =  \diver ( \Phi'(\rho) \nabla \rho  )$ so we take 
\begin{equation} 
	\label{eq:Phi and U}
	U'' (\rho) = \frac{\Phi' (\rho)}\rho.
\end{equation}

The classical modelling also allows for a transport velocity field $\mathbf v (t,x)$ to be included. For example, if a pollutant is being transported by water, then $\mathbf v(t,x)$ is the velocity of the water.
The flux of mass being moved is recovered from the density and the velocity as $\rho \mathbf v$. 
A general flux then looks like
$$
	\mathbf F = - \nabla \Phi(\rho) + \rho \mathbf v.
$$

\begin{remark}
	We do not discuss the case of sign-changing solutions. In that setting $u^m$ must be replaced by $|u|^{m-1} u$.
\end{remark}

\subsection{Particle systems}

We can also understand transport from the particle perspective. Consider a known velocity field $\mathbf v(t,x)$. 
Consider $N$ particles with positions $X_i (t)$ of equal masses $1/N$ moving in the velocity field
\begin{equation*}
	\frac{\diff X_i}{\diff t} = 
	\mathbf v(t, X_i(t))
	, \qquad i = 1, \cdots, N.
\end{equation*}
Notice that the evolution of the particles is decoupled.
To recover a PDE we consider the so-called
\emph{empirical distribution} defined as 
\begin{equation}
	\label{eq:empirical distribution}	 
	\mu_t^N = \sum_{j=1}^N \frac 1 N \delta_{X_j(t)} , 
\end{equation}
where $\delta_x$ is the Dirac delta at a point $x$.
First, notice that
\begin{align*}
	\frac{\diff}{\diff t} \int_\Rd \varphi(t,x) \diff \mu^N_t (x) &= \frac{\diff}{\diff t} \frac 1 N \sum_j \varphi(t, X_j(t)) \\
	&= \frac 1 N \sum_j \left( \frac{\partial \varphi}{\partial t}(t, X_j(t)) + \nabla \varphi(t,X_j(t)) \cdot \frac{\diff X_j}{\diff t} \right)\\
	&= \frac 1 N \sum_j \left( \frac{\partial \varphi}{\partial t}(t, X_j(t)) + \nabla \varphi(t,X_j(t)) \cdot \mathbf v(t, X_j(t)) \right)\\
	&= \int_\Rd  \left( \frac{\partial \varphi}{\partial t}(t, x) + \nabla \varphi(t,x) \cdot \mathbf v(t, x) \right) \diff \mu^N_t (x)
\end{align*}
If $\varphi \in C^1_c ((0,\infty) \times \Rd)$ (i.e., $\varphi(0,x) = 0$ and for $T$ large enough $\varphi(t,x) = 0$ for $t \ge T$) we integrate in time to recover
\begin{equation*}
	\int_0^\infty \int_\Rd \Big( \frac{\partial \varphi}{\partial t} + \mathbf v  \cdot \nabla \varphi \Big ) \diff \mu^N_t \diff t= 0.
\end{equation*}
Integrating by parts, we observe that this is the distributional formulation of the  
\emph{continuity equation}
\begin{equation*}
	\partial_t \mu^N +
	\diver (\mu^N  \mathbf v ) = 0.
\end{equation*}

\bigskip 

Aggregation and Confinement can be modelled by considering \emph{interacting particles} where the velocity field is aware of other particles
\begin{equation*}
	\frac{\diff X_i}{\diff t} =  
	-
	\sum_{\substack{ j=1 \\ j \ne i}}^N 
		\frac 1 N \nabla W (X_i - X_j)
		- \nabla V (X_i)
	, \qquad i = 1, \cdots, N.
\end{equation*}
The first term models the interactions between particles, with attract/repel each other as a function of their relative position, and the second term models a field attracting/repelling every particle.   
Noticing that 
$$
\sum_{\substack{ j=1 \\ j \ne i}}^N 
\frac 1 N \nabla W (x - X_j(t)) = \int_\Rd \nabla W (x-y) \diff \mu^N_t (y)
$$
we deduce from the previous reasoning that 
the empirical measure
$\mu^N$ is a distributional solution of the \textit{Aggregation-Confinement Equation}
\begin{equation}
	\partial_t \mu =  
	\diver (\mu   \nabla (  W * \mu +  V   ) ).
\end{equation}
Linear diffusion can be added to the particle system by introducing noise and working with Stochastic ODEs and a mean-field approximation (see \Cref{sec:McKean-Vlasov}).

\bigskip 

As we will see below, special attention has been paid to the case $V = 0$ which is simply called \textit{Aggregation Equation}
\begin{equation*}
	\tag{AE}
	\label{eq:AE}
	\frac{\partial \rho}{\partial t} = \diver (\rho \nabla W * \rho).
\end{equation*}
As a toy model we will also discuss the \textit{confinement problem}
\begin{equation*} 
	\tag{CE}
	\label{eq:ADE U,W = 0}
	\frac{\partial \rho}{\partial t} = \diver(\rho \nabla V ).
\end{equation*} 

\bigskip

Joining the many-particle approximation for aggregation with the Porous Medium diffusion we recover \eqref{eq:ADE}.
If we look for solutions defined in the whole space, then it is natural to consider only the case of finite mass. Hence, we assume $\rho \in L^1 (\Rd)$ and, in some sense,
\begin{equation}
	\label{eq:BC Rd}
		\rho_t(x)  \to 0  \text{ as } |x| \to \infty. \\
\end{equation}

\subsection{Boundary conditions in bounded domains of \texorpdfstring{$\Rd$}{Rd}} 

The problems above are often posed in the whole space $\Rd$. However, sometimes it is convenient to restrict them to bound domains, specially for the numerical analysis. Here and below we will denote by $\Omega$ an open and bounded set of $\Rd$ with a smooth boundary. Although it is possible to study these kinds of problems in less smooth domain (Lipschitz boundary, interior point conditions, etc…) this introduces additional difficulties we will not discuss.
Since with this kind of equation we intend to model systems that conserve mass, there are two natural approaches. 

\subsubsection{No-flux conditions}

We can stablish that no mass will cross the boundary of $\Omega$, $\partial \Omega$. Going back to \eqref{eq:conservation control volume}, a natural way is to establish no-flux conditions $\mathbf F \cdot \mathbf n = 0$ on $\partial \Omega$. With our choice of flux this means
\begin{equation*}
	\tag{BC}
	\label{eq:BC Omega}
	\rho \nabla (  U'(\rho) + V + W*\rho   ) \cdot \mathbf n = 0  \qquad \text{ for } t > 0 \text{ and } x \in \partial \Omega
\end{equation*}
where again $\rho_0$ is known, and we have to be a little careful with the notion of convolution when $\rho$ is only defined in $\Omega$. For this, we consider the extended notion
\begin{equation*}
	W*\rho \equiv W* \widetilde \rho, \qquad \text{where } \widetilde \rho = \begin{dcases}
		\rho & \text{in } \Omega, \\
		0 & \text{elsewhere}.
	\end{dcases}
\end{equation*}

In fact, the problem \eqref{eq:ADE} with general $V$ and $W$ in a bounded domain is rather delicate in terms of a priori estimates (see \Cref{sec:Lp estimate}). It is sometimes preferable to discuss the generalisation
\begin{equation*}
	\tag{ADE$^*$}
	\label{eq:ADE Omega}
	\frac{\partial \rho}{\partial t} =   \diver \Big( \rho \nabla (    U'(\rho)  
		+ 
		V 
		+ 
		\mathcal K \rho
		) \Big), 
		\qquad 
		\text{where }
		\mathcal K \rho (x) = \int_\Omega K(x,y) \rho(y) \diff y, 
\end{equation*}
and with the corresponding boundary conditions
\begin{equation*}
	\tag{BC$^*$}
	\label{eq:BC Omega star}
	\rho \nabla (  U'(\rho) + V + \mathcal K \rho   ) \cdot \mathbf n = 0 , \qquad \text{on } \partial \Omega.
\end{equation*}
If $\Omega$ is bounded we will choose 
\begin{equation} 
	\label{eq:V and K boundary condition}
	\nabla V \cdot \mathbf n = 0 \text{ in } \partial \Omega, \qquad \text{ and } \qquad 
	K \in C_c (\Omega \times \Omega) .
\end{equation}
A very typical example is 
\begin{equation}
	K(x,y) = \eta(x) W(x-y) \eta(y)
\end{equation}
with $\eta \in C^\infty_c (\Omega)$.

\subsubsection{Periodic boundary conditions}

Another approach is to stablish that any mass that crosses the boundary will ``come back'' on the opposite side. This happens, for example, if all data are periodic.
Some papers have studied periodic solutions defined over the unit cube $[0,1]^d$ (or, equivalently, the torus $\mathbb T^d$). In $d = 1$ this is equivalent to studying solutions defined over the unit circle $\mathbb S^1$. 
This is the case of the Kuramoto model described below.
An example of the case $d = 2$ can be found in \cite{carrillo2020LongTimeBehaviourPhase}.

\subsection{Free energy}

We conclude this section by introducing a functional which will be very useful below. We will show how 
\eqref{eq:ADE} is related to the free energy
\begin{equation*}
	\tag{FE}
	\label{eq:free energy}
	\mathcal F(\rho) =\int_\Omega U(\rho)  + \int_\Omega V \rho +  \frac 1 2 \iint_{\Omega \times \Omega } W(x-y) \rho(x) \rho(y) \diff x \diff y ,
\end{equation*} 
whereas \eqref{eq:ADE Omega} is related to the more general free energy
\begin{equation*}
	\tag{FE$^*$}
	\label{eq:free energy star}
	\mathcal F(\rho) =\int_\Omega U(\rho)  + \int_\Omega V \rho +  \frac 1 2 \iint_{\Omega \times \Omega } K(x,y) \rho(x) \rho(y) \diff x \diff y .
\end{equation*} 
For convenience, let us define the function
\begin{equation}
	\label{eq:first variation}
	\frac{\delta \mathcal F}{\delta \rho} [\rho] \defeq U'(\rho) + V + \int_\Omega K(\cdot,y)\rho(y) \diff y.
\end{equation}
This is called \textit{first variation of the free energy}, and  we will explain its importance below in \Cref{sec:free energy dissipation}.
Hence, \eqref{eq:ADE Omega} can be re-written as
\begin{equation}
	\label{eq:Wasserstein grad flow}
	\frac{\partial \rho}{\partial t} = \diver\left(\rho \nabla \frac{\delta \mathcal F}{\delta \rho}\right)
\end{equation} 
for $\calF$ given by \eqref{eq:free energy}.

Occasionally, it will be convenient for us to use the free energies of each of the terms. Hence, we define
\begin{equation}
    \label{eq:free energy U} 
    \calU[\rho] = \int_\Rd U(\rho), \qquad \mathcal V[\rho] = \int_\Rd V \rho, 
	\qquad
	\mathcal W[\rho] = \frac 1 2 \iint_\Rd W(x-y) \rho(x) \rho(y) \diff x \diff y.
\end{equation} 

\subsection{A comment on mass conservation}

Our evolution problem need, of course, an initial state $\rho_0$. 
Throughout this survey we will usually restrict ourselves non-negative unit mass initial data, i.e., 
\begin{equation*} 
	\tag{U$_0$}
	\label{eq:unit mass initial}
	\int_{\Rd} \rho_0(x) = 1.
\end{equation*}
All the equations presented above are of ``conservation type''. This means we expect preservation of mass, i.e., that provided unit initial mass \eqref{eq:unit mass initial} then there will be non-negative unit initial mass for all times, i.e., 
\begin{equation*}
	\tag{U}
	\label{eq:unit mass}
	\int_\Rd \rho_t = 1 , \qquad \forall t > 0.
\end{equation*}

\section{Famous particular cases}
\label{sec:examples}

\subsection{Heat equation and Fokker-Plank}

As explained above, the heat equation \eqref{eq:HE} is the most classical example in our family of equations.
It corresponds to \eqref{eq:ADE} with the choices $U = \rho \log \rho$ and $V = W = 0$.   
In $\Rd$ 
It is well known that \eqref{eq:HE} admits the Gaussian solution
$$
    K_t (x) = (4 \pi t)^{-\frac d 2} \exp \left( - \frac{|x|^2}{4t}   \right).
$$
All solutions (with suitable initial data) are precisely of the form
$$
	\rho_t = K_t * \rho_0,
$$
and it is not difficult to check that
$$
	\|\rho_t\|_{L^1} = M , \qquad \| \rho_t - M K_t \|_{L^1} \to 0 \quad{ as } t \to \infty.
$$
A good survey on the matter can be found in \cite{Vazquez2017}. 

\medskip

The fundamental solution $K_t$ is of the so-called \emph{self-similar} type
\begin{equation*} 
	\tag{SS}
	\label{eq:self-similar}
    K_t (x) = A(t) F\left( \frac x {\sigma(t)} \right).
\end{equation*} 
In this expression, $F$ is usually called self-similar profile and $\sigma(t), A(t)$ the scaling parameters. Due to mass conservation the natural choice is $A(t) = \sigma(t)^{-d}$. Plugging \eqref{eq:self-similar} into \eqref{eq:HE} and setting the total mass $\int K_t = 1$ yields the Gaussian. Since \eqref{eq:HE} is linear, $MK_t$ is a solution of mass $M$.
Usually, the self-similar solution has $\sigma(t)$ increasing, and it only makes sense to consider $\sigma(t) \ge 0$. Hence, we make the choice $\sigma(0) = 0$ that corresponds to $K_t = \delta_0$, a Dirac delta at $0$. This type of solutions with $\rho_0 = \delta_0$ are called \textit{source-type solutions}. 

\medskip 

This nice convolutional representation is not available in more general settings. There are several other ways to study the asymptotic behaviour.
One of the key tricks is the study of the self-similar change of variable 
$$
\rho(x,t) = u(y,\tau) (1 + t)^{-\frac d 2}, \qquad 2\tau = \log(1 + t),  
\added[id=R1]{%
	\qquad y = (1+t)^{-\frac 1 2} x,
}%
$$
which leads to the so-called linear Fokker-Planck equation
\begin{equation*} 
\tag{HE-FP}
\label{eq:Fokker linear}
\frac{\partial u}{\partial \tau} = \Delta_y u + \diver (u y ).
\end{equation*}
This equation corresponds to 
$$U_1 (\rho) = \rho \log \rho,$$
$V = \frac{|x|^2}{2}$ and $W = 0$.
Notice that stationary states can be obtained as solutions of the equation for the flux $\log u + \frac{|x|^2}{2} = -h$. This yields the Gaussian profile
\begin{equation}
	\label{eq:Gaussian}
	\widehat u(y) = AG(y), \qquad G(y) = \frac{1}{\sqrt{2\pi}} e^{-\frac{|y|^2}{2}}.
\end{equation}
The constant $A$ is the unique value such that $\int \rho_t = M$. In original variable, we recover $M K_t$.

\subsection{Porous medium equation}

As presented in the introduction, the most common example of \eqref{eq:PME Phi} is the Porous Medium Equation
\begin{equation*}
	\tag{PME}
	\label{eq:PME}
	\frac{\partial \rho}{\partial t} = \Delta \rho^m .
\end{equation*}
The range $m \in (0,1)$ is sometimes called \emph{Fast Diffusion Equation}.
The \eqref{eq:PME} corresponds, for $m \ne 1$, to 
$$U_m (\rho) = \frac{\rho^m}{m-1}$$ 
and $V = W = 0$. Notice that as $m \to 1$ we have $\rho^{m-1}/(m-1)$ tends to $\log \rho$ and hence $U_m$ tends to
$U_1$.
Notice that, due to the homogeneity of the equation, after rescaling space and time, we can preserve the equation (without introducing constants) and can work with \eqref{eq:unit mass initial}.

\bigskip 

Similarly to the heat equation, a suitable change of variable (detailed below in \Cref{sec:self-similar PME}) converts \eqref{eq:PME} into
\begin{equation*} 
	\tag{PME-FP}
	\label{eq:Fokker PME}
	\frac{\partial u}{\partial t} = \Delta u^m  + \diver (u x)
\end{equation*}
This corresponds to $U = U_m$ and $V = \frac{|x|^2}{2}$ and $W = 0$.
\eqref{eq:Fokker PME} admits a stationary solution given by the so-called Barenblatt profiles
\begin{equation} 
	\label{eq:Barenblatt profile}
	\hat u(x) = F_B(x) , \qquad F_B(x) \defeq \left(C - \tfrac{m-1}{2m}{|x|^2}\right)^{\frac{1}{m-1}}_+ ,
\end{equation}
where $C \ge 0$.
For $m > 1$ they have compact support, and for $m \in (0,1)$ they have power-like tails. To check whether finite-mass states are available we compute
$$
	\int_\Rd F_B \approx c + \int_1^\infty r^{\frac{2}{m-1}} r^{d-1} \dr .
$$	
Hence, finite mass steady states are only when $m > \frac{d-2}{d}$. This number is critical, as well will see below.
Undoing the change of variables we arrive at the self-similar solution
\begin{equation*} 
    \tag{ZKB}
	\label{eq:Barenblatt}
    B_t = t^{-\alpha} (C - k|x|^2t^{-2\beta})^{\frac 1 {m-1}}_+, \qquad \text{where } \alpha = \tfrac{d}{d(m-1)+2},\, \beta = \tfrac{\alpha}{d},\, k = \tfrac{\alpha(m-1)}{d}.
\end{equation*}
This solution is usually denoted ZKB after Zel’dovich and Kompaneets, and Barenblatt (see \cite{Vazquez2007} and the references therein). Usually this is shortened to Barenblatt.
When $m > \frac{d-2}{d}$ there is a unique constant $C$ so that \eqref{eq:unit mass}. It can be explicitly computed (see \cite[Section 17.5]{Vazquez2007}).

\bigskip

As we let $m \searrow \frac{d-2}{d}$ we get $C \nearrow \infty$, so for $m = \frac{d-2}{d}$ we expect $B_t = \delta_0$. 
In fact, Brezis and Friedman \cite{BrezisFriedman1983} showed that if $m \le \frac{d-2}{d}$ and $\rho_0^{(j)} \in L^1 (\Rd)$ are a sequence of initial data converging to $\delta_0$, then the corresponding solutions converge to $\rho^{(j)} (t,x) \to \delta_0 (x) \otimes 1(t)$. Therefore, \emph{very fast diffusion cannot diffuse a $\delta_0$}. This is because the diffusion coefficient is $m\rho^{m-1}$. For $m \ll 1$ this means that small values of $\rho$ are indeed diffused fast (hence the name of ``fast diffusion''). However, in regions where $\rho$ is large the diffusion is slow.

\bigskip

It is also worth pointing to the equivalent formulation in the so-called pressure variable $u = U'(\rho)$. We recover the Hamilton-Jacobi type equation
$$
	\frac{\partial u}{\partial t} = \Phi'(\rho) \Delta u + |\nabla u|^2,
$$
where we recall the relation \eqref{eq:Phi and U}.
Notice that $\Phi'(\rho)$ can be written as a non-linear function of $u$.
This formulation is convenient when $m > 1$ since $u = \frac{m}{m-1} \rho^{m-1}$. This presentation is much better for the study of viscosity solutions 
(see \cite{CaffarelliVazquez1999,BrandleVazquez2005}). 

\bigskip

First results of asymptotic behaviour for $m > 1$ were obtained by regularity, self-similarity, and comparison arguments in the late 70s and early 80s (see \cite{friedman1980AsymptoticBehaviorGas} and the references therein). 
Self-similar analysis and relative entropy arguments were later developed to improve the theoretical understanding (see \Cref{sec:self-similar PME,sec:relative entropy PME}). 
For a detailed explanation of the theory see \cite{Vazquez2006,Vazquez2007}.

\begin{remark}
	\label{rem:maximum principle heat}
	\eqref{eq:HE} and \eqref{eq:PME} are ``diffusive''. This can be seen in different directions. For example, at a point $(t_0,x_0)$ of local maximum of $\rho$ we get $\Delta \rho^m \le 0$ and hence $\frac{\partial \rho}{\partial t} \le 0$, i.e., \emph{the maximum values decays}. 
	This is strongly related to the \textit{maximum principle}.
	The converse happens to the minimum if it exists (notice that non-negative solutions in $\Rd$ will tend to $0$ as $|x| \to \infty$). 
\end{remark}

\begin{remark}
	\label{eq:PME numerology 1}
	Let us recall some ``numerological values''. The value $m = 1-\frac 2 d$ is the critical value for finite-mass Barenblatt. 
	It is also be the critical value below which there is finite-time extinction (see \Cref{sec:conservation of mass}). 
	The \eqref{eq:Barenblatt profile} has finite $\lambda$-moment, i.e.,
	$$\int_\Rd |x|^\lambda B_t(x) \diff x < \infty$$
	if and only if $m > 1 - \frac{2}{d+\lambda} $.
	\added[id=R1]{%
	In \Cref{sec:convexity} we will present a third relevant value $m = 1 - \frac 1 d$ coming from the Wasserstein gradient-flow analysis.
	}%
\end{remark} 

\begin{remark}
	We will not discuss here the limit $m \to 0$. In this direction there are two possibilities. 
	On the one hand, some authors have discussed the rescaled limit $\frac{u^m}{m} \to \log u$, and we arrive at the logarithmic diffusion equation $\partial_t u = \Delta \log(u)$. We point to \cite[Section 8.2] {Vazquez2006} for a discussing on this topic and \cite[Section 8.4] {Vazquez2006} for historical notes.
	On the other hand, some authors choose to look at the limit
	$|u|^{m-1} u \to \sign(u)$ leads to the so-called Sign Fast Diffusion Equation
	$\partial_t u = \Delta \sign(u)$. Some results in this direction are available \cite{BonforteFigalli2012}.
\end{remark}

\subsection{Transport problem with known potential}
\label{sec:transport with known potential}

To understand the transport terms in the \eqref{eq:ADE} family, we will discuss the toy problem
$$
	\frac{\partial \rho}{\partial t} = \diver (\rho \nabla V)
$$
where $V$ is known and does not depend on $t$. 
Imagine that $\rho_0 \ge 0$ is radially symmetric function. 
Computing the balance of mass of a ball $B_r$ we get
\begin{equation}
	\label{eq:transport known potential}
	\frac{\diff}{\diff t}\int_{B_r} \rho_t = \int_{B_r} \diver (\rho \nabla V) = \int_{\partial B_r} \rho \nabla V \cdot x.
\end{equation}
If $\nabla V \cdot x \le 0$ all balls $B_r$ are ``loosing mass'', i.e., then mass if always flowing ``away'' from $0$ and towards infinity. Hence, we can think of these kinds of potentials as ``diffusive'' (but not as strongly as in the sense of \Cref{rem:maximum principle heat}). 
We will show below by explicitly solving these problems that potentials such that $\nabla V \cdot x \ge 0$ will attract mass towards $0$, asymptotically (or even instantaneously) creating a singularity.

\subsection{Keller-Segel model}
\label{sec:Keller-Segel}

The Keller-Segel \cite{keller1970InitiationSlimeMold} (or Patlak-Keller-Segel model \cite{patlak1953RandomWalkPersistence}) model for describes the motion of cells by chemotactical attraction by means of the coupled system
\begin{equation*}
	\tag{KSE} 
	\label{eq:Keller-Segel un-normalised}
	\begin{dcases}
		\frac{\partial \rho}{\partial t} = \Delta \rho - \diver (\rho \nabla V_t ) \\ 
		-\Delta V_t= \rho_t .
	\end{dcases}
\end{equation*}
This model was first studied for $d = 2$. 
Some authors replace the second equation by a more general evolutionary problem $\ee \frac{\partial V_t}{\partial t} - \Delta V_t + \alpha V_t = \rho_t$. 
We will discuss \eqref{eq:Keller-Segel un-normalised}, which can be thought as the limit $\ee, \alpha \to 0$.
This simplification was first introduced by \cite{nanjundiah1973ChemotaxisSignalRelaying} to model the case where the diffusion of the chemical is much faster than its production.

\bigskip

So far, this model is not in the \eqref{eq:ADE} family. However, the can analysis the equation for $V_t$ to recover some information.
In $\Rd$ we can use the Fourier transform $\mathrm F$ for the elliptic problem to recover
$$
	|\xi|^2 \mathrm{F} [V_t] (\xi) = \mathrm{F}[\rho_t]
$$
and hence we can solve and recover
$
	V_t = W_{\rm N} * \rho 
$,
where $W_{\mathrm N} = \mathrm{F}^{-1}[|\xi|^{-2}]$. In fact, we have the closed expression
\begin{equation}
	\label{eq:Newtonian potential} 
	W_{\mathrm N} (x) = 
	\begin{dcases}
		-\frac{1}{2\pi} \log|x| & \text{if } d = 2, \\
		\frac{1}{d(d-2)\omega_d} |x|^{2-d} & \text{if } d > 2.
	\end{dcases}
\end{equation} 
Notice that $-\Delta W_{\rm N} = \delta_0$.
The so-called \textit{Newtonian potential} corresponds to $\rm N = -W_{\rm N}$.
Eventually, we can write this problem as \eqref{eq:ADE} where $U = \rho \log \rho$, $V = 0$ and $W = -W_{\rm N}$.

We can think about the convolution term $W*\rho_t$ as an aggregation potential. The justification is the following. If $\rho_t$ is non-increasing, then $V_t = W*\rho_t$ is also non-increasing, and hence $\nabla V_t \cdot x \le 0$. Going back to \Cref{sec:transport with known potential}, this is what we called aggregation potentials. Unfortunately, we do not have in general the sign $\nabla V_t \cdot x \le 0$ for all $t,x > 0$. In fact, \eqref{eq:Keller-Segel un-normalised} does not ``prefer'' $x = 0$ from other points.

Changing the sign of the aggregation to $\frac{\partial \rho}{\partial t} = \Delta \rho + \diver (\rho \nabla V_t )$ makes the problem more regularising. In \cite{biler1992ExistenceAsymptoticsSolutions} the author proves global well-posedness of that problem for $d = 3$ in a bounded domain with no-flux condition was shown to have.

\bigskip 

Sometimes it is convenient to express the problem in a single equation using operator notation:
$$		
    \frac{\partial \rho}{\partial t} = \Delta \rho - \diver (\rho \nabla (-\Delta)^{-1} \rho). 
$$ 
When written in this form, the authors usually allow for any initial mass $\|\rho_0\|_{L^1}$. We can rescale to impose \eqref{eq:unit mass initial}.
If we let $\chi = \|\rho_0\|_{L^1}$ and $u(t,x) = \rho(t,x) / \chi$ we arrive at the equation
\begin{equation*} 
	\tag{KSE$_\chi$} 
	\label{eq:Keller chi}
	\frac{\partial u}{\partial t} = \Delta u - \chi \diver (u \nabla (-\Delta)^{-1} u). 
\end{equation*}

\bigskip 

In bounded domains some authors introduce Neumann boundary conditions $\frac{\partial u}{\partial n} = \frac{\partial V}{\partial n} = 0$. 
The authors of \cite{JaegerLuckhaus1992} extend the problem to $-\Delta V = \beta u - \mu$ which a few authors later called Jäger-Luckhaus problem (e.g., \cite{senba2005BlowupBehaviorRadiala}). This is equivalent to replacing $(-\Delta)^{-1}$ by $(-\Delta + \rm{id})^{-1}$.

\bigskip 

Some authors later introduced the Keller-Segel model with Porous Medium Diffusion. One of the model arguments is that $m > 1$ would deal with ``over-crowding'' (i.e., the formation of Dirac deltas). We arrive at the problem
\begin{equation*}
	\tag{KSE$_{m,\chi}$} 
	\label{eq:Keller PME}
		\frac{\partial \rho}{\partial t} = \Delta \rho^m - \chi \diver (\rho \nabla (-\Delta)^{-1} \rho). 
\end{equation*}
This model has the advantage of not having finite time blow-up (see \cite{CalvezCarrillo2006}).
Existence and uniqueness of solutions for $m > 1$ and a family of $W$ covering $W_{\rm N}$ can be found in \cite{bedrossian2011LocalGlobalWellposednessa}. 
Some authors replace $(-\Delta)^{-1}$ by $(-\Delta + \gamma \rm{id})^{-1}$, see, e.g., \cite{luckhaus2007AsymptoticProfileOptimal,sugiyama2006GlobalExistenceDecay,sugiyama2007ApplicationBestConstant,sugiyama2007TimeGlobalExistence}.

As we will discuss below, \eqref{eq:Keller PME} admits global solutions in the sub-critical range 
\added[id=R1]{%
	$m > 2-\frac 2 d$.  However, for $m < 2-\frac 2 d$
}%
there is finite-time blow-up. In the critical case blow-up depends on $\chi$. Curiously enough, the first case studied was $d = 2$ and $m = 1$, which is critical. A good explanation of this dichotomy is the free-energy minimisation argument which we will provide in \Cref{sec:existence of minimisers}.

\subsection{Chapman-Rubinstein-Schatzman}

In the context of superconductivity \cite{chapman1996MeanfieldModelSuperconducting} introduced
a mean-field model for ``the motion of rectilinear vortices in the mixed state of a type-II semiconductor''. They argue this is the limit where the Ginzburg-Landau parameter $\kappa$ tends to $+\infty$ and the number of vortices becomes large one recovers the problem
\begin{equation*}
	\tag{CRSE}
	\label{eq:CRSE}
	\begin{dcases}
		\frac{\partial \rho}{\partial t} = \diver( |\rho| \nabla V_t ) \\
		-\Delta V_t + V_t = \rho_t .
	\end{dcases}
\end{equation*}
The model was first introduced in $\R^2$. Then, the authors also make a particularisation to a bounded domain $\Omega$ where they introduce Dirichlet conditions for $V_t(x) = h_0$ on $\partial \Omega$, and flux conditions setting the number of vortices crossing the boundary $|\rho|\nabla v \cdot n$ known.

\bigskip 

In fact, when $\rho \ge 0$ it becomes of our class \eqref{eq:ADE} (or \eqref{eq:ADE Omega} in bounded domains). 
On $\Rd$ we can solve as before to recover
$
	V_t = W * \rho_t
$
where $W = \mathrm{F^{-1}}[ (1 + |\xi|^2)^{-1} ]$.
As long as $\rho \ge 0$, this model corresponds to \eqref{eq:ADE} where $U, V = 0$. 
If we work in a bounded domain $\Omega$ the homogeneous Dirichlet condition we recover \eqref{eq:ADE Omega} where $U, V = 0$ and $K$ is the Green kernel of the equation. Notice that $K$ is not a compactly supported in $\Omega \times \Omega$.

\bigskip 

There is extensive literature on this problem. First, there were different works applying PDE theory. First,  
\cite{chapman1996MeanfieldModelSuperconducting} proved existence of the problem posed in bounded domains. Then in
\cite{huang1998EvolutionMixedStateRegions} the authors work with non-negative $\rho_0$ and compactly supported, using generalised characteristics.
We also point to 
\cite{schatzle1999AnalysisMeanField} where the authors perform a vanishing viscosity analysis in bounded domains.

Later, some authors applied Optimal Transport methods, in particular the 2-Wasserstein gradient-flow approach. 
In \cite{ambrosio2008GradientFlowApproacha} the authors worked in with non-negative probability measures in a bounded domain, i.e., $\mathcal P_2 (\overline \Omega)$.
Later, the work was extended in \cite{ambrosio2011GradientFlowChapman} to a bounded domain and signed measures. They extend the Wasserstein distance to signed measures and in fact they minimise a localised energy in the space of signed unit measures in $\Rd$ to recover a problem of the form
$$
	\frac{\partial \rho}{\partial t} = \diver( \mathbf 1_\Omega |\rho| \nabla V_t ), \qquad \text{for } t> 0, x \in \Rd.
$$
This sparked broader interest in the family of aggregation equations \eqref{eq:AE}.

\subsection{Newtonian vortex problem} 
In \cite{lin1999HydrodynamicLimitGinzburgLandau} the authors justify that a different limit from Gizburg-Landau equations leads to the problem without zero-order term
\begin{equation*}
	\tag{NVE}
	\label{eq:Newtonian vortex}
	\begin{dcases}
		\frac{\partial \rho}{\partial t} = \diver (\rho \nabla V_t ) \\ 
		-\Delta V_t = \rho.
	\end{dcases}
\end{equation*}
Again, this can be set in $\Rd$ or in a bounded domain (with suitable boundary conditions).

\bigskip 

This problem also attracted several authors, who brought different approaches.
\cite{lin1999HydrodynamicLimitGinzburgLandau} the authors construct solutions by characteristics with the so-called ``vortex-blob'' method.
In \cite{masmoudi2005GlobalSolutionsVortex} the authors prove existence of renormalised solutions. 
The authors of \cite{bertozzi2012AggregationNewtonianPotential}  study positive and negative solutions. This is equivalent to studying non-negative solutions of the problems
$$
	\frac{\partial \rho}{\partial t} = \pm \diver (\rho \nabla W_{\rm N} * \rho ). 
$$
The $W = -W_{\rm N}$ can also be seen of the problem $W = W_{\rm N}$ but taken backwards in time.
In fact, the authors prove that the $+$ sign leads to asymptotic dispersion whereas $-$ leads to blow-up.

\subsection{The Caffarelli-Vázquez problem}

In \cite{CaffarelliVazquez2011a}, Caffarelli and Vázquez introduced a non-local porous medium-type equation given by
\begin{equation*}
	\begin{dcases}
		\frac{\partial \rho}{\partial t} = \diver (\rho \nabla V_t ) \\ 
		(-\Delta)^s V_t = \rho ,
	\end{dcases}
\end{equation*}
where $(-\Delta)^s$ denotes the fractional Laplacian. In $\Rd$ this operator is to the operational fractional power of the Laplacian, which can be rigorously defined as the operator with Fourier symbol $|\xi|^{2s}$.
This problem is often called \textit{Caffarelli-Vázquez problem} and is often written in the compact form
\begin{equation*}
	\label{eq:Caffarelli-Vazquez}
	\tag{CVE}
	\frac{\partial \rho}{\partial t} = \diver (\rho \nabla (-\Delta)^{-s} \rho ).
\end{equation*}
This is a ``diffusive'' operator. To solve the fractional Laplace equation we take $W = \mathrm F^{-1}[|\xi|^{-2s}]$, and we recover the so-called \textit{Riesz potential}
$$
	W_{\rm R} (x) = c_{d,s} 
	\begin{dcases}
		 |x|^{2s-d} & \text{if } 2s < d , \\
		\log|x| & \text{if } 2s = d
	\end{dcases}
$$
The properties convolution properties of the Riesz potential where already well known in Harmonic Analysis.
There has been a lot of analysis for this equation: asymptotic analysis \cite{CaffarelliVazquez2011b}, 
\added[id=O]{%
sign-changing case and self-similar solutions \cite{biler2015NonlocalPorousMedium},
}%
regularity theory \cite{CaffarelliSoriaVazquez2013,CaffarelliVazquez2015}, and in \cite{SerfatyVazquez2014} that the limit as $s \to 1$ is \eqref{eq:Newtonian vortex}. 
In \cite{lisini2018GradientFlowApproach} the authors construct a solution through the Wasserstein gradient-flow theory described in \Cref{sec:optimal transport}.

\subsection{McKean-Vlasov and Kuramoto problems} 
\label{sec:McKean-Vlasov}

The McKean-Vlasov process is given by the stochastic differential equation
$$
	\diff X_t = a_t \diff \mathcal B_t + b_t \diff t
$$
where $a_t, b_t$ may depend on $X_t$ and its law, and $\mathcal B_t$ is a Wiener process, i.e., we introduce \emph{Brownian motion}.
We can extend this idea to a system of $N$ particles driven by coupled McKean-Vlasov equations. 
If we consider that $a_t$ is constant, say $a_t = \sqrt{2 \beta^{-1}}$, and $b_t$ is given by the interaction we can arrive at
$$
	\diff X_t^{(i)} = -V(X_t^{(i)}) -\frac{1}{N}\sum_{\substack{j=1 \\ j \ne i}}^N \nabla W(X_t^{(i)}-X_t^{(j)}) \diff t + \sqrt{2 \beta^{-1}} \diff \mathcal B_t.
$$
Through arguments of mean-field theory (or \emph{propagation of chaos} see, e.g., \cite{Jabin2017}), one arrives asymptotically as $N \to \infty$ to \eqref{eq:ADE} with linear diffusion $U = \beta \rho \log \rho$ (see, e.g., \cite{carrillo2020LongTimeBehaviourPhase}).
This is the reason the problem
\begin{equation*}
	\tag{McKVE}
	\label{eq:McKean-Vlasov}
	\frac{\partial \rho}{\partial t} = \beta^{-1} \Delta \rho + \diver( \rho \nabla (V + W* \rho)  )
\end{equation*}
is sometimes called \emph{McKean-Vlasov problem} (see, e.g., \cite{carrillo2020LongTimeBehaviourPhase}).
However, this denotation is not universal (see, e.g., \cite{Carrillo+GC+Yao+Zheng2023}).

\bigskip

Y. Kuramoto \cite{Kuramoto1975} introduced a model for synchronisation of chemical and biological oscillators, and it has been well-received by the community in neuroscience. 
It is a particular case of McKean-Vlasov problem in $d = 1$ where 
$X_t^{(i)}$ models the phase of a coupled oscillator. Therefore, we take $X_t^{(i)} \in [0,2\pi)$ or, equivalently, periodic boundary condition. 
The particular choices are $V(x) = -\omega_i$ the natural frequency of the system, and $W(x) = \sin(x)$. The resulting \eqref{eq:ADE} is known as the \emph{Kuramoto model}
\begin{equation*}
	\tag{KE}
	\label{eq:Kuramoto}
	\begin{dcases} 
		\frac{\partial \rho}{\partial t} = \beta^{-1} \Delta \rho + \diver( \rho (-\omega + \sin * \rho)  ) & t > 0, x \in (0,2\pi), \\
		\rho_t \text{ is }  2\pi - \text{periodic}.
	\end{dcases} 
\end{equation*} 
The case $\beta^{-1} = 0$ is also interesting. An interesting analysis of this problem without diffusion (i.e., $\beta^{-1} = 0$) can be found in \cite{carrillo2014ContractivityTransportDistances}.

\subsection{The neural networks of machine learning}

A neural network is no more than a specific type of parametric function accepting an input vector $x \in \Rd$ and returning $y \in \R$. If the output is higher dimensional, then we can create a neural network for each output.
In this context ``training'' corresponds to optimising the parameters to fit a set of prescribed data (supervised learning) or to achieve some objective (unsupervised learning). 
Neural networks are form by connecting so-called \emph{perceptron}. A perceptron is a parametric function of the form
$$
	x \longmapsto \sigma( \theta_{1} \cdot x + \theta_{2} )
$$
where $\sigma$ is called the activation function and $\theta_{1} \in \Rd , \theta_{2} \in \mathbb R$.
A \emph{one-layer neural network} is the linear combination of the output of $N$ perceptrons, and can be expressed
$$
	f_N( x | w, \theta) \defeq \sum_{i=1}^N \frac{w_i}{N} \sigma( \theta_{1,i} \cdot x + \theta_{2,i} )
$$
where $w_i \in \R$, $\theta_{1,i} \in \Rd$, and $\theta_{2,i} \in \mathbb R$.
The aim is, for $N$ fixed, to find the best parameter to approximate a target function $f$, in the sense that
\begin{equation} 
	\label{eq:ml minimisation}
	\min_{(w, \theta)} \int_\Rd | f(x) - f_N( x | w, \theta)  |^2\diff x .
\end{equation}
The quadratic error can be replaced by more general functions.

\bigskip

In \cite{fernandez-real2022ContinuousFormulationShallow} the authors give a nice presentation of the following \emph{continuous formulation}, i.e., limit as $N \to \infty$.
When can let $\xi_i \defeq (w_i,\theta_{1,i},\theta_{2,i})$, $\Xi(\xi,x) \defeq w \sigma(\theta_{1,i} x + \theta_{2,i})$, and define the empirical distribution
\begin{equation}
	\label{eq:empirical distribution NN}
	\mu_N \defeq \frac 1 N \sum_{i=1} \delta_{\xi_i}.
\end{equation} 
For convenience, define $\Omega$ as the set of admissible $\xi$.
Then, the value of the neural network can be computed as
$$
	f_N( x | w, \theta) = \int_\Omega \Xi(\xi,x) \diff \mu_N(\xi). 
$$
We can rewrite the \eqref{eq:ml minimisation} to a minimisation problem over the set of empirical distributions given by \eqref{eq:empirical distribution NN}. The limit as $N \to \infty$ is now clear, we pass to the minimisation problem over $\mathcal P (\Omega)$.
To smoothen the problem the authors allow for a penalisation $\widehat V$ (of type $|\xi|^2/2$), which can be understood as trying to prevent \textit{overfitting}. We land on the problem
$$
	\min_{\mu \in \mathcal P (\Omega)} \frac  1 2 \int_\Rd \left| f(x) - \int_\Omega \Xi(x, \xi) \diff \mu (\xi) \right|^2\diff x + \int_\Omega \widehat V(\xi) \diff \mu(\xi)  .
$$
Expanding the square, this is precisely \eqref{eq:free energy star} for $U = 0$ and
\begin{align*} 
	K(x,y) &= \frac 1 2 \int_\Omega \Xi(x,z) \Xi(y,z) \diff z, \\
	V(x) &= -\int_\Omega \Xi(x,y) f(y) \diff y + \widehat V(x).
\end{align*} 
To find local minimums of this energy, it is natural over $\mathcal P(\Omega)$, it is natural to consider the associated $2$-Wasserstein gradient flow. As will show below, this leads to \eqref{eq:ADE Omega}.

\bigskip

In \cite{chizat2022InfinitewidthLimitDeep} the authors generalise this result to problems with more than one layer.

\subsection{Granular flow equation}
\added[id=R1, comment=This is a nice example. We thank the referee for the suggestion]{%
Another popular example of the \eqref{eq:ADE} family is the so-called granular flow equation.
This phenomenom is usually modelled in phase space $(t,x,v)$ where $v$ is the velocity.
In that model
$\rho(t,x,v)$ represents the phase-space distribution.
We arrive at \eqref{eq:ADE} if the $\rho(0,x,v)$ does not depend on $x$. 
In this setting, $U$ models the random interactions of the granules with their environment (a fluid or heat bath), $V$ models  friction, and $W$ models inelastic collisions between granules with different velocities.
For more details see \cite{villani2006MathematicsGranularMaterials,benedetto1998NonMaxwellianSteadyDistribution,CarrilloMcCannVillani2006}.
}%

\subsection{The power-type family of Aggregation-Diffusion Equations}

Many authors have devoted their attention to the power-type family of non-linearities given by $U = U_m$, 
$$  
	V_\lambda (x) = 
    \begin{dcases} 
        \frac{|x|^\lambda}{\lambda} & \text{if } \lambda \ne 0 \\
        \log|x| & \text{if } \lambda = 0 ,
    \end{dcases}, \qquad \qquad 
	W_k (x) = 
    \begin{dcases} 
        \frac{|x|^k}{k} & \text{if } k \in (-d,d) \setminus \{0\} \\
        \log|x| & \text{if } k = 0 ,
    \end{dcases}
$$
Notice that
$$
	W_{\rm N} = - C_{d} W_{2-d} \quad \text{ and } \quad W_{\rm R} = - C_{d,s} W_{2s-d} \qquad \text{ with } C_{d} > 0, C_{d,s} > 0
$$
This family covers most of the relevant examples above.
We introduce the associated free energies
\begin{equation}
    \label{eq:free energy parameters} 
    \calU_m[\rho] = \int_\Rd U(\rho), \qquad \mathcal V_\lambda[\rho] = \int_\Rd V_\lambda \rho, 
	\qquad
	\mathcal W_k[\rho] = \frac 1 2 \iint_\Rd W_k(x-y) \rho(x) \rho(y) \diff x \diff y.
\end{equation}

\section{Well-posedness frameworks}

	After setting a model like the ones described in \Cref{sec:examples}, and before any other analysis can take place, the mathematician immediately aims to solve the question of existence and uniqueness. 
For this, one needs a suitable framework in which to define what is a ``solution''. 

\bigskip 

The very first approach to a PDE is to consider the existence of so-called \emph{classical} solutions. These are solutions that are smooth enough so that all the derivatives in the equation can be taken, and the equation is satisfied is \emph{all} points in the domain.

\bigskip

The hope for classical solutions quickly vanishes for many singular problems (e.g., \eqref{eq:PME} when $m > 1$). To deal with this difficulty, many authors in 20th century dedicated their efforts to the notion of weak solutions, with their derivatives taken in \emph{distributional sense}. So the equation could simply be satisfied in \emph{almost all points}, and additional reasonable conditions.
We will make some comments on this in \Cref{sec:PDE approach}. 

\bigskip 

We devote
\Cref{sec:a priori estimates} to present some good and bad expected properties in different setting. In particular, we will also make a brief stop in \Cref{sec:stationary solutions} to discuss stationary solutions (i.e., solutions no evolving in time).
When not even weak solutions may exist (like in the transport equation), the community turned towards \emph{Optimal Transport} techniques, which we will discuss in \Cref{sec:optimal transport}. 
    \subsection{The PDE approach}
\label{sec:PDE approach}

First, we discuss the more classical approach to understand ``solutions'' of \eqref{eq:ADE}.

\subsubsection{Local PDEs}
Non-linear diffusion problems of the general type
\begin{equation}
	\label{eq:PDE}
	\frac{\partial \rho}{\partial t} = \Delta \Phi(\rho) + \diver ( \mathbf E(t,x, \rho(t,x)))
\end{equation}
were widely studied in the 20-th century. Through the work of many authors, different types of solutions have been studied: classical, weak, entropy, viscosity, …

\bigskip

Let us start by discussing the nicer problem given by non-degenerate parabolic problems.
When $\Phi, E$ are smooth and we assume $\Phi$ is uniformly elliptic, in the sense that there exist constants such that
\begin{equation}
\label{eq:Phi elliptic}
	0 < c_1 \le \Phi'(u) \le c_2 < \infty,
\end{equation}
existence, uniqueness, and maximum principle hold from the classical theory.
The literature is extensive:
in $\Rd$ this issue was solved at the beginning of the twentieth century (see \cite{ladyzhenskaia1968parabolic}). This book already contains several results for bounded domains (see \cite[Chapter V.6 and V.7]{ladyzhenskaia1968parabolic}). 
However, the development of regular solutions would leave room for later improvement. A good reference for Dirichlet boundary conditions is \cite{DiBenedetto1993b}. There is a series of works by \cite{Amann1990} for Neumann/Robin boundary conditions, although they provide very general ellipticity conditions which written for systems and are not easy to apply to specific case.
There are later references which are more concise, e.g., \cite{Yin2005}. 

\bigskip

The theory of weak solution is based on the notion of weak (or distributional) derivative, and hence solutions are found in Sobolev spaces. 
This is typical of parabolic problems where $\Phi \ne 0$. There are more general theory that allow for well-posedness even when $\Phi = 0$: entropy solutions (see \cite{Carrillo1999,KarlsenRisebro2003} and the references therein) and viscosity solutions (e.g., \cite{CrandallIshiiLions1992} and the references therein)

\bigskip 

The theory of weak and energy solutions for \eqref{eq:PME} is well presented in the book \cite{Vazquez2007}. Notice that for $m > 1$ we have $\Phi'(0) = 0$, and we say that the diffusion is degenerate, and for $m < 1$ we have $\Phi'(0) = \infty$, and we say the diffusion is singular. The first order problems $U = 0$ (or, equivalently, $\Phi = 0$) are purely of the first order and hence may yield discontinuous solutions.

\begin{remark}
	Occasionally, Hölder spaces are used in addition to Sobolev. For this, one can use the method of intrinsic scaling (see \cite{DiBenedetto1993b,urbano2008MethodIntrinsicScaling} and the references therein)
\end{remark}

\begin{remark}
	\label{rem:Crandall-Liggett}
	Purely-diffusive problems like \eqref{eq:PME} admit semigroup-type solutions. In the linear theory this is called Hille-Yosida theorem (see, e.g., \cite[Chapter 7]{Brezis2010}) and in the non-linear setting Crandall-Liggett theorem (see \cite{CrandallLiggett1971}). 
	The semigroup is constructed as the limit $\tau \to 0$
	of solutions of the implicit Euler scheme $u_0 = \rho$ and
	$$
		u_{k+1} + \tau (-\Delta u_{k+1}^m) = u_k. 
	$$
	To be precise, we take $t_k = k \tau$ and for $t \in [t_k, t_{k+1})$ then we define
	\begin{equation*}
		\rho^{(\tau)}_t = (1-\tfrac{t-t_k}{\tau}) u_k + \tfrac{t-t_k}{\tau} u_{k+1} .
	\end{equation*}
	
\end{remark} 

\begin{remark}
	Boundary conditions of homogeneous Dirichlet (i.e., $u = 0$) and Neumann (i.e., $\nabla u \cdot n = 0$) are fairly well understood in terms of existence and uniqueness (see, e.g., \cite{ladyzhenskaia1968parabolic}). 
	In linear problems, even so-called Robin type boundary conditions $\nabla u \cdot n + \alpha u= 0$.
	The operator $Au = -\Delta u - \diver(u \nabla V)$ with no-flux condition is monotone (i.e., $\langle Au, u \rangle \ge 0$) if $\| \nabla V \|_{L^\infty}$ is small enough, so well-posedness follows from the Hille-Yosida theory.
	Non-linear diffusion problems are more difficult.
	Works for \eqref{eq:Keller-Segel un-normalised} and related problem usually take advantage of an auxiliary boundary condition for $V_t$.

	When there is no diffusion (i.e., $U, W = 0$) and $V = -|x|^2$ leads to mass trying to exit any ball $B_r$. If work on a ball $B_R$ and we set no-flux condition $\rho \nabla V \cdot n = 0$ on $\partial B_R$, the result will be the formation of Dirac deltas on $\partial B_R$. This can be properly understood by the optimal transport theory below. It points to a subtle trade-of between diffusion and aggregation.

	Existence and uniqueness for \eqref{eq:ADE Omega} is only understood with \eqref{eq:BC Omega star} if we assume \eqref{eq:V and K boundary condition} (see \cite{Carrillo+Fernandez-Jimenez+GC2023} for $U = U_m$ with $m \in (0,1)$). The general case when \eqref{eq:V and K boundary condition} does not hold seems to be open. We provide a tricky example with no-diffusion in \Cref{eq:transport in a bounded domain}.
\end{remark} 

\subsubsection{Weak vs very weak solution}

If we work in $\Rd$ we can take $\varphi \in C_c^\infty ((0,T) \times \Rd)$, multiply the equation and integrate one to recover
$$
	-\int_0^\infty \int_\Rd \rho_t \frac{\partial \varphi}{\partial t} = - \int_0^\infty \int_\Rd \rho \nabla( U'(\rho) + V + W*\rho) \cdot \nabla \varphi
$$	
Functions $\rho$ that satisfy this equation are so-called \emph{weak solutions} of the problem.
Here, $U(\rho)$ must have a gradient in some sense. 
In $\Rd$ we can integrate by parts again in the diffusion term to recover
$$
	-\int_0^\infty \int_\Rd \rho_t \frac{\partial \varphi}{\partial t} = \int_0^\infty \int_\Rd \Phi(\rho) \Delta \varphi - \int_0^\infty \int_\Rd \rho \nabla( V + W*\rho) \cdot \nabla \varphi. 
$$	
These are the so-called \emph{very weak} solutions to the problem.

\bigskip 

If we work in a bounded domain $\Omega$ and consider the no-flux condition we can recover weak solutions. If we want to recover the no-flux condition from this weak formulation, we need to take $\varphi$ not vanishing in the boundary, for example $C_c^\infty((0,T); C^\infty (\overline \Omega))$.
To integrate by parts again we would need that $\partial_n \Phi (\rho) = 0$ on the boundary. This is only possible if we assume that $V$ and $K$ do not interfere with the boundary \eqref{eq:V and K boundary condition}.

\subsubsection{Solutions of \texorpdfstring{\eqref{eq:ADE Omega}}{(ADE*)} by approximation} 

We will not make any detailed analysis of Sobolev spaces let us make some comments that the equation would have if the solutions where smooth. 
We define $\Phi'(s) = s U''(s)$ and $\Phi(0) = 0$. 
To give a hint on the procedure:
\begin{enumerate}
	\item For uniformly elliptic $\Phi_\ee$, e.g.,
	$$\Phi'_\ee \sim \ee + (\Phi' \wedge \ee^{-1}) $$ 
	and $E_t(x)$ known, existence is done through the classical theory above.

	\item Letting $\ee \to 0$ yields solutions of the degenerate/singular diffusion term. This is the approach of \cite{Vazquez2007}.

	\item Take $V_\kappa, K_\kappa$ smooth approximations of $V$ and $K$. Then replace $E_t(x)$ by $\rho \nabla (V + \calK \rho)$ by a fixed-point argument.

	\item Pass to the limit in $\kappa$.
\end{enumerate}
The order of steps 2 and 3 can be suitable exchanged depending on convenience. They depend on delicate a priori estimates.
For two particular examples for global-in-time existence and uniqueness theory using this approach we point to \cite{CarrilloHittmeirVolzoneYao2019} for  $m > 1$, \cite{Carrillo+GC+Yao+Zheng2023} for $m = 1$, and \cite{Carrillo+Fernandez-Jimenez+GC2023} for $m \in (0,1)$ (the arguments in \cite{Vazquez2007} make $m > 1$ simpler).
For \eqref{eq:AE} we point the reader to \cite{lagoutiereVanishingViscosityLimit} for a vanishing viscosity argument.

\subsubsection{Push-forward solutions}
\label{sec:examples transport}

When the velocity field $v$ is known, the transport can be written as
\begin{equation*}
	\frac{\partial \rho}{\partial t} + \diver(\rho v_t ) = 0.
\end{equation*}
If $v$ is smooth problems, then can be solved by characteristics. 
This problem can also be recovered trying to find solutions as generalised characteristics  $\rho_t(X_t(y)) = A_t \rho_0(y)$.
This leads the set of ODEs
\begin{equation}
	\label{eq:transport characteristics}
    \begin{cases}
        \dfrac{\partial X_t}{\partial t} = v_t(X_t) & t > 0, \\
        X_0(y) = y.
    \end{cases} 
\end{equation}
If $v_t$ is Lipschitz in $x$, the field of characteristics is the unique solution of this problem. We do not concern ourselves with $A_t$ for now.

\bigskip 

A more elegant approach is to look for solutions of the Lagrangian formulation of the problem.
\begin{equation}
	\label{eq:lagragian formulation}
	 \int_{X_t(A)} \rho_t(x) \diff x =  \int_{A} \rho_0(x) \diff x.
\end{equation} 
This is equivalent to
\begin{equation*}
	\int_{\Rd} \rho_t(x) \varphi(x) \diff x = \int_\Rd \rho_0(x) \varphi(X_t(x)) \diff x.
\end{equation*}
For the details we recommend \cite[Lecture 16]{AmbrosioBrueSemola2021}.
This taps into a well-understood theory coming from the \textit{Optimal Transport} world (we will come back to this in \Cref{sec:optimal transport}).
Given a measure $\mu \in \mathcal P(\Omega)$ and a measurable map $T: \Omega \to \Omega$ we can define the push-forward of $\mu$ by $T$, which we denote $T_\sharp \mu \in \mathcal P(\Omega)$, as
$$
    T_\sharp \mu (B) \defeq \mu(T^{-1}(B)) \qquad \text{ for all } B \text{ measurable.}
$$
In particular, we can rewrite as \eqref{eq:lagragian formulation}
$$
	\rho_t = (X_t)_\sharp \rho_0.
$$
There are many properties of $\rho_t$ that be derived directly from this structure and the analysis of the ODEs \eqref{eq:transport characteristics}.

\subsubsection{Solutions by characteristics when \texorpdfstring{$U = 0$}{U = 0}}

If $U, W = 0$, then we can simply solve the de-coupled PDEs
$$
\begin{cases}
	\dfrac{\partial X_t}{\partial t} = -\nabla V(X_t) & t > 0, \\
	X_0(y) = y,
\end{cases} 
$$
and set $\rho_t = (X_t)_\# \rho_0$.
When $V$ is of class $C^2$, then a unique solution holds by the Picard-Lindelöf theorem. Furthermore, coming back to \Cref{sec:transport with known potential} if we have $\nabla V(x) \cdot x \ge 0$ then
$$
	\frac{\diff}{\diff t} | X_t | ^2 = - \nabla V(X_t) \cdot X_t \le 0
$$
so all the mass is moving towards $0$. 
Since $X_t = 0$ is a solution, when $V \in C^2$ mass will concentrate in infinite time.
A posteriori, for \eqref{eq:ADE} when $U = 0$ one can always define
$$
	v_t = \nabla (V + W*\rho)
$$
and arrive at a transport problem.

\bigskip

The problem \eqref{eq:AE} is more involved. The specific case \eqref{eq:CRSE} was done \cite{huang1998EvolutionMixedStateRegions}, where the author arrive at closed-form formulas for the characteristics.
The author of \cite{laurent2007LocalGlobalExistence} proves global existence of solutions when $W$ is good enough by a fixed-point argument with suitable a priori estimates in Sobolev spaces.
This result later applied to construct local-in-time solutions by approximation 
\added[id=R1]{%
	\cite{BertozziCarrilloLaurent2009,bertozzi2010FinitetimeBlowupWeaka,bertozzi2011LpTheoryMultidimensional}, 
}%
where a key difficulty is to then decide if there is global existence or finite-time blow-up.
Later works use characteristics to study blow-up behaviours (see, e.g., \cite{bertozzi2012CharacterizationRadiallySymmetrica%
}).
		\subsubsection{Stationary solutions}
\label{sec:stationary solutions}

There is a particular type of solutions which will be of interest for the asymptotics.
A \emph{stationary solution} is a solution that does not evolve in time. 
We recall the equivalent writing \eqref{eq:Wasserstein grad flow}.
If $\widehat \rho$ is a stationary solution, using $\varphi = \frac{\delta \calF}{\delta \rho} [\widehat \rho]$ as a test function yields
$$
    \int_\Omega \widehat \rho \left|\nabla \frac{\delta \calF}{\delta \rho} [\widehat \rho] \right|^2 = 0.
$$
Hence, we arrive at the equation for stationary states
\begin{equation*}
    \tag{ADE-S}
    \label{eq:ADE stationary state}
    \widehat \rho \, \nabla \frac{\delta \calF}{\delta \rho} [\widehat \rho] = 0, \qquad \text{a.e. in } \Omega.
\end{equation*}
This means that in open connected sets where $\hat \rho > 0$ then $\frac{\delta \calF}{\delta \rho} [\widehat \rho]$ is constant. 

Usually it is asked also that $\rho \in L^1 \cap L^\infty$ that $\Phi(\rho) \in W^{1,1}_{loc}$ (where $\Phi'(s) = s U'(s)$ and $\Phi(0) = 0$) and that $\nabla V, \nabla W * \rho \in L^1_{loc}$. This regularity is not always achievable.

\begin{remark}
    When they are available, these kinds of solutions will typically be recovered from a minimisation problem. We will discuss particular examples in \Cref{sec:existence of minimisers}.
\end{remark}

\begin{remark}
    Notice that solving this problem when $U = U_m$ and $V = V_2$ the Barenblatt profile \eqref{eq:Barenblatt profile} (i.e., \eqref{eq:Barenblatt} in re-scaled variables) is a solution of \eqref{eq:ADE stationary state}.
\end{remark}

\subsection{A priori estimates for smooth solutions}
\label{sec:a priori estimates}

In this section we make some formal computations to understand properties of the solution of \eqref{eq:ADE Omega}. We will mostly assume that $\rho$ is smooth. However, due to weak lower semi-continuity of norms, the a priori estimates still usually hold after approximation.

\subsubsection{Conservation of mass}
\label{sec:conservation of mass}
Let us consider \eqref{eq:ADE Omega}, and take a smooth set $\omega \subset \Rd$. We can formally compute the balance of mass 
$$
	\frac{\diff }{\diff t} \int_\omega  \rho \diff x = 	 \int_\omega \frac{\partial \rho}{\partial t} = \int_\omega \diver \left( \rho \nabla (U'(\rho) + V + \mathcal K \rho) \right)\diff x = -\int_{\partial \omega} \rho \nabla (U'(\rho) + V + \mathcal K \rho) \cdot n \diff S.
$$
Hence, if we have the boundary conditions \eqref{eq:BC Omega} we have conservation of mass. 
In $\Rd$ we will need that the decay as $|x| \to \infty$ is fast enough.
This is not always the case, as shown by the next result (see \cite{BenilanCrandall1981}).

\begin{proposition}
	\label{prop:PME finite-time extinction}
	Let $\rho$ be a solution of \eqref{eq:PME} in $\Rd$ with $m \in (0, \frac{d-2}{d})$ and $d \ge 3$. Then
	\begin{enumerate}
		\item If $\rho_0 \in L^{q} (\Rd)$ with $q = \frac{(1-m)d}{2}$ then there is finite-time extinction.
		\item If $\rho_0 \in L^1 (\Rd)$ then there is (at least), infinite-time extinction.
	\end{enumerate}
\end{proposition}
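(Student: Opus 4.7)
The plan is to derive a closed differential inequality for $y(t)\defeq\int_{\Rd}\rho^{q}\diff x$ for part~(1), and to reduce part~(2) to part~(1) via the $L^{1}$-contraction property of the porous medium semigroup.

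\textbf{Part (1).} Using $\rho^{q-1}$ as a test function in \eqref{eq:PME} (formally, to be justified by the standard approximation scheme of \cite{Vazquez2007}) and integrating by parts yields
\begin{equation*}
\frac{1}{q}\frac{d}{dt} y(t) \;=\; -\frac{4(q-1)m}{(q+m-1)^{2}}\int_{\Rd}\bigl|\nabla\rho^{(q+m-1)/2}\bigr|^{2}\diff x .
\end{equation*}
The choice $q=(1-m)d/2$ is \emph{precisely critical} for the Sobolev embedding $H^{1}(\Rd)\hookrightarrow L^{2d/(d-2)}(\Rd)$: writing $u=\rho^{(q+m-1)/2}$, an exponent check gives $u^{2d/(d-2)}=\rho^{q}$, so
\begin{equation*}
y(t)^{(d-2)/d} \;\le\; S\int_{\Rd}|\nabla u|^{2}\diff x .
\end{equation*}
The assumption $m<(d-2)/d$ is equivalent to $q>1$, so the prefactor in the first display is strictly negative, and combining the two displays one obtains $y'(t)\le -C\, y(t)^{(d-2)/d}$ with an explicit $C>0$. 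Since $(d-2)/d<1$ for $d\ge 3$, integrating gives $y(t)^{2/d}\le y(0)^{2/d}-\tfrac{2C}{d}t$, which forces $y(T^{*})=0$ at $T^{*}=\tfrac{d}{2C}\|\rho_{0}\|_{L^{q}}^{1-m}$. Since $\rho\equiv 0$ is also a solution, uniqueness yields extinction for all $t\ge T^{*}$.

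\textbf{Part (2).} For $\rho_{0}\in L^{1}(\Rd)$ and $\varepsilon>0$, split $\rho_{0}=\rho_{0}^{\varepsilon}+r_{0}^{\varepsilon}$ with $\rho_{0}^{\varepsilon}\defeq\rho_{0}\wedge k$ for $k$ large enough that $\|r_{0}^{\varepsilon}\|_{L^{1}}<\varepsilon$; then $\rho_{0}^{\varepsilon}\in L^{1}\cap L^{\infty}\subset L^{q}(\Rd)$. Letting $\rho^{\varepsilon}$ be the solution issuing from $\rho_{0}^{\varepsilon}$, part~(1) produces $T^{*}_{\varepsilon}$ after which $\rho^{\varepsilon}\equiv 0$. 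The standard $L^{1}$-contraction of the porous medium semigroup (cf.~\Cref{rem:Crandall-Liggett} and \cite{BenilanCrandall1981,Vazquez2007}) then gives
\begin{equation*}
\|\rho(t)\|_{L^{1}} \;\le\; \|\rho(t)-\rho^{\varepsilon}(t)\|_{L^{1}}+\|\rho^{\varepsilon}(t)\|_{L^{1}} \;\le\; \|r_{0}^{\varepsilon}\|_{L^{1}}<\varepsilon
\end{equation*}
for every $t\ge T^{*}_{\varepsilon}$, so $\|\rho(t)\|_{L^{1}}\to 0$ as $t\to\infty$.

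\textbf{Main obstacle.} The genuine work is justifying the a priori estimate in part~(1): one must regularise the equation (e.g.\ replacing $\rho^{m}$ by $(\rho+\varepsilon)^{m}-\varepsilon^{m}$), control the decay at infinity so that no boundary terms appear in the integration by parts, and then pass to the limit using lower semicontinuity of the $L^{q}$-norm. Once the differential inequality for $y(t)$ is secured, the ODE analysis is elementary and part~(2) becomes a one-line consequence of $L^{1}$-contraction plus part~(1).
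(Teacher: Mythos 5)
Your proof is correct and follows essentially the same route as the paper's: test the equation with $\rho^{q-1}$, apply Sobolev to close a differential inequality $y'\le -Cy^{(d-2)/d}$ for the critical $L^q$-norm, and deduce infinite-time extinction for $L^1$ data by $L^1$-contraction plus density. In fact your version is slightly more careful in the exponent bookkeeping: the exponent inside the gradient should indeed be $(q+m-1)/2$ (the paper's display writes $(m+q)/2$, a typographical slip), and the Sobolev step yields $y^{2/2^*}=y^{(d-2)/d}$ on the right-hand side (the paper writes the power $1/2^*$), but since both exponents are $<1$ the ODE conclusion is unaffected.
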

\begin{proof}  First let $u_0 \in L^q$. Then, we compute for sufficiently good solutions that
\begin{align}
	\label{eq:BenilanCrandall finite time extinction}
	\frac{\diff}{\diff t} \frac 1 q \int_\Rd \rho^q 
		&= - C \int_\Rd  | \nabla \rho^{\frac{m+q} 2 }|^2 
		\le 
		-C { \left( \int_\Rd \rho^{\frac{m+q}{2}2^*} \right)^{\frac{1}{2^*}} }
\end{align} 
using Sobolev's inequality
where $2^* = \frac 1 2 - \frac 1 d$. 
The equation  $\frac{\diff }{\diff t} X =  - C X^{\alpha}$ where $\alpha < 1$ has finite-time extinction in the sense $X_T = 0$. Thus, we have the finite-time extinction
\begin{equation*}
	\int_\Rd \rho^q \to 0, \qquad t \nearrow T.
\end{equation*}

Let $\rho_0 \in L^1(\Rd)$. Take $\ee > 0$ and an approximating sequence $\rho^{(\ee)}_0 \in L^q$ with $\| \rho_0 - \rho_0^{(\ee)} \|_{L^1} \le \ee$. For $t \ge T_\ee^*$
\begin{align*}
	\|\rho(t) \|_{L^1} 
		&
		\le \| \rho(t) - \rho^{(\ee)}(t) \| _{L^1}  + \|\rho^{(\ee)} (t) \|_{L^1}  \\ 
		&\le \| \rho_0 - \rho^{(\ee)}_0 \|_{L^1} + 0 \\
		&\le \ee. \qedhere
\end{align*} 
\end{proof} 

This effect is known as ``loss of mass through infinity'', and is typical to the \textit{very fast} diffusion. In some settings, this can be offset by the aggregation term. To prevent this loss of mass it suffices that $\rho_t$ is a \textit{tight} family (a notion we discuss in the next section).

\subsubsection{Mass escaping through infinity: tightness}

In the previous section we have discussed the problem of mass escaping through infinity ``as time passes''. This phenomenon is also related to a difficult technical problem when working on $\Prob (\Rd)$. We recall some classical results for working with probability measures.

First, let $\rho_n$ be a sequence of functions in $\Prob(\Rd) \cap L^1 (\Rd)$. If they have an a.e. limit we know, at most, that 
$$
	\int_\Rd \hat \rho \le \liminf_n \int_\Rd \rho_n = 1.
$$
But the equality needs to hold.
Furthermore, let us take a sequence in $\mu_n \in \Prob(\Rd)$. Since they are uniformly bounded measures, and $\mathcal M (\Rd)$ is the dual of $C_b(\Rd)$ by the Banach-Alaoglu theorem we have that the sequence $\mu_n$ is weak-$\star$ pre-compact, and take $\widehat \mu$ an accumulation point. It needs not lie in $\Prob(\Rd)$.

To ensure that the limit is a measure we need to add some information. Prokhorov's theorem (see, e.g., \cite[Theorem 5.1.3]{AmbrosioGigliSavare2005}) states that a family of measures is pre-compact in $\Prob (\Rd)$ if and only it is \emph{tight}.
\begin{definition}
	A family of probability measures $(\mu_a)_{a \in A}$ is tight if, 
	for every $\ee > 0$ there exists $K_\ee \subset \Rd $ compact such that 
	$$
		\mu_a (\Rd \setminus K_\ee) < \ee, \qquad \forall a \in A. 
	$$
\end{definition}

This means, informally speaking, that the \emph{tails} of $\mu_n$ hold ``uniformly small'' mass. 
\begin{remark}
	\label{eq:tightness by test function}
	To show that a family of probability measures $(\mu_a)_{a\in A}$ is tight it suffices to show that it is uniformly integrable against a suitable function.
	Following \cite[Remark 5.1.5]{AmbrosioGigliSavare2005}, it suffices to
	find $\varphi:\Rd \to [0,\infty]$
	such that its sublevel sets (i.e., $\{ x \in \Rd : \varphi (x) \le c  \}$ for $c \in \mathbb R$) are compact, and we have
	\begin{equation}
		\label{eq:tightness integral condition} 
		\sup_{a \in A} \int_\Rd \varphi(x) \diff \mu_a(x) < \infty .
	\end{equation} 
\end{remark}

Very typical examples is $\varphi (x) = |x|^p$ with $p \ge 1$. 
The quantity $\int_\Rd |x|^p \diff \mu$ is called the $p$-th order moment of $\mu$.

\bigskip 

It is clear that \eqref{eq:tightness integral condition} can follow for minimisers \eqref{eq:free energy} if $V$ is good enough. We point out the following estimate that allows to use $\calW$ for the same purpose.
\begin{lemma}[\cite{McCann1997,CarrilloCastorinaVolzone2015}]
	\label{lem:W controls tails}
	Let $\rho$ be radially symmetric, $\Omega = \Rd$, $w$ non-decreasing such that $w(r_0) \ge 0$ for some $r_0$, and let $r_1 > r_0$. Then
	$$
		\int_{|x| \ge r_1} w(|x|) \rho(x) dx \le  \frac{ \iint_{\Rd \times \Rd} | w(|x-y|) | \rho(x) \rho(y) \diff x \diff y  }{ \frac 1 2 \int_{\Rd} \rho(x) \diff x }   
	$$
\end{lemma}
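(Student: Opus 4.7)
The approach is to reduce the claim to a pointwise inner estimate by exploiting the radial symmetry of $\rho$. Multiplying both sides by $\tfrac 1 2 \int_{\Rd}\rho$, the desired inequality is equivalent to
\[
\tfrac 1 2 \Big(\int_{\Rd}\rho\Big)\int_{|x|\ge r_1} w(|x|)\rho(x)\diff x \;\le\; \iint_{\Rd\times\Rd} |w(|x-y|)|\,\rho(x)\rho(y)\diff x\diff y.
\]
For this it suffices to prove, for every $x$ with $|x|\ge r_1$, the inner bound
\[
\int_{\Rd} |w(|x-y|)|\,\rho(y)\diff y \;\ge\; \tfrac 1 2 w(|x|)\int_{\Rd}\rho(y)\diff y.
\]
Once this is available, multiplying by $\rho(x)$, integrating over $\{|x|\ge r_1\}$, and using that the right-hand side only grows when $\{|x|<r_1\}$ is added back yields the lemma.

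To establish the inner bound, I would symmetrize using $\rho(-y)=\rho(y)$: the change of variables $y\mapsto -y$ gives $\int |w(|x-y|)|\rho(y)\diff y = \int |w(|x+y|)|\rho(y)\diff y$, so
\[
\int |w(|x-y|)|\,\rho(y)\diff y \;=\; \tfrac 1 2 \int \big[\,|w(|x-y|)| + |w(|x+y|)|\,\big]\rho(y)\diff y.
\]
It therefore suffices to prove the pointwise inequality $|w(|x-y|)| + |w(|x+y|)| \ge w(|x|)$ whenever $|x|\ge r_1$ and $y\in\Rd$. For this, note that $(x-y)+(x+y)=2x$, so by the triangle inequality $|x-y|+|x+y|\ge 2|x|$, and thus $\max(|x-y|,|x+y|)\ge |x|\ge r_1 > r_0$. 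Supposing without loss of generality that $|x-y|\ge |x|$, monotonicity of $w$ and $w(r_0)\ge 0$ give $w(|x-y|)\ge w(|x|)\ge 0$, hence $|w(|x-y|)|=w(|x-y|)\ge w(|x|)$, and the pointwise bound follows.

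The argument is genuinely short, and the only delicate point is the presence of $|w(\cdot)|$ (rather than $w$) on the right-hand side. This is precisely what makes the symmetrization robust: $w$ may be negative on $[0,r_0)$, but those negative contributions only help once the absolute value is taken, so the pointwise comparison only needs the control of $w$ on the member of the pair $\{|x-y|,|x+y|\}$ that is guaranteed to lie past $r_0$. No sign assumption on $w$ below $r_0$ is used, which is what makes this estimate applicable to interaction kernels $W$ that are not themselves sign-definite.
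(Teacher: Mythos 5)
Your proof is correct and closes the same way the paper's does, but the decomposition is genuinely different. The paper restricts the $y$-integration to the half-space $\{y: x\cdot y\le 0\}$, where $|x-y|^2 = |x|^2 + |y|^2 - 2x\cdot y \ge |x|^2$ automatically, and then uses that by radial symmetry $\int_{x\cdot y\le 0}\rho = \frac12\int_\Rd\rho$ for every $x$. You instead symmetrize the kernel over the pair $\{y,-y\}$ by the change of variables $y\mapsto -y$ (which only uses $\rho(-y)=\rho(y)$) and prove the pointwise inequality $|w(|x-y|)|+|w(|x+y|)|\ge w(|x|)$ from the parallelogram observation $|x-y|+|x+y|\ge 2|x|$. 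The two routes are dual: restricting your pointwise bound to the half-space and dropping the $|x+y|$ term recovers the paper's inner estimate, while the paper's half-space restriction effectively selects, for each pair $\{y,-y\}$, the member landing in the ``favourable'' region. What your version buys is a clean pointwise-in-$x$ intermediate statement for $\int_\Rd|w(|x-y|)|\rho(y)\,\diff y$, an explicit bookkeeping of exactly where $|w|$ (rather than $w$) is needed, and the observation that only the evenness of $\rho$ enters. Both proofs consume the hypothesis $r_1>r_0$ in the same place: to guarantee $w(|x|)\ge w(r_0)\ge 0$ on the relevant range so the absolute value can be dropped on the dominant branch.
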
 
\begin{proof}
	Taking into account the geometry shown in \Cref{fig:W controls tails} we can estimate 
	\begin{align*}
		\int_{|x| \ge r_1} &\int_{|x-y| \ge r_0} w(|x-y|) \rho(x) \rho(y) \diff y \diff x  \\
		&\ge \int_{|x| \ge r_1} \int_{y \cdot x \le 0} w(|x-y|) \rho(x) \rho(y) \diff y \diff x 
	\end{align*} 
	\begin{figure}[H]
		\centering
		\includegraphics[width=.5\textwidth]{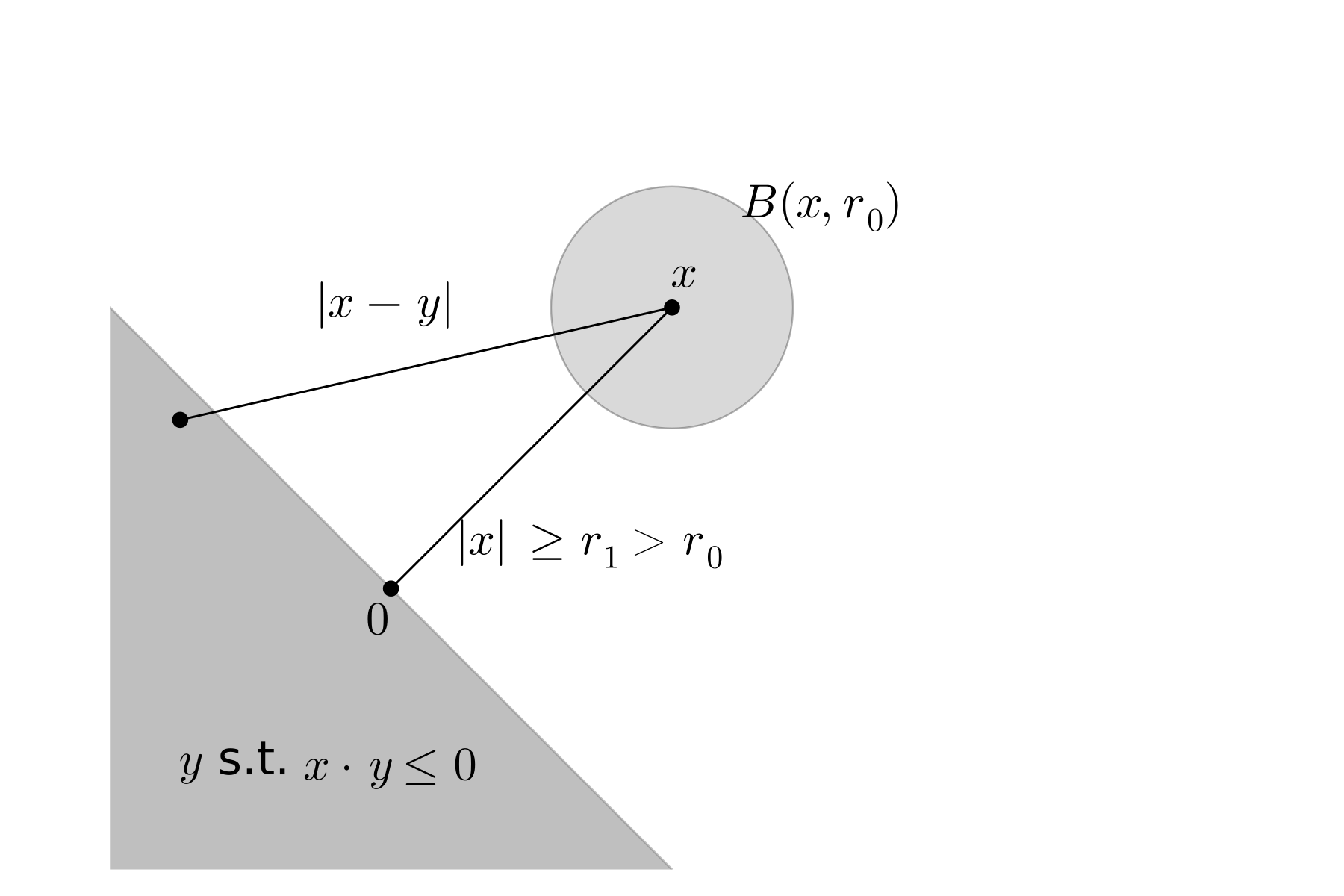}
		\caption{Sets used of the proof of \Cref{lem:W controls tails}}
		\label{fig:W controls tails}
	\end{figure} 
	Since $x \cdot y \le 0$ then $|x-y|^2 = |x|^2 + |y|^2 - 2 x \cdot y \ge |x|^2$ and since $w$ is non-decreasing
	\begin{align*} 
		&\ge \int_{|x| \ge r_1} \int_{y \cdot x \le 0} w(|x|) \rho(x) \rho(y) \diff y \diff x  \\
		&\ge \int_{|x| \ge r_1} w(|x|) \rho(x) \int_{y \cdot x \le 0}  \rho(y) \diff y \diff x.
	\end{align*}
	Since $\rho$ is radially symmetric, for any $x \in \Rd$ we have 
	\begin{equation*}
		\int_{y \cdot x \le 0}  \rho(y) \diff y = \frac 1 2 \int_{\Rd}  \rho(y) \diff y. \qedhere
	\end{equation*}
\end{proof}

\subsubsection{An \texorpdfstring{$L^p$}{Lp} estimate}
\label{sec:Lp estimate}

We can proceed similarly to the approach in \eqref{eq:BenilanCrandall finite time extinction} in more general settings.
If we drop the term coming from the diffusion, then we get
\begin{align*}
	\frac{\diff}{\diff t} \frac{1}{p} \int_\omega \rho^p &= \int_{\omega} \rho^{p-1} \frac{\partial \rho}{\partial t} \\
	&= \int_{\omega} \rho^{p-1} \diver \rho \nabla \frac{\delta \calF}{\delta \rho}\\
	&= -(p-1)\int_{\omega} \rho^{p-1} \nabla \rho \nabla (U'(\rho) + V + \calK \rho ) + \int_{\partial \omega } \rho^p \nabla \frac{\delta \calF}{\delta \rho} \cdot n \\
	&\le - \frac{p-1}{p} \int_\omega \nabla {\rho_t^{p}} \nabla (V+ \calK \rho) + \int_{\partial \omega } \rho^p \nabla \frac{\delta \calF}{\delta \rho} \cdot n 
\end{align*}
Integrating by parts one final time leads to
\begin{equation}
	\frac{\diff}{\diff t} \frac{1}{p} \int_\omega \rho^p \le \frac{p-1}p \int_\omega \rho^p \Delta (V+\calK\rho) - \frac{p-1}p \int_{\partial \omega} \rho^p \nabla (V+\calK\rho) \cdot n(x) +  \int_{\partial \omega } \rho^p \nabla \frac{\delta \calF}{\delta \rho} \cdot n.
\end{equation}
If we work in $\Rd$ with $V, W \in W^{2,\infty}$, we assume good decay of $\rho^p \nabla (V+\calK\rho)$ and $\rho^p \nabla \frac{\delta \calF}{\delta \rho}$ then the boundary term vanish and we can write an inequality for equation of the form
$$
	\| \rho_t \|_{L^p(\Rd)} \le \| \rho_0 \|_{L^p(\Rd)} e^{Ct}.
$$

However, when we work in $\Omega$ with \eqref{eq:BC Omega star} and we take $\omega = \Omega$, then the last term vanishes. However, we cannot relate the boundary term $\rho^p \nabla (V+\calK\rho) \cdot n(x) $ to the $L^p$ norm of $\rho$. In some cases it can be compensated by the negative diffusion term, but not in general. This is one of the reasons we assume \eqref{eq:V and K boundary condition} on the boundary.

\subsubsection{Free-energy dissipation}
\label{sec:free energy dissipation}

\begin{definition}[First variation]
	Let $X$ be a normed space, $\calF : X \to \mathbb R$ and fix $\rho_0 \in X$. We say $\calF$ it is Gateaux differentiable at $X$ if for all $\varphi \in X$ there exists  
	$$
		\diff \calF [\rho_0, \varphi] \defeq \lim_{\ee \to 0} \frac{\calF [\rho_0 + \ee \varphi] - \calF[\rho_0]}{\ee}.
	$$
	The function $\diff \calF$ is usually called \emph{first variation of $\calF$}. 
	Some authors extend this definition and require just that the limit exists for $\varphi$ in a dense subset of $X$.
	Functionals that admit first variation are called \emph{Gateaux differentiable}.
\end{definition}
When $X \subset L^1 (\Omega)$, the first variation can sometimes be represented by a function $f_0 \in L^1(\Omega)$ the sense that
$$
	\int_\Omega f_0(x) \varphi(x) \diff x = \diff \calF [\rho_0, \varphi], \qquad \forall \varphi \in L^\infty(\Omega).
$$
If this is the case, we denote $\frac{\delta F}{\delta \rho}[\rho_0] \defeq f_0$.
It is easy to check formally that, when $\mathcal F$ is given by \eqref{eq:free energy star} and $K(x,y) = K(y,x)$, then \eqref{eq:first variation}.
In many of the examples the first variation can be rigorously computed. 

\bigskip

If $K(x,y) = K(y,x)$ then the solutions of \eqref{eq:ADE Omega}
admits \eqref{eq:free energy star} as Lyapunov functional.
This functional is usually called \textit{free energy}. For smooth solutions of \eqref{eq:ADE Omega} (with sufficiently good decay as $|x| \to \infty$ in unbounded domains) we can compute
\begin{equation}
	\label{eq:free energy dissipation}
	\frac{\diff}{\diff t} \mathcal F[\rho] = - \int_\Omega \rho \left|  \nabla \frac{\delta \mathcal F}{\delta \rho}    \right|^2 + \int_{\partial \Omega}  \rho   \nabla \frac{\delta \mathcal F}{\delta \rho} \cdot n .
\end{equation}
When $\Omega = \Rd$ (with sufficiently good decay of $\rho \nabla \frac{\delta \mathcal F}{\delta \rho}[\rho_t]$) or we have the no-flux condition \eqref{eq:BC Omega star}, we recover the decay of the free energy, i.e.,
along solutions of \eqref{eq:ADE Omega} we have decay of  free-energy dissipation
\begin{equation}
	\label{eq:free-energy dissipation}
	\mathcal F[\rho_t] - \mathcal F[\rho_s] = -\int_s^t \int_\Omega \rho \left| \nabla \frac{\delta \mathcal F}{\delta \rho} [\rho] \right|^2 .
\end{equation}
In particular if $s < t$ then $\calF [\rho_s] > \calF [\rho_t]$.

\begin{remark}
	Often the free energy dissipation allows us to prove tightness.
	One approach is to recall \Cref{eq:tightness by test function} and notice that free energy dissipation includes a term $\int_\Omega V \rho$. 
	The term $\iint W(x-y) \rho(x) \rho(y)$ can also be useful due to \Cref{lem:W controls tails}.
\end{remark}

\subsubsection{Negative Sobolev spaces}

Given a set $\Omega$, the negative Sobolev spaces can be defined by distributions with finite norm
$$
	\| f \|_{W^{-1,1} (\Omega)} =  \inf_{   f = \diver F  }	\|F \|_{L^1 (\Omega)}
$$
Using the free-energy dissipation
\begin{align*} 
	\Big \|\frac{\partial \rho_t}{\partial t} &  \Big\|_{W^{-1,1} (\Omega)} =  \inf_{ \frac{\partial \rho_t}{\partial t} = \diver F     }	\|F \|_{L^1 (\Omega)}
	\le \|\rho \nabla \tfrac{\delta \mathcal  F}{\delta \rho} \|_{L^1} \\
	&\le \|\rho_t\|_{L^1}^{\frac 1 2} \left(   \int_\Omega \rho_t \left|  \nabla \tfrac{\delta \mathcal F}{\delta \rho}  \right|^2 \right)^{\frac 1 2} 
		= \|\rho_t\|_{L^1}^{\frac 1 2} \left( -  \frac{d}{dt} \mathcal F[\rho_t] \right)^{\frac 1 2}
\end{align*} 
Also $\|\rho_t \|_{L^1} \le \| \rho_0 \|_{L^1}$. 
In the smooth setting, we can integrate in time
$$
	\|\rho_t - \rho_s \|_{W^{-1,1}} \le C |t-s|^{\frac 1 2}  (\mathcal F[\rho_s] - \mathcal F[\rho_t])^{\frac 1 2}.
$$
Since $L^1$ is compactly embedded in $W^{-1,1}$, when $\calF$ is bounded below, the sequence
$$
	\rho^{[n]}_t = \rho_{n+t}
$$
is relatively compact in $C([0,1]; W^{-1,1}(B_R))$ due to Ascoli-Arzelá theorem. 
Let $\widehat \rho \in W^{-1,1}$ be the limit of a sub-sequence. Since $\mathcal F[\rho_t]$ is bounded below and non-increasing, there is a limit $\widehat \calF$. Then
$$
	\| \widehat \rho_t - \widehat \rho_s \|_{W^{-1,1}} \le C|t-s|^{\frac 1 2} (\widehat \calF - \widehat \calF) = 0.
$$
So $\widehat \mu$ does not depend on $t$. In the cases where one can prove stronger convergences, it can be shown that it is a stationary solution, i.e., 
\begin{equation} 
	\label{eq:ADE stationary eq}
	\diver\left(\widehat \rho \nabla \frac{\delta \mathcal F}{\delta \rho}[\widehat \rho]\right) = 0.
\end{equation}

    \subsection{Optimal-transport approach: Wasserstein spaces and the JKO scheme}
\label{sec:optimal transport}

As we already mentioned in \Cref{sec:examples transport}, there is a clear connection between some of our equations and \emph{Optimal Transport} problems.
Otto realised in \cite{Otto1996} (see also \cite{Otto2001}) that the \eqref{eq:PME} in $\Rd$ corresponds, in fact, to the gradient flow of a $\mathcal F$ with respect to the 2-Wasserstein distance. This idea led to the so-called ``Otto calculus'' that was later perfected.

\bigskip

The following pages are meant as an extremely brief introduction. For the reader interested in deepening their understanding of these spaces and their connection to PDEs we suggest reading 
the lecture notes \cite{AmbrosioBrueSemola2021} first, and then the very detailed books \cite{AmbrosioGigliSavare2005,Villani2009}.
A very nice presentation with an emphasis on the examples can be found in \cite{Santambrogio2015}.

\subsubsection{The Wasserstein distances}

The Wasserstein metrics were a tool already used by people studying optimal transport between probability measures. 
We give a brief definition. Let $X$ be a metric space with a Borel algebra, and recall the definition of push-forward given in \Cref{sec:examples transport}.
If we are given $\mu, \nu \in \mathcal P(X)$, a transport map is a measurable function $T$ such that $\nu = T_\sharp \mu$. 
We can formally think about the $p$-Wasserstein distance between probability distributions are defined for $p \ge 1$ as
\begin{equation} 
    \label{eq:Monge problem}
    \left( \inf_{T:\nu = T_\sharp \mu} \int_{X} |x - T(x)|^p \diff \mu(x) \right)^{\frac 1 p}
\end{equation} 
This is the so-called Monge optimisation problem. The infimum is not always achieved, and sometimes no valid $T$ exists. Kantorovich improved this idea by introducing transport plans.
A transport plan is a probability distribution in $\mathcal P(X \times X)$ that have $\mu, \nu$, i.e.,
$$
    \mu(A) = \pi(A \times X) , \qquad \nu(A)= \pi(X \times A), \qquad \text{ for all } A \subset X \text{ measurable}.
$$
This set is denoted by $\Pi(\mu,\nu)$, and is never empty because of the measure $\mu \otimes \nu \in \Pi(\mu,\nu)$, the unique measure such that
$$
    (\mu \otimes \nu) (A \times B) = \mu(A) \nu (B).
$$
The \emph{Wasserstein distances} are defined as
\begin{equation}
    \label{eq:Wasserstein}
    \Wass_p(\mu,\nu) \defeq \left( \min_{\pi \in \Pi(\mu, \nu)}  \iint_{X \times X} |x-y|^p \diff \pi (x,y)   \right)^{\frac 1 p} .
\end{equation}
These distances are sometimes called also \emph{Kantorovich-Rubinstein distance}. 
The distance between two measures in $\mathcal P (X)$ can be infinite, unless the finite $p$-th order moment is finite. Hence, we define the $p$-Wasserstein space
$$
    \mathcal P_p (X) \defeq \left\{ \mu \in \mathcal P(X) : \int_{X} |x|^p \diff \mu (x) < \infty   \right\}.
$$
The pair $(\mathcal P_p (X), \Wass_p)$ is a metric space. If $X$ is complete, then so this is space is also complete and it is equivalent to the narrow convergence (see \cite[Proposition 7.1.5]{AmbrosioGigliSavare2005}). This is why we pick $X = \overline \Omega$. 

As it usually happens in these families of spaces, there are three highlighted cases: $p = 1,2, \infty$. 
The case $p = 2$ will appear in the next section, and it is the one directly related to \eqref{eq:ADE}.
The advantage of $p = 1$ is the so-called Kantorovich-Rubinstein duality
\begin{equation}
    \label{eq:Kantorovich-Rubinstein}
    \Wass_1 (\mu, \nu) = \sup \left\{  \int \psi \diff \mu - \int \psi \diff \nu : \text{for } \psi \text{ such that } \sup_{x \ne y}\frac{|\psi(x) - \psi(y)|}{|x-y|} \le 1    \right\}.
\end{equation}
A very interesting property of the Wasserstein distance is that it admits a dynamic characterisation through the so-called \emph{Benamou-Brenier formula} which is stated easiest in $\Rd$
\begin{equation}
    \label{eq:Benamou-Brenier}
    \Wass_2 (\mu_0,\mu_1) = 
    \inf_{
            (\mu,v) \in 
            \added[id=R1]{\mathcal A(\mu_0,\mu_1)}
    }
    \int_0^1 \int_\Rd |v_t|^2 \diff \mu_t \diff t,
\end{equation}
\added[id=R1, comment=the domain of the minimisation has been separated to another equation for better clarity]{%
    where 
    \begin{align*}
        \mathcal A (\mu_0, \mu_1) = \Big\{  (\mu, \nu) & : 
        \mu  \in C( [0,1], \mathcal P_2 (\Rd) ),  v \in L^2([0,1] \times \mathbb R^d,\mu)^d, \\ 
        & \qquad \mu(0) = \mu_0, \mu(1) = \mu_1
         \text{, and }
        \frac{\partial \mu_t}{\partial t} + \diver(\mu_t v_t) = 0 \Big\}
    \end{align*}
    is set space of admissible paths between $\mu_0$ and $\mu_1$.
}%

\subsubsection{Otto's calculus} 
The notion of calculus in these spaces is a little tricky, since  $\mathcal P_p(\Rd)$ are not vector spaces. However, they are subsets of the linear space of measures. 
In fact, it can be though of as a manifold: we can define tangent through curves. In particular, $\rho_0 \in \mathcal P_2(\Rd)$ and take $\varphi \in C_c^\infty(\Rd)$ a test function. Consider the curve of probabilities
$$
    \rho_t = ( \mathrm{id} + t \nabla \varphi )_\sharp \rho_0.
$$
It is not too difficult to see \cite[Lecture 16]{AmbrosioBrueSemola2021} that $\rho_t$ is a distributional solution of the continuity equation
$$
    \frac{\partial \rho_t}{\partial t} + \diver(  \rho_t v_t  ) = 0 
$$
where $v_t = \nabla \varphi \circ ( \mathrm{id} + t \nabla \varphi )^{-1}$. Then formally the tangent space $T_{\rho_0} \mathcal P_2(\Rd)$ is made of
$$
    s \defeq \frac{\diff}{\diff t}\Big|_{t=0} \rho_t = -  \diver(  \rho_t \nabla \varphi  ).
$$

This allows to formally construct the gradient of functions. Take the free energy of the diffusion term $\calU$.
In distributional sense the Wasserstein gradient is 
\begin{align*}
    \langle \nabla_{\Wass_2} U [\rho_0] , s \rangle &\defeq  \frac{\diff}{\diff t}\Big|_{t=0} \calU[\rho_t] = \int_\Rd U'(\rho_0 ) \frac{\partial }{\partial t}\Big|_{t=0} \rho_t \\
    &= - \int_\Rd U'(\rho_0 ) \diver(\rho_0 \nabla \varphi) 
    \\
    &=-\int_\Rd \varphi \diver( \rho_0 \nabla U'(\rho_0) ).
\end{align*} 
In general
\begin{equation} 
    \langle \nabla_{\Wass_2} \mathcal F[\rho_0] , s \rangle = 
    -\int_\Omega \varphi \diver\left( \rho \nabla \frac{\delta \mathcal F}{\delta \rho}[ \rho_0 ]\right)
\end{equation}
Hence, \eqref{eq:ADE} in $\Rd$ is formally written can be formally written as the $2$-Wasserstein gradient flow in the sense that
$$
    \frac{\partial \rho}{\partial t} = - \nabla_{\Wass_2} \mathcal F[\rho_t]
$$
of the free energy given by \eqref{eq:free energy}.

\begin{remark}
    \label{rem:Otto's calculus in bounded domains}
    If we work in a bounded domain, then we must study the space $\mathcal P_2 (\overline \Omega)$ and  understand how the no-flux condition \eqref{eq:BC Omega} appears naturally in this context.
    One approach is to consider $\varphi \in C_c^\infty(\overline \Omega)$ (meaning their support is bounded and does not reach the boundary). In order for $\mathrm{id} + t \nabla \varphi : \overline \Omega \to \overline \Omega$ when $t \in (-\varepsilon,\varepsilon)$ we must specify $\rho \nabla \varphi \cdot n = 0$.
    This is related to how the no-flux appears. We will continue this discussion below in \Cref{rem:JKO bounded domains}.
\end{remark}

\subsubsection{Rigorous gradient flow structure. The JKO scheme}
The idea of writing gradient flows in terms of minimising movements is due to De Giorgi (see \cite{DeGiorgi1993}). To draw a quick analogy think that we can to minimise a function $F:\mathbb R^d \to \mathbb R$. A continuous ODE going to local minimima is the $\Rd$ gradient flow
$$
    \frac{\diff x}{\diff t} = - \nabla F(x)
$$
The implicit Euler scheme leads to the implicit gradient-descent method
$$
    \frac{x_{n+1} - x_n}{\tau} = - \nabla F(x_{n+1}).
$$
We have avoided making explicit the dependence of $x_n$ with respect to $\tau$.  
We can rewrite this the minimisation of the so-called proximal function
\begin{equation}
    \label{eq:grad descent Rd}
    x_{n+1} = \argmin_{x \in \Rd} \left( \frac{|x-x_n|^2}{2} + \tau F(x) \right)
\end{equation}
If $F$ is smooth this minimisation problem has a unique solution for $\tau$ small. These are the so-called \emph{minimising movements}.
Then one can do a piece-wise constant interpolation $x^{(\tau)} (t) \defeq x_n$ when $t \in [\tau n, \tau (n+1))$, or even linear interpolation.

\bigskip 

The notion of minimising movement can be generalised to metric spaces and, in particular, we can construct in $2$-Wasserstein space
\begin{equation} 
    \label{eq:JKO}
    \mu_{n+1} = \argmin_{\mu \in \mathcal P_2(\Rd)} \left( \frac{\Wass_2(\mu,\mu_{n})^2}{2} + \tau \mathcal F(\mu) \right).
\end{equation}
Jordan, Kinderlehrer, and Otto proved in the seminal paper
\cite{JordanKinderlehrerOtto1998}
that this procedure works for \eqref{eq:Fokker linear}. The book \cite{AmbrosioGigliSavare2005} is devoted to proving how these minimising movements lead to \eqref{eq:ADE} in $\Rd$ for fairly general $\mathcal F$.
A good notion of solution for the limit of minimising movements is that of \emph{curves of maximal slope}. Often, it is even possible to get back a suitable solution of a PDE.

\begin{remark}
    \label{rem:JKO bounded domains}
    In bounded domains, we continue the comment in \Cref{rem:Otto's calculus in bounded domains}. In general, if one minimises over $\mathcal P_2(\overline \Omega)$, then we expect to arrive \eqref{eq:ADE} with the no-flux condition \eqref{eq:BC Omega star}.
    We recommend \cite[Chapter 8]{Santambrogio2015}.
\end{remark} 

However, the general setting is tricky as presented by the following example.

\begin{remark}
    \label{eq:transport in a bounded domain}
    Take $\Omega = (-1,1)$ (i.e., $d=1$), $U = W = 0$ and $V(x) = a x$. Then we recover the transport equation $\frac{\partial \rho}{\partial t} = a \frac{\partial \rho}{\partial x}$. The solutions of this equation are always of the form $\rho(t,x) = \rho_0(x+at)$. The boundary condition $a \rho = 0$ can be forced on one side, but never the other.
    From the perspective below one can always consider a Dirac delta on the right-hand side. This negates somehow $a\rho = 0$ on that side, but it is only possible solution. In fact, it is recovered from the vanishing viscosity formulation. 

    Notice that this examples does not satisfy \eqref{eq:V and K boundary condition}. If we replaced $V$ by a different smooth potential such that \eqref{eq:V and K boundary condition}, then the characteristics will not reach the boundary in finite time. Hence, we will expect a smooth solution.
\end{remark} 

\begin{remark}
    \added[id=O, comment=we added a comment on the doubly nonlinear equation and very recent reference]{%
    Otto's original paper \cite{Otto1996} already covered $p$-Wassersten distances with $p \ne 2$. In particular, it showed that the \emph{doubly nonlinear equation}
    \[
      \frac{\partial u}{\partial t} = \Delta_p u^m
    \]
    has a $p$-Wasserstein gradient-flow structure.
    Recently, a new article has appeared justifying the JKO scheme for these problems, see \cite{caillet2024DoublyNonlinearDiffusive}.
    }%
\end{remark}

\subsubsection{Convexity and McCann's condition}
\label{sec:convexity}
In the same way that \eqref{eq:grad descent Rd} works better if $F:\Rd \to \R$ is convex, there is a suitable notion of convexity that works for \eqref{eq:JKO}. The correct extension is convexity along geodesics also called \emph{displacement convexity}, i.e., if $\rho_t$ is a geodesic from $\rho_0$ to $\rho_1$ then
$
    t \mapsto \mathcal F[\rho_t]
$
is a convex $[0,1] \to \mathbb R$ function.
It is usually called \textit{displacement convexity} and was introduced by McCann in \cite{McCann1997}\footnote{It is interesting to mention that this work only mentions Wasserstein spaces in the title of one of the references, and rather speaks of a ``a novel but natural interpolation between Borel probability measures''}. 

\bigskip

Before we present this result, it is interesting to discuss the structure of geodesics in Wasserstein space (see, e.g., \cite{AmbrosioBrueSemola2021}). As the simplest case, assume that $\mu, \nu$ are absolutely continuous probability distributions and compactly supported. Then, Monge's problem \eqref{eq:Monge problem} for $p = 2$ admits an optimal transport map $T = \nabla \varphi$ such that $\nu = T_\sharp \mu$ (see, e.g., \cite[Lecture 5]{AmbrosioBrueSemola2021}). Furthermore, the unique geodesic between $\mu$ and $\nu$ is given by 
$$\rho_t = ((1-t) \mathrm{id} + t T)_\# \mu.$$

\bigskip

In \cite{McCann1997} the author proves that if $V, W$ are strictly convex, $W(x) = W(-x)$ then 
$$
    \int_\Rd V \rho , \qquad \iint_{\Rd \times \Rd} W(x-y) \rho(x) \rho(y) \diff x \diff y
$$  
are displacement convex. For the diffusion term there is a more interesting condition, known as \textit{McCann's condition}. It states that if we define
$$
    P'(s) \defeq s U'(s), \qquad P(0) \defeq 0
$$
then \eqref{eq:free energy U} is displacement convex if and only if
\begin{equation}
    \label{eq:McCann}
    P'(s) s \ge (1-1/d)P(s) \ge 0, \qquad \forall s \in (0,\infty).
\end{equation} 
This can also be written as $\lambda \mapsto \lambda^d U(\lambda^{-d})$ is convex.
When $U = \frac{s^m}{m-1}$ this means $m \ge \frac{d-1}{d}$.

\begin{remark}
    The case $m > \frac{d-1}{d}$ and $V(x) = \frac{|x|^2}{2}$ the functional is $0$-convex and bounded below. But recalling \Cref{eq:PME numerology 1} for $m < \frac{d}{2+d}$ the \eqref{eq:Barenblatt profile} is not in $\mathcal P_2$, we cannot hope for asymptotic convergence in 2-Wasserstein sense.
\end{remark}

\bigskip

Besides ``regular'' convexity, a powerful tool in the arsenal is the notion of $\lambda$-convexity. A function $f: (0,1)\to \mathbb R$ is $\lambda$-convex if $f(t) - \lambda \frac{|t|^2}{2}$ is convex. Analogously as above we introduce the notion of displacement (or geodesically) $\lambda$-convex. In this setting for the gradient flow starting from two points we have
$$
    \Wass_2(\mu_t, \widehat \mu_t) \le e^{-\lambda t} \Wass_2(\mu_0, \widehat \mu_0).
$$
This implies uniqueness of the gradient flow and, if $\mathcal F$ is bounded below, of its minimisers. 

\bigskip 

A similar analysis of a free energy in $\mathbb S^1$ is done in \cite{CarrilloSlepcev2009}.
The sense in which the equation is satisfied is delicate. For $U = V = 0$ and $W$ ``pointy'', we send the reader to \cite{CarrilloDiFrancescoFigalliLaurentSlepcev2011}.

	\subsection{Global existence vs finite-time blow-up}

As explained above, local-in-time existence is usually achieved through ``standard'' PDE methods or the use of gradient flow arguments, specially if starting from a nice initial datum $\rho_0$. 
Whether these solutions behave ``nicely'' for large $t$ is a more difficult problem. 

\paragraph{Transport for given potential} Going back to the simplest example \eqref{eq:ADE U,W = 0} we can look at the examples $V = \frac{|x|^\alpha}{\alpha}$ with $\alpha > 0$. 
Then $v = -\nabla V = - |x|^{\alpha-2} x$ and $\rho_t = (X_t)_\sharp \rho_0$ with corresponding characteristic field is obtained by solving
$$
    \frac{\partial X_t}{\partial t} = -|X_t|^{\alpha-2} X_t,
$$
with $X_t(0) = y$. We conclude that 
$$
    X_t (y) = 
    \begin{dcases} 
        y e^{-t} & \text{if }\alpha=2, \\
        ( |y|^{2-\alpha} - (2-\alpha) t  )^{\frac 1 {2-\alpha}} \frac{y}{|y|} & \text{if } \alpha \ne 2.
    \end{dcases}
$$
For $\alpha < 2$ the characteristics arrive at $0$ at finite time and hence we get a Dirac delta at $0$ at a certain finite time for any $\rho_0 \not\equiv 0$. 
This is not problematic in the distributional sense in this case. However, this behaviour happens also for other problems.

\paragraph{Keller-Segel problem.} Some authors noticed by direct techniques that the Keller-Segel model has finite-time blow-up for initial data with large enough mass.
\cite{JaegerLuckhaus1992} showed for \eqref{eq:Keller-Segel un-normalised} in a bounded domain with no-flux the existence of a critical mass $M^*$ such that, if $\| \rho_0 \|_{L^1} > M^* $, 
then $\rho(T^*)$ contains a Dirac delta. Later this result was obtained also in $\Rd$ in \parencite{HerreroVelazquez1996} using radial arguments, where the authors characterise the critical mass is $M^* = 8 \pi$. This was later proved in more generality in \cite{DolbeaultPerthame2004a}.
Conversely, for $\| \rho_0 \|_{L^1} < M^*$ global existence for \eqref{eq:Keller-Segel un-normalised} is known. In $d = 2$ it was shown in \cite{BlanchetDolbeaultPerthame2006}. 

\bigskip

A nice proof of the formation of a Dirac comes through the analysis of the second-order moment.
For \eqref{eq:Keller-Segel un-normalised} in $d = 2$ it was first noticed by 
\cite{DolbeaultPerthame2004a}
that, working with solutions of initial mass $\int_\Rd \rho_0 = M$,
$$
\frac{\diff}{\diff t} \int_\Rd |x|^2 \rho(t,x) \diff x 
= 
4 M \Big( 1- \frac{M}{8\pi }\Big).
$$
Hence, if $M > {8 \pi}$ necessarily the second-order moment becomes negative in finite time. This is incompatible with our non-negative solutions. There is complete concentration to a Dirac delta. This was later extended by \cite{BlanchetCarrilloLaurencot2009} for \eqref{eq:Keller-Segel un-normalised} when $d > 2$ where the authors prove that
\begin{equation*}
    \frac \diff {\diff t} \int_\Rd |x|^2 \rho(t,x) \diff x 
    = 
    2 (d-2) \mathcal F[\rho(t,\cdot)]  
    \le 
    2 (d-2) \mathcal F[\rho_0].
\end{equation*} 
When $M > M^*$, there exist $\| \rho_0 \|_{L^1} = M$ such that $\mathcal F[\rho_0] < 0$. 
We will perform the analysis of the free energy for this problem in \Cref{rem:scaling free energy KS}.
We will discuss below in \Cref{eq:KS bubbles} the critical case $M = 8\pi$ and some interesting \textit{ad-hoc} constructions which are available in the literature.

\section{Asymptotics}
	The aim of this section is to highlight some techniques coming from the PDE community that allow us to understand the behaviour as $t \to \infty$ (and sometimes $|x| \to \infty$) of ``typical'' solutions of the problem (namely those with ``good'' initial values).
	\subsection{Self-similarity}

Some of the equations we are analysing have linear operators \emph{homogeneous non-linearities}, and it is therefore interesting to exploit this structure. 
One of the main approaches comes from considering the mass-preserving change of variable
\renewcommand{\unk}{u}
\begin{equation} 
    \label{eq:mass preserving change of variable}
        \unk(\tau,y) = \sigma(\tau)^{d} \rho\Big(\tilde t(\tau), \sigma(\tau)y\Big).
\end{equation}

\subsubsection{Self-similar scaling for \texorpdfstring{\eqref{eq:PME}}{(PME)}}
\label{sec:self-similar PME}

	Let $\rho$ be the solution of \eqref{eq:PME}.
    Applying the chain rule to \eqref{eq:mass preserving change of variable}                      
    \begin{align*}
        \frac{\partial \unk}{\partial \tau} 
        &=  \frac{\frac{\diff \sigma}{\diff \tau}}{\sigma} \unk \diver_y y +  \sigma^{d}\frac{\diff \sigma}{\diff \tau}   \nabla_x \rho \cdot \nabla_y y + \sigma^{d}\frac{\diff \tilde t}{\diff \tau} \Delta_x \rho^m
    \end{align*}
    since $\nabla_x x = (1, \cdots, 1) = \nabla_y y$ y $\diver_x x = d = \diver_y y$.
    Due to the chain rule
    \begin{align*} 
        \frac{\partial \unk}{\partial y_i} &= \sigma^{d+1} \frac{\partial \rho}{\partial x_i} , \qquad \qquad 
        \frac{\partial^2 \unk^m}{\partial y_i^2}  = \sigma^{2+md} \frac{\partial^2 \rho^m}{\partial x_i^2}
    \end{align*}
    Eventually, we simplify the equation for $u$ to
    \begin{align*}
        \frac{\partial \unk}{\partial \tau} 
        &= \frac{\frac{\diff \sigma}{\diff \tau}}{\sigma} \diver_y(\unk y) + \sigma^{-d(m-1) + 2} \frac{\diff \tilde t}{\diff \tau} \Delta_x \rho^m 
    \end{align*} 
    The only sensible choice is $\frac{\diff \sigma}{\diff \tau} = \sigma$ and $\frac{\diff \tilde t}{\diff \tau} = \sigma^{\frac 1 \kappa}$ where 
    $\kappa = (d(m-1)+2)^{-1},$
    so we arrive to the simplified equation
    \begin{align*}
        \frac{\partial \unk}{\partial \tau} 
        &=  \diver_y(\unk y) + \Delta_y \unk^m.
    \end{align*} 

    \paragraph{Forward self-similar solutions}
    
    Notice that adding the condition $\sigma(0) = 1$ we get $\sigma(\tau) = e^\tau$.
    We also have that $\frac{\diff \tilde t}{\diff \tau} = e^{\tau / \kappa}$. We have $t = \tilde t(\tau)$ so we set $\tilde t(0) = 0$. Eventually
    \begin{align*} 
        t &= \frac{ e^{\frac \tau \kappa} - 1  }{\kappa} \\
        \kappa &= (d(m-1)+2)^{-1} \\
        x &= e^\tau y.
    \end{align*}
    Notice that $\kappa > 0$ if and only if $m > m_c = \frac{d-2}{d}$.   
    The Barenblatt solution \eqref{eq:Barenblatt} corresponds to setting $u$ as stationary. 
    In fact, for $m > m_c$ we can write the family of self-similar solutions
    \begin{align*}
        \rho_t(x) &\defeq \frac{1}{\sigma_B(t+T)^{d}} F\left( \frac{x}{\sigma_B(t+T)}  \right), \\
        F_B(y) &\defeq \left(D - \frac{m-1}{2m}|y|^2\right)^{\frac 1 {m-1}}_+\\
        \sigma_B(t) &\defeq (  d|m-m_c|t  )^{\frac 1 {d(m-m_c)}}.
    \end{align*}
    Here we have chosen the presentation in \cite{carrillo2000Asymptotic1decaySolutions,blanchet2009AsymptoticsFastDiffusion}.

\paragraph{Backwards self-similar solutions}
    When $\kappa < 0$ (i.e., $m > m_c = \frac{d-2}{d}$), we can build another type of solution.
    In \cite[Section 5.2]{Vazquez2006}, Vázquez introduces the pseudo-Barenblatt solutions for the sub-critical regime $0 < m < m_c$
    which can be written
    \begin{align*}
        \rho_t(x) &\defeq \frac{1}{\sigma_B(T-t)^{d}}  F_B\left( \frac{x}{\sigma_B(T-t)}  \right). \\
    \end{align*}
    Here, $T$ denotes the extinction time.
    They are suitable to deal with finite-time extinction \cite{blanchet2009AsymptoticsFastDiffusion}.

\subsubsection{Self-similar analysis in more general contexts}

In the previous computation we can trade $\Delta_x \rho^m$ for any non-linear operator $\mathcal L$ with the scaling property
$$
    \mathcal L [u] = \sigma^q \mathcal L[\rho].
$$
Self-similarity analysis for 
\eqref{eq:Caffarelli-Vazquez} was performed in \cite[Theorem 3.1]{CaffarelliVazquez2011b}. The result passes by an obstacle problem. 
For \eqref{eq:Newtonian vortex} the self-similar analysis is done in \cite[Section 6.2]{SerfatyVazquez2014}.
For the fractional heat equation see \cite{gentil2008LevyFokkerPlanck}.

\bigskip 

Self-similar solutions are very particular of problem where all terms have the same scaling, and it is not reasonable to expect solutions of this nature to exist in general.
Consider the problem
$$
    \frac{\partial \rho}{\partial t} = \mathcal L \rho. 
$$
When $\mathcal L$ is the sum of two terms with different scaling, it is not reasonable to expect self-similar structure.
For \eqref{eq:ADE} with $U = U_m, V = 0, W=W_k$ this is only possible in the so-called \emph{fair-competition regime}
$$
    d(m-1) + k = 0.
$$
The self-similar analysis in this setting was performed in \cite{calvezEquilibriaHomogeneousFunctionals2017}.

\subsubsection{The Keller-Segel problem: bubbles}
\label{eq:KS bubbles}
For $d = 2$ a self-similar solution is not available, although \cite{HerreroVelazquez1996} give some hints on the shape by matching asymptotics. Later, there were more advanced studies on stability \cite{velazquez2002StabilityMechanismsChemotactic,velazquez2004PointDynamicsSingular,velazquez2004PointDynamicsSingulara}.
In this direction, see also \cite{raphael2014StabilityCriticalChemotactic,collot2022RefinedDescriptionStability}.

In 
\cite{herrero1998SelfsimilarBlowupReactiondiffusion} the authors prove there are no self-similar solution of \eqref{eq:Keller chi} when $d = 2$. 
However, for $d = 3$ they show that there exists a sequence of radial self-similar solutions $\rho^{(n)}$ 
$$
    \rho^{(n)}_t (x) = \frac{1}{T-t} F_n \left(\frac{x}{\sqrt{T-t}}\right),\qquad \qquad  \text{where } F_n(y) \sim \left( \frac{8\pi}{\chi} + a_n \right)\frac{1}{4 \pi |y|^2} \text{ for } |y| \sim 0,
$$
and $a_n \to 0$.
This construction was later generalised for $d \ge 3$ by several authors \cite{senba2005BlowupBehaviorRadiala,giga2011AsymptoticBehaviorType,souplet2019BlowupProfilesParabolic}.
\added[id=O]{Blow-up on the range $U = U_m$ for $m \ge 1$, $V = 0$ and $W = W_k$ with $k > 2-d$ see \cite{YaoBertozzi2013BlowupDynamicsAggregation}.}

\bigskip

It was later conjectured in \cite{delpino2017BubblingBlowUpCritical} and proved in \cite{davilaExistenceStabilityInfinite} that for $d = 2$ one can construct
``approximate bubbles''. In particular,
 an initial datum $\rho_0^\star$ with initial mass $8 \pi$ so that the solutions of Keller-Segel problem \eqref{eq:Keller-Segel un-normalised} such that, for $\rho_0$ close to $\rho_0^\star$
$$
    \rho(t,x) \approx \frac{1}{\sigma(t)^2} F\left( \frac{x-\chi(t)}{\sigma(t)} \right), \qquad F(y) = \frac{8}{(1+|y|^2)^2}
$$  
as $t \to \infty$
where $\sigma(t) \approx c/ \sqrt{\log t}$, $\chi(t) \to q$.

	\subsection{Relative entropy} 
\label{sec:relative entropy}

If we are able to construct \emph{a} global solution $B_t$ of which we know some properties, we would like to see whether this is the ``generic behaviour''.
Assume we are in a case where $\| \rho_t \|_{L^1}$ is preserved. We would say that $B_t$ is attractive if
$$
    \| \rho_t - B_t \|_{L^1} \to 0, \qquad \text{as } t \to \infty.
$$
For $L^p$ norms in general (which typically change over time), we would to see if the \emph{relative error} tends to zero
\begin{equation}
    \label{eq:intermediate asymptotics}
    \frac{\| \rho_t - B_t \|_{L^p}}{\| \rho_t \|_{L^p}} \to 0, \qquad \text{as } t \to \infty.
\end{equation}
These are usually called \emph{intermediate asymptotics}, as opposed to the \emph{long-time asymptotic} limit.

\bigskip

Naïve computations do not yield good results.
Sometimes it is possible to do this computation directly via comparison arguments. But, in general, this is not possible. 
A useful tool is are the so-called \emph{relative entropies}.
We start by a simple example, \eqref{eq:HE}. 

\subsubsection{\texorpdfstring{$L^2$}{L2} relative entropy for \texorpdfstring{(\ref{eq:HE})}{(HE)}}
An interesting alternative way of writing \eqref{eq:Fokker linear}
\added[id=R1,comment=we thank the referee for the thoughtful comments on the relation between FP and Ornstein-Uhlenbeck]
{%
    as
    \begin{equation*}
        \frac{\partial u}{\partial t} = \diver\left(G \nabla \frac{u}G\right),
    \end{equation*}
    where $G$ is the Gaussian profile \eqref{eq:Gaussian}.
    Let us take $w = \tfrac u G$ we rewrite this problem as
    \begin{equation*}
        \frac{\partial w}{\partial t} = G^{-1} \diver \left( G \nabla w \right) = \Delta w - x \cdot \nabla w.
    \end{equation*}
    This is the famous Ornstein-Uhlenbeck problem, which is also the dual \eqref{eq:Fokker linear}.
}%
We can write the free-energy dissipation formula
$$
    \frac{\diff}{\diff t} \int_\Rd |w-1|^2 
    \added[id=R1]{G} 
    \diff x = - 2 \int_\Rd |\nabla w|^2 G\diff x.
$$
We can now take advantage of the Gaussian Poincaré inequality%
\added[id=O]{%
    , see \cite{Chernoff1981NoteInequalityInvolving},
}%
$$
    \int_\Rd w^2 G\diff x - \left( \int_\Rd w G \diff x \right)^2 \le \int_\Rd |\nabla w|^2 \added[id=R1]{G} .
$$
Hence, 
\added[id=R1]{%
    assuming that
    $
        \int_\Rd w \added[id=R1]{G = \int_\Rd u} = 1,
    $
}%
we recover that
$$
    \int_\Rd |w_t - 1|^2 G\diff x \le e^{-2t} \int_\Rd |w_0 - 1|^2 G \diff x.
$$
This approach can be generalised in many directions. We point the reader to \cite{ArnoldMarkowichToscaniUnterreiter2001} 
\added[id=R1]{%
and the references therein.
}%

\subsubsection{\texorpdfstring{$L^1$}{L1} Relative entropy argument for (\ref{eq:HE})} 
\renewcommand{\unk}{v}
For convenience, we will denote the solution of \eqref{eq:Fokker linear} by $\unk$. Notice that, with our choice of re-scaling $v_0 = \rho_0$.
The relative entropy is defined 
\begin{equation*}
    \mathcal E (\unk) = \int_\Rd \unk \log \frac{\unk}{G} = \int_\Rd \unk \left( \log \unk + \frac{x^2}{2} \unk \right)  + C.
\end{equation*}
It is easy to check that
$$
    \frac{\diff}{\diff t} \mathcal E(\unk) = - \mathcal I (\unk),
$$
where $\mathcal I$ is the so-called \textit{Fisher information}
\begin{equation*}
    \mathcal I (\unk) = \int_\Rd \unk \left| \frac {\nabla \unk }{\unk} + x \right|^2 = \int_\Rd \unk \left| \nabla \log \frac \unk G \right|^2 .
\end{equation*}
The Gaussian logarithmic Sobolev inequality (see \cite{Gross1975,Toscani1999,DolbeaultToscani2015})
\begin{equation}
    \label{eq:Gaussian log-Sobolev}
    \frac 1 2 \int_\Rd (|f|^2 \log |f|^2) G \diff x \le \int_\Rd |\nabla f|^2 G, \qquad \text{if } \int_\Rd |f|^2 G = 1.
\end{equation}
This inequality ensures the relationship
$\mathcal E \le \tfrac 1 2 \mathcal I$,
and hence we recover an ordinary differential inequation that yields $\mathcal E(\unk_t) \le \mathcal E(\rho_0) e^{-2t}$.
Lastly, we will take advantage of  the Czsizar-Kullback inequality taking $G = K_1$
\begin{equation*}
    \| f - G \|_{L^1}^2 \le 2 \mathcal E(f).
\end{equation*}
Eventually, we deduce that if $\rho$ is a solution \eqref{eq:HE}
\begin{equation*}
    \| \rho_t - K_t \|_{L^1} \le \sqrt {2 \mathcal E(\rho_0)} t^{-\frac 1 2}.
\end{equation*}

\begin{remark}
    It is also worth point out that \eqref{eq:Gaussian log-Sobolev} is equivalent to the Euclidean log-Sobolev inequality, which in scale invariant form (see \cite{weissler1978LogarithmicSobolevInequalities,DolbeaultToscani2015}) given by
    \begin{equation}
        \label{eq:log-Sobolev}
        \frac{d}{2} \log\left( \frac{2}{\pi d e} \int_\Rd |\nabla f|^2   \right)  \ge \int_\Rd |f|^2 \log |f|^2 , \qquad \text{if } \int_\Rd |f|^2 = 1.
    \end{equation}
    Notice that is usually applied to $f = \sqrt \rho$, and hence the condition is simply that $\rho \in L^1 (\Rd) \cap \Prob (\Rd)$.
\end{remark}

\subsubsection{\texorpdfstring{$L^1$}{L1} relative entropy for (\ref{eq:PME})}
\label{sec:relative entropy PME}

In 
\added[id=R1]{%
    \cite{carrillo2000Asymptotic1decaySolutions,delpino2002BestConstantsGagliardo} 
}%
the authors extend the relative-entropy study to $m > 1$.
Now we denote by $\unk$ the solution of \eqref{eq:Fokker PME}.
Suppose there exists $\hat \unk$ of suitable mass and define the relative entropy and the Fisher information
$$
    H(\unk) = \int_\Rd \left(   \frac{2}{m-1} \unk^m  + |x|^2 \unk    \right) 
\qquad \qquad
    I(\unk) = \int_\Rd \unk\left|  x + \frac{m}{m-1} \nabla \unk^{m-1}  \right|^2.
$$
The suitable relative entropy for $m > 1$ is $\mathcal H(u||\hat u) = \mathcal H (u) - \mathcal H(\hat u)$.
This was later extended to more general families than \eqref{eq:Fokker PME} in \cite{CarrilloJuengelMarkowichToscaniUnterreiter2001}.
This relative-entropy arguments rely heavy on Gagliardo-Nirenberg-Sobolev inequalities. In fact, there is a deep connection between the smoothing properties of \eqref{eq:PME} and these types of inequalities (see, e.g., \cite{delpino2002BestConstantsGagliardo}).
In
\cite{blanchet2009AsymptoticsFastDiffusion,bonforte2010SharpRatesDecay} the authors develop the correct relative entropies and Fisher information functions to cover the cases $m < 1$ with the correct re-scaling of the Barenblatt/pseudo-Barenblatt explained in \Cref{sec:self-similar PME}.

\begin{remark}
    This kind of relative entropy arguments for diffusive problems (which do not have a natural stationary state) rely on the self-similar solution.
    It is possible to prove intermediate asymptotics without homogeneity or self-similarity techniques. 
    For diffusive problems of the form $\frac{\partial \rho}{\partial t} = \Delta \Phi (\rho)$, a very nice analysis using Wasserstein spaces can be found in
    \cite{carrillo2006IntermediateAsymptoticsHomogeneity}.
\end{remark}

\subsubsection{Relative entropy argument for (\ref{eq:ADE})}
\label{sec:relative entropy for ADE}

The entropy arguments for \eqref{eq:PME} are stable enough that allow some families of perturbations into the range \eqref{eq:ADE}. 
For $W = 0$ and confining potentials see \cite{CarrilloJuengelMarkowichToscaniUnterreiter2001}.
The reader may find information on the strictly displacement-convex free-energy functionals \cite{CarrilloMcCannVillani2003,CarrilloMcCannVillani2006}.
For \eqref{eq:Keller PME} with $m \in [1,2-\frac 2 d]$ (and, in fact, a larger family of $W$ radially decreasing), intermediate asymptotics in the sense \eqref{eq:intermediate asymptotics}  are obtained in \cite{bedrossian2011IntermediateAsymptoticsCriticala}.
In \cite{Carrillo+GC+Yao+Zheng2023} the authors take advantage of relative entropy arguments to prove that in the case $U(\rho) = \rho \log \rho, V = 0$ and $W$ smooth and bounded \eqref{eq:intermediate asymptotics} with $p = 1$ and $B_t = K_t$ (i.e., the heat kernel).
See \cite{CanizoCarrilloSchonbek2012} for previous results in this direction.

\subsection{Study of the mass variable of radial solutions}

One of the significant limitations to prove the above results rigorously is the lack of regularity of $\rho$. Some regularity can be regained by passing to the so-called \emph{mass variable}.

\bigskip

Let $d = 1$. Assume $x \in \mathbb R$ and 
\begin{equation}
    \label{eq:mass of rho}
M(t,x) = \int_{-\infty}^x \rho_t(y) \diff y.
\end{equation}
Integrating \eqref{eq:ADE} we show that $M$ satisfies a PDE
$$
\begin{aligned}
    \frac{\partial M}{\partial t} &=   \Phi' \left (\frac{\partial M}{\partial x} \right )  \frac{\partial^2 M}{\partial x^2 } + \frac{\partial M}{\partial x} \frac{\partial V}{\partial x} +  \frac{\partial M}{\partial x} \frac{\partial }{\partial x} \left(W*\frac{\partial M}{\partial x}\right) 
\end{aligned} 
$$
If $\Phi(s) = |s|^{m-1}s$ (PME for $\rho$), this is the $p$-Laplacian for $M$ with $p = m+1$. 
If $x \in \mathbb R^d$ with $d > 1$ it is better to define the mass in volumetric coordinates
$$
M(t,v) = \int_{A_v}  \rho(t, x) \diff x, \qquad \text{where } A_v = B(0,r) \text{ such that } |A_v| = v. 
$$
Then, if $\rho$ is radially, we recover for a weight $\kappa$
\begin{align*}
    \frac{\partial M}{\partial t} &=  \kappa(v)^2  \Phi' \left (\frac{\partial M}{\partial v} \right )  \frac{\partial^2 M}{\partial v^2 } + \kappa(v)^2\frac{\partial M}{\partial v} \frac{\partial V}{\partial v} + \kappa(v)^2  \frac{\partial M}{\partial v} \frac{\partial }{\partial v} \left(W*_{\Rd}\frac{\partial M}{\partial v}\right) 
\end{align*} 
This is a Hamilton-Jacobi type equation, and is well suited for the theory of viscosity solutions.
In \cite{Carrillo+GC+Vazquez2022JMPA} the authors discuss the problem when $W = 0$. They show that one can take advantage of stability properties of viscosity solutions to characterise the steady state. 
Since $\kappa(0) = 0$ and $\kappa(v) > 0$ for $v>0$, this equation can be used to show that for smooth $V$ and $W$ and radially symmetric $\rho$, singularities can only form at $x = 0$. They also show that a Dirac may form, but only in the limit $t \to \infty$. 

\bigskip 

Many authors have taken advantage of this kind of idea in their arguments. For example, we point to
\cite{herrero1998SelfsimilarBlowupReactiondiffusion,senba2005BlowupBehaviorRadiala,giga2011AsymptoticBehaviorType,KimYao2012}.

\bigskip 

There is also an interesting connection between the mass variable and the Wasserstein distance for $\rho$. The simplest case states that if $\rho_1, \rho_2 \in \mathcal P(\R)$, and $M_i$ are their primitives, then 
$$
    \Wass_1(\rho_1, \rho_2) = \| M_1 - M_2 \|_{L^1 (\R)}.
$$
A similar formula holds if $\rho_i \in \mathcal P(\Rd)$, and are radially symmetric.

\paragraph{The inverse of the mass function}
Furthermore, if we consider the generalised inverse
$$
    M^{-1}(s) = \inf\{ v \in [-\infty,\infty) : M(v) \ge s\}
$$
and $\rho_1, \rho_2 \in \mathcal P(\R)$, and $M_i$ are their primitives, then 
$$
    \Wass_p(\rho_1, \rho_2) = \| M_1^{-1} - M_2^{-1} \|_{L^p (0,1)}.
$$
Interestingly, if $\rho(t,x)$ solves a conservation problem, we can also deduce and equation for $u(t,s) = M^{-1} (t,s)$. This equation is usually degenerate, but the coefficients do not depend on $s$ (see, e.g., \cite{CarrilloHopfRodrigo2020}).
	\subsection{On the attractiveness of attractors}

Relative entropy arguments allow us to show that the steady state (or energy minimiser) in $L^1$ is an attractor for a large range of equations: from \eqref{eq:Fokker PME} to all the examples discussed in \Cref{sec:relative entropy for ADE}.

\bigskip 

A more difficult question is to understand the case where the energy minimiser may contain a singular part, e.g.,
\begin{equation}
    \label{eq:steady state with concentrated part}
    \widehat \mu = \widehat \rho + (1-\|\widehat \rho\|_{L^1}) \delta_0 ,
\end{equation}
and $\|\widehat \rho\|_{L^1} < 1$. 
We will see an example in the aggregation dominated range (see \Cref{cor:Carrillo Delgadino} below). 
It is difficult to say whether a measure of this kind can be called a \emph{steady state}, since it cannot be plugged into even the distributional formulation.
If the energy is $\lambda$-displacemente convex, there is no doubt that the minimiser is a global attractor. 

\bigskip 

In \cite{Carrillo+GC+Vazquez2022JMPA} the authors prove that \eqref{eq:ADE Omega} $m \in (0,1)$, $V$ smooth, and $K = 0$, the solution converges to the free energy minimiser, even when this contains a singular part.
In \cite{Carrillo+Fernandez-Jimenez+GC2023} the authors use the mass to prove that for \eqref{eq:ADE Omega} $m \in (0,1)$, in large class of $K$.

\bigskip 

In the recent preprint \cite{bian2023AggregationdiffusionEquationEnergy}, covering \eqref{eq:ADE} when $k = 2 - 2s \in (2-d,0)$ and $m = \frac{2d}{d+2s}$ (part of the \emph{fair-competition regime} described in \Cref{sec:minimisation of power cases}) shows that there is an explicit steady state (given in \eqref{eq:steady state conformal fractional} below), and solutions with $\|\rho_0\|_{L^m} > \|\widehat \rho\|_{L^m}$ have $\|\rho_t\|_{L^m}$ blow-up in finite time (and hence even we cannot study them in the distributional sense).

\section{Minimisation of the free energy}

As mentioned in \Cref{sec:convexity}, displacement $\lambda$-convexity with $\lambda > 0$ guarantees exponential contraction with rate
$$
    \Wass_2(\mu_t, \widehat \mu_t) \le e^{-\lambda t} \Wass_2(\mu_0, \widehat \mu_0)
$$
In particular, if $\widehat \mu$ is a global minimiser of $\mathcal F$ (which is unique by convexity), then the JKO schemes is stationary and so is the gradient flow. This means that 
\[
    \Wass_2(\mu_t, \widehat \mu) \le e^{-\lambda t} \Wass_2(\mu_0, \widehat \mu)
\]
Therefore, it is a global attractor with exponential rate.
First, we will try to characterise the local minimisers using variational arguments.
\added[id=O]{%
    This will lead us to Euler-Lagrange conditions, and generally corresponds to $2$-Wasserstein local minimisers.
    On each connect component of the support of a steady state, the Euler-Lagrange condition comes also from the free-energy dissipation \eqref{eq:free-energy dissipation}, and are simply $\frac{\delta F}{\delta \rho} = C$. 
}%

\bigskip

\added[id=O, comment=This paragraph motivates the use of $\infty$-Laplacian below]{%
    However, \eqref{eq:ADE} can have steady states that do not satisfy the Euler-Lagrange conditions, specially when the support has several components.
    A famous example that is known to the community is the case $U = U_m$ with $m > 1$, $ W = 0$, and a potential $V$ with two wells, e.g., $V(x) = |x-x_0|^2 + |x-x_1|^2$. 
    Then we can build formal steady states
    \begin{equation}
        \label{eq:PME+2well steady states}
        \hat \rho (x) = \Big((U_m')^{-1}(h_1 - V(x))\Big)_+ + \Big((U_m')^{-1}(h_2 - V(x))\Big)_+ \text{ even with } h_1 \ne h_2,
    \end{equation}
    provided the support of these two terms are disjoint. These are not 2-Wasserstein local minimisers, but they are, actually, $\infty$-Wasserstein local minimisers.
    Furthermore, these steady states attract some initial data.
    The study of $\infty$-Wasserstein local minimisation became quite popular, and we cite several references below.
}%

	\subsection{Local minimisers: Euler-Lagrange conditions}
\label{sec:Euler-Lagrange}

The aim of this section is show that if $\hat \rho \in L^1 (\Rd) \cap \mathcal P (\Rd)$ is a local minimiser of $\calF$ over 
\added[id=R1]{%
    $L^1 (\Rd) \cap \mathcal P (\Rd)$ 
}%
then we expect it to satisfy the following: there exists $h \in \mathbb R$ such that
\begin{align}
    \label{eq:Euler-Lagrange 1}
    \tag{EL$_1$}
    &\frac{\delta \calF}{\delta \rho}[\hat \rho] (x)   = h  \qquad \text{a.e. in } \supp \widehat \rho \\
    \label{eq:Euler-Lagrange 2}
    \tag{EL$_2$}
    &\frac{\delta \calF}{\delta \rho}[\hat \rho] (x)   \ge h  \qquad \text{a.e. in } \Rd
\end{align}
We recall that our main interest is the free energy $\mathcal F$ given by \eqref{eq:free energy star} for which the first variation $\frac{\delta \calF}{\delta \rho}$ is given by \eqref{eq:first variation}.

\subsubsection{First variation over the space of absolutely continuous probability measures}
\label{sec:first variation probability}
We follow the argument of 
\added[id=R1]{%
    \cite{BlanchetCarrilloLaurencot2009,balague2013DimensionalityLocalMinimizers,canizo2015ExistenceCompactlySupported,CarrilloCastorinaVolzone2015,calvezEquilibriaHomogeneousFunctionals2017,CarrilloHoffmannMaininiVolzone2018,CarrilloHittmeirVolzoneYao2019,Carrillo+GC+Vazquez2022JMPA} 
}%
where the reader may find rigorous proofs in different ranges. Our aim is to take variations of the forms
$$
    \widehat \rho_\ee = \widehat \rho + \ee \varphi
$$
where, if we pick test functions $\varphi$ suitably, we still have $\widehat \rho_\ee \in L^1 \cap \Prob$ so
$$
    \calF(\widehat \rho) \le \calF (\widehat \rho_\ee).
$$

\paragraph{Variations on the support of $\hat \rho$} 
Consider a test function $\psi \in C_c^\infty (\Rd)$ and introduce 
$$
    \varphi(x) = \left(  \psi(x) - \int_\Rd \psi(y) \hat \rho(y) \diff y    \right) \hat \rho(x) 
$$
Then for $\ee < \frac 1 2 \|\psi \|_{L^\infty}^{-1}$ we have $\hat \rho_\ee \in L^1 \cap \mathcal P_2$ and, using that $\widehat \rho$ is a minimiser and the definition of first variation, we get
\begin{align*}
    \int_\Rd \frac{\delta \calF}{\delta \rho}[\hat \rho] \varphi =  \lim_{\ee \to 0} \frac{1}{\ee} \left( \calF(\rho_\varepsilon) - \calF(\rho) \right) \ge 0.
\end{align*}
Expanding this integral and using $\| \hat \rho \|_{L^1} = 1$ we get
\begin{align*}
    0 &\le \int_\Rd \frac{\delta \calF}{\delta \rho}[\hat \rho] (x) \left(  \psi(x) - \int_\Rd \psi(y) \hat \rho(y) \diff y    \right) \hat \rho(x)   \diff x\\
    &=
     \int_\Rd \frac{\delta \calF}{\delta \rho}[\hat \rho] (x)  \hat \rho(x) \psi (x) \diff x -  \left( \int_{\Rd} \frac{\delta \calF}{\delta \rho}[\hat \rho] (x) \hat \rho(x) \diff x \right) \left( \int_{\Rd} \psi (y) \hat \rho(y) \diff y \right) 
\end{align*}
Lastly, trading $x$ and $y$ on the last two integrals we recover that for all $\psi \in C_c^\infty(\Rd)$ we have
$$
\int_\Rd \left( \frac{\delta \calF}{\delta \rho}[\hat \rho] (x)  -  h(\hat \rho)  \right) \psi(x) \hat \rho(x) \diff x \ge 0   , \qquad \text{where } h(\hat \rho) =  \int_{\Rd} \frac{\delta \calF}{\delta \rho}[\hat \rho] (y) \hat \rho(y) \diff y .
$$
Since this also holds for $-\psi$ the equality holds above, and hence 
\begin{equation*} 
    \left( \frac{\delta \calF}{\delta \rho}[\hat \rho] (x)  -  h(\hat \rho)  \right) \widehat \rho(x) = 0, \qquad \text{for a.e. } x \in \Rd 
\end{equation*} 
Hence we recover \eqref{eq:Euler-Lagrange 1}. This gives us complete information on the boundary of the support. But it could jump uncontrollably from $0$ to the other profile.

\paragraph{Variations outside the support of $\hat \rho$} 
Now we take the variations 
$$
    \varphi(x) = \psi(x) - \hat \rho (x) \int_{\Rd} \psi(y) \diff y.
$$
If $\psi \ge 0$ and $\ee < \|\psi\|_{L^1}^{-1}$ then $\hat \rho_\ee \ge 0$. Notice that we cannot hope for $\psi < 0$ outside the support of $\hat \rho$. 
With a similar argument as before we get
$$
    \int_\Rd \psi \left( \frac{\delta \calF}{\delta \rho}[\hat \rho] (x)  -  h(\hat \rho) \right) \ge 0 .
$$
Since this holds true for all $\psi$ is the conditions above, we have proven \eqref{eq:Euler-Lagrange 2}. 

\begin{remark} 
    For \eqref{eq:ADE} we have that
    \begin{align*} 
        h(\hat \rho) &
        = \int_\Rd (U'(\hat \rho)\hat \rho + V \hat \rho + (W*\hat \rho) \hat \rho ) 
    \end{align*}
    With PME diffusion
    \begin{equation*}
        h(\hat \rho) = \mathcal F(\hat \rho) + (m-1) \int \rho^m + \frac 1 2 \int_\Rd (W*\hat \rho) \hat \rho
    \end{equation*}
\end{remark} 

\subsubsection{Solving the Euler-Lagrange equation when \texorpdfstring{$U \ne 0$}{U ≠ 0}}

When $U \not \equiv 0$ then $U'$ is non-decreasing, and we get
\added[id=O]{%
    from \eqref{eq:Euler-Lagrange 2}
}%
\begin{equation}
    \label{eq:EL outside support U ne 0}
    \hat \rho  \ge (U')^{-1} ( h - V - W* \hat \rho ) 
\end{equation} 
we also know that $\hat \rho \ge 0$, and 
\added[id=R1]{%
    \eqref{eq:EL outside support U ne 0}
}%
equation holds 
\added[id=R1]{%
    with equality
}%
when $\hat \rho > 0$.
\added[id=O, comment=We have given some additional details on the deduction of (EL)]{%
    If the right-hand of \eqref{eq:EL outside support U ne 0} is positive, so is $\widehat \rho$ and equality holds. 
    When the right-hand side of \eqref{eq:EL outside support U ne 0} is non-positive, then $\widehat \rho = 0$. 
    Therefore, we have that
}%
\begin{equation*}
    \tag{EL$_3$}
    \label{eq:Euler-Lagrange}
    \hat \rho = \Bigg( (U')^{-1} \Big( h - V - W* \hat \rho \Big) \Bigg)_+, \qquad \text{for some constant } h \in \mathbb R,
\end{equation*}    
\deleted[id=O, comment=this line is redundant because we do it in the previous section]{The argument is a standard variation. We will use the optimality over a class $\hat \rho_\ee = \hat \rho + \varepsilon \varphi$ with suitably constructed  $\varphi$.}%

In particular, observe that when $U = U_m$, $V = \frac{|x|^2}{2}$ and $W = 0$ (i.e., \eqref{eq:Fokker PME}) this is the famous Barenblatt solution \eqref{eq:Barenblatt}.
More generally, the same approach holds true for the case  $W = 0$. In this case, \eqref{eq:Euler-Lagrange} is a one-parameter family $\hat \rho_h$. Furthermore, since $U'$ is non-decreasing we have that $\hat \rho_h$ is point-wise non-decreasing with $h$. Hence, in most cases $h$ can be recovered from the fixed value $\int_\Rd \widehat \rho_h$.

\begin{remark}
    For the fast diffusion case $U'(0) = \infty$. Hence, if $|h|<\infty$ and $W*\rho \in L^\infty_{loc}$ then $\widehat \rho > 0$.
\end{remark}

Notice that when $\hat \rho > 0$ then the solutions of \eqref{eq:Euler-Lagrange} are precisely stationary solutions in the sense of \eqref{eq:ADE stationary state}. When the support of $\hat \rho$ minimisers are still formally stationary, however the regularity on the free boundary (the boundary of the support) is more challenging.

\bigskip 

The cases $U = 0$ are also interesting. Take, for example \eqref{eq:Caffarelli-Vazquez} and re-scale to a Fokker-Planck problem. Then
$
    \frac{|x|^2}{2} + (-\Delta)^{-s} \widehat \rho \ge h.
$
Let $v \defeq (-\Delta)^{-s} \rho$. Hence, $(-\Delta)^s v = \hat \rho \ge 0$. 
We also get, due to \eqref{eq:Euler-Lagrange 1}, that when $\hat \rho > 0$ then equality holds in the last equation. Hence, we can write
$$
    \min\left\{ (-\Delta)^s v(x), v(x) + \frac{|x|^2}{2} - h   \right\} = 0, \qquad \text{for all } x \in \Rd.
$$
This is the well-known fractional obstacle problem. In this particular application it is discussed in \cite{CaffarelliVazquez2011b}. Regularity of solutions was proved in \cite{silvestre2007RegularityObstacleProblem}.

\bigskip 

Lastly, there is the case of the Newtonian/Riesz potential $W = - cW_k$ for $k = 2-2s \in [2-d,0)$. Then $u \defeq W * \rho = (-\Delta)^{-s} \rho$. Hence, we can re-write \eqref{eq:Euler-Lagrange} as
$$
    (-\Delta)^s u = ((U')^{-1} (u + h - V))_+
$$
The case of $U_m$ with $m > 1$, $V = 0$, and $s \in (0,1)$ was studied in \cite{chan2020UniquenessEntireGround}.
\comment[id=R1]{
    Indeed the reference \cite{carrillo+delgadino+mellet2016RegularityLocalMinimizers} was missing. 
    However, this section deals with solving the EL equations. We have cited below.
}%

\bigskip

The particular case $k \in (2-d,0)$ and $m = \frac{2d}{2d+k}$ has the correct scaling (sometimes called \emph{conformal}). The explicit self-similar solution can be found using the result in \cite{chen2006ClassificationSolutionsIntegral} and the references therein (see also \cite{bian2023AggregationdiffusionEquationEnergy}) to be
\begin{equation}
    \label{eq:steady state conformal fractional}
    \hat \rho (x) = B \left( \frac{\lambda}{\lambda^2 + |x-x_0|^2}  \right)^{\frac{d+2s}2}.
\end{equation}

\begin{remark}
    This also shows that in $\Rd$, when $U'$ is strictly increasing, $U'(0) = -\infty$, and $V,W$ is bounded, there can exist no stationary states. We compute the lower bound
    $$
        \hat \rho_h (x) \ge (U')^{-1} (  h - \| V \|_\infty - \|W \|_\infty \| \rho \|_\infty ) > 0.
    $$
    This means that, $\hat \rho_t \notin L^1(\Rd)$ is not for any $h \in \mathbb R$.
\end{remark}

\begin{remark}[Bifurcation analysis]
    Several authors have paid attention to the structure of local minimisers in terms of bifurcations. These analyses can be done for \eqref{eq:Kuramoto} in terms of Fourier coefficients. 
    In 
    \added[id=R1]{%
    \cite{carrillo2020LongTimeBehaviourPhase} 
    }%
    the authors study bifurcation of \eqref{eq:McKean-Vlasov} (with periodic boundary conditions, or equivalently in $\mathbb T^d$, with $W = \kappa \bar W$, and $V = 0$) by applying Crandall-Rabinowitz theory. They see local minimisers as solutions of the equation
    $$
        (\rho, \kappa) \mapsto \rho - \frac{e^{-\beta \kappa \bar W*\rho}  }{\int e^{-\beta \kappa (\bar W*\rho)(y)} \diff y},
    $$
    over the space of even functions. This was later extended in \cite{carrillo2021PhaseTransitionsNonlinear} to \eqref{eq:ADE} (again in the torus but with $U = U_m$ and $V = 0$) by taking the suitable extension of the energy.
\end{remark}

\begin{remark}
    When $U = 0$ the equation $V + W * \rho = h$ has been studied using different approaches, but we include these references in \Cref{sec:EL when U=0} below.
\end{remark}

	\subsection{Extension of the free energy to measure space}

Our free energy $\calF$ is defined for $\rho \in \mathcal P_2(\Rd) \cap L^1 (\Rd)$. 
This set is not dense in $\mathcal P_2(\Rd)$ with the total variation metric since for $\| \rho - \delta_0\|_{TV} = 2$.
However, it is dense in the narrow topology. We can therefore try to extend $\mathcal F$. The natural weak lower-semicontinuous extension is given by
$$
    \widetilde{\calF} (\mu) = \inf_{\rho_n \rightharpoonup \mu} \liminf_n \calF(\rho_n).
$$
Clearly, this is not so easy to compute directly.

\bigskip

Let us look first at the diffusion term in the homogeneous range $U_m$. As an extreme example, we have to make sense of $\delta_0^m$. An easy approach is to take $\rho_1 \in  \mathcal P_2(\Rd) \cap C_c^\infty (\Rd)$, define $\rho_\ee = \ee^{-d} \rho_1(\ee^{-1} x)$. Then $\rho_\ee \rightharpoonup \delta_0$. Integrating we recover
$$
    \calU_m (\rho_\ee) = \ee^{d(1-m)} \calU_m(\rho_1).
$$
Therefore, the extension has to be 
$$
    \widetilde{\calU_m} (\delta_0) = 
    \begin{dcases} 
        +\infty & \text{if } m > 1, \\
        0 & \text{if } m \in (0,1).
    \end{dcases} 
$$
The rigorous construction of $\widetilde{\cal U}$ in general setting can be found in
\cite{DemengelTemam1986}. In fact
$$
    \widetilde \calU(\mu) = 
    \begin{dcases} 
        +\infty & \text{if } 
        \added[id=R1]{%
            \lim_{s \to \infty }\tfrac{U(s)}{s} = \infty 
        }%
        \text{ and } 
        \added[id=R1]{%
            \mu_{\mathrm {sing}} 
        }%
        \not \equiv 0, \\
        \calU (\mu_{ac})  & \text{if } 
        \added[id=R1]{%
            \lim_{s \to \infty }\tfrac{U(s)}{s} =0,
        }%
    \end{dcases} 
$$
where 
\added[id=R1,comment=``and'' is the correct option. If there is no singular part{,} we do not need to extend the measure and the possible infinity comes from the Lebesgue integration]{%
$\mu = \mu_{\mathrm{ac}} + \mu_{\mathrm{sing}}$ 
}%
the absolutely continuous and singular parts of the measure. If $V, W$ are well-behaved (e.g., bounded by $C(1+|x|^2)$) then
$$
    \widetilde {\calF} (\alpha \delta_0 + \rho ) = \widetilde{\calU} (\rho) + \int V \diff \mu + \frac 1 2 \iint (W*\mu) \diff \mu. 
$$
For example, if $U$ is sublinear and $V, W$ are well-behaved
$$
\widetilde {\calF} (\alpha \delta_0 + \rho ) = \calF(\rho) + \alpha V(0) + 2 \alpha \int_\Rd W(x) \rho(x) \diff x + \alpha^2 W(0).
$$
The first variation in this setting is a little more involved, but it follows the same general scheme. 

\bigskip

However, if $W$ is singular at $0$ the landscape can be richer, for example as mentioned in \Cref{rem:scaling free energy KS} for \eqref{eq:Keller chi}, the correct behaviour extension is more difficult to construct.
In fact, we point the reader to \cite{delgadino2022UniquenessNonuniquenessSteady} where the authors construct example of $W$ with infinitely many radially decreasing steady states.
	\subsection{Existence of minimisers}
\label{sec:existence of minimisers}

\subsubsection{General comments}
For example, it is easy to show that when $m > 1$, and $V, W$ are $\lambda$-convex for $\lambda > 0$ and bounded below, then the free-energy functional admits a unique minimiser with the properties above using the direct method of calculus of variations. 
First, the functional is bounded below since 
\added[id=R1,comment=Reply to referee: in this paragraph we are discussing $m > 1$]{%
    $U_m \ge 0$ for $m > 1$.     
}%
We take a minimising sequence $\rho_n$, and we use Prokhorov's theorem to prove it has a narrow limit $\hat \mu$. Using weak lower semi-continuity we show that the limit is indeed a minimiser.

The situation in bounded domain $\Omega$ is easier, since all sequences of measures are tight (i.e., mass cannot escape to infinity) and the Lebesgue spaces are embedded in each other. We focus therefore on the more complicated case $\Rd$. 

\bigskip

To find energy minimisers in $\Rd$, one can try to construct minimising profiles by scaling.
Given a profile $\rho_1$, we can rescale it like 
$$
    \rho_\lambda (x) = \lambda^{d} \rho_1( \lambda x ).
$$ 
It might happen that \emph{full diffusion} (i.e., $\lambda \to 0$) is energy beneficial.
Indeed, we have the scaling
$$
    \calU_m (\rho_\lambda) = \begin{dcases} 
        \lambda^{d(m-1)} \calU_m (\rho_1) &\text{if } m \ne 1 \\
        \calU_m(\rho_1) + \log \lambda & \text{if } m = 1 ,
    \end{dcases} 
$$
Notice that, when $m > 1$ then $d(m-1)$ and $\calU_m(\rho_1) > 0$, whereas if $m < 1$ they are both negative. Furthermore, we observe that, when $m > 1$, then the energy is bounded below; and for $m \in (0,1]$ it is not.

\bigskip

It might also happen that \emph{full concentration} (i.e., $\lambda \to \infty$) is best for the energy, for example for $U, W = 0$ and $V = |x|^2$. Then we expect the minimiser to be a $\delta_0$.

\subsubsection{The case \texorpdfstring{$U = 0$}{U=0}}
\label{sec:EL when U=0}

There is a long literature looking for minimisers when $U = 0$. 
In this setting, some authors have studied the $\Wass_p$ local minimisers with $p \in (1,\infty]$. In \cite{balague2013DimensionalityLocalMinimizers} the authors realised that, when $V = 0$, the minimisers can be supported over sets of different dimensions, depending on the attractive-repulsive nature of $W$.
The stability of these minimisers was later studied in \cite{balague2013NonlocalInteractionsRepulsive}.
\added[id=R1]{%
    In \cite{balague2014ConfinementRepulsiveattractiveKernels} 
    the authors discuss
    the compact support of minimisers under general settings.
}%
For general conditions on existence of global minimisers see \cite{simione2015ExistenceGroundStates}.
\added[id=R1]{%
    The regularity of compactly supported $\infty$-Wasserstein when $V = 0$ was studied in \cite{carrillo+delgadino+mellet2016RegularityLocalMinimizers}.
}%
Recently, analysis of the Fourier transform has been applied to determine further structure of minimisers \cite{carrillo2022Minimizers3DAnisotropic,carrillo2022RadialSymmetryFractal}. 

\bigskip 

There is the particularly interesting case of $V = 0$ and the power-type attractive-repulsive potential
\begin{equation} 
    \label{eq:W attractive-repulsive}
    W(x) = \frac{|x|^\gamma}{\gamma} - \frac{|x|^\alpha}{\alpha}.
\end{equation} 
In this direction, there have a number of works.
The authors of \cite{carrillo2014GlobalMinimizersRepulsive} show existence of minimisers coming as a limit of empirical measures 
\added[id=R1]{%
    (recall \eqref{eq:empirical distribution})
}%
when $\gamma > \alpha$.
In dimension 1 one can construct solutions of $W*\rho = E$ by inverting Fredholm operators (see \cite{carrillo2017ExplicitEquilibriumSolutions}).
Nevertheless, the measures
$$\mu_{\mathfrak m} = \mathfrak m \delta_0 + (1-\mathfrak m) \delta_1$$
are of special relevance. 
In \cite{kang2021UniquenessCharacterizationLocal} the authors show in $d = 1$ if $\alpha \ge 2$ and $\gamma$ is large enough, then $\mu_{\frac 1 2}$ is unique $\mathfrak W_p$ minimiser for $p \in [1,\infty)$.
The $\mathfrak W_\infty$ minimisation when $d = 1$ is richer:
\begin{itemize}
    \item If $\gamma > \alpha > 2$, for every $\mathfrak m \in  (0, 1)$, $\mu_{\mathfrak m}$ is a ${\mathfrak W}_\infty$-strict local minimiser. 
    \item  If $\gamma > 3, \alpha = 2$, for every $\mathfrak m\in (\frac 1 {\gamma-1} , \frac{\gamma-2} {\gamma-1})$, $\mu_{\mathfrak m}$ is a ${\mathfrak W}_\infty$-strict local minimiser. 
    \item  If $\gamma > 3, \alpha = 2$,for every $\mathfrak m \in (0, \frac1 {\gamma-1} ] \cup [ \frac{\gamma-2}{\gamma-1} , 1)$, $\mu_{\mathfrak m}$ is a ${\mathfrak W}_\infty$-saddle point. 
    \item If $3 > \gamma > \alpha = 2$, for every $\mathfrak m\in (0, 1)$, $\mu_{\mathfrak m}$ is a ${\mathfrak W}_\infty$-saddle point. 
    \item  If $\gamma = 3, \alpha = 2$, for every $m\in (0, \frac 1 2 ) \cup (\frac 1 2 , 1)$, $\mu_{\mathfrak m}$ is a ${\mathfrak W}_\infty$-saddle point. 
    \item  If $\gamma = 3, \alpha = 2$, $\mu_{\frac 1 2}$ is a ${\mathfrak W}_\infty$-strict local minimiser.
\end{itemize}
When $d > 1$ the suitable extension of $\rho_{\mathfrak m}$ is a ``Dirac delta'' supported on the surface of a ball, see \cite{daviesClassifyingMinimumEnergyI,daviesClassifyingMinimumEnergyII}. 
The explicit shape of the $\mathfrak W_\infty$-minimiser when $\gamma \in (2,3)$ and $\alpha = 2$ is given in \cite{frank2022MinimizersOnedimensionalInteraction,frankMinimizersAggregationModel}.
\added[id=R1]{%
    There is also significant interest in the case $\gamma = 2, \alpha \in (-2,0)$, see \cite{carrillo2024GlobalMinimizersLarge} and the references therein.
}%

    \bigskip

\added[id=R1]{%
    There are several related problems with different choices of $V,W$ which have been studied over the last few years due to the modelling, we point the reader to \cite{carrillo2020EllipseLawKirchhoff,carrillo2021EquilibriumMeasureAnisotropic,mateu2023ExplicitMinimisersAnisotropic}, and the references therein.
}%

\subsubsection{The power cases \texorpdfstring{$U = U_m$}{U=Um}, \texorpdfstring{$V=0$}{V=0}, and \texorpdfstring{$W = \chi W_k$}{W=chiWk}}
\label{sec:minimisation of power cases}

The family of cases $U_m (\rho) = \frac{\rho^m}{m-1}$, $V = 0$, and
$W = \chi W_k$ has been studied in extensive detail.
We analyse the scaling of the second term of the energy to recover
$$
    \calW_k(\rho_\lambda) = 
        \begin{dcases} 
            \lambda^{-k} \calW(\rho_1)&\text{if } k \ne 0 \\
            \calW_k(\rho_1) - \log \lambda & \text{if } k = 0 ,
        \end{dcases}  
$$
The scaling of the energy is given by
$$
    \calF_{m,\chi,k}(\rho_\lambda) = \lambda^{-k} \left( \lambda^{d(m-1) + k} \calU_m (\rho_1) + \chi \calW(\rho_1) \right)
$$
The sign inside the parenthesis is crucial. As pointed out in \cite{calvezEquilibriaHomogeneousFunctionals2017,CarrilloHoffmannMaininiVolzone2018} the two terms are in balance when $d(m-1) = k$, and this leads to the critical value
\begin{equation} 
    \label{eq:ADE critical exponent}
    m_c = \frac{d-k}{d}
\end{equation}
called the \textit{fair-competition regime}. 
For $m > m_c$ we have the so-called \textit{diffusion-dominated regime} and for $m \in (0,m_c)$ the \textit{aggregation-dominated regime}.
Notice that $k > 0$ implies $m_c<1$ (fast-diffusion) whereas $k < 0$ implies $m_c>1$ (slow-diffusion).

\begin{remark}
    The case scaling argument can be repeated for $\calF = \calU_m + \chi\mathcal V_\lambda$, and we recover also the critical value $m_c = \frac{d-\lambda}{d}$. The critical value of \eqref{eq:Fokker PME} corresponds precisely to $\lambda = 2$.
\end{remark}

\begin{remark}
    \label{rem:free energy minimisation Keller PME}
    Notice that \eqref{eq:Keller PME} corresponds precisely to $\lambda = 2 - d$, so we recover $m_c = \frac{2d - 2}{d}$.
    The case $U = U_m$ with $m \ge 2$, $V = 0$ and $W$ radially decreasing was studied in \cite{bedrossian2011GlobalMinimizersFree}, where the main tool is radially decreasing rearrangement of $\rho$ (which will be presented in \Cref{sec:radial symmetry}). 
\end{remark}

\begin{remark}[Hardy-Littlewood-Sobolev] To show that the free energy functionals are bounded below, we recall the Hardy-Littlewood-Sobolev inequality $k \in (-d,0) $ and $\rho \in \mathcal P(\Rd)$
\begin{equation}
    \iint_{\Rd \times \Rd} |x-y|^\lambda \rho(x) \rho(y) \diff x \diff y \le C_{\rm{HLS}}(k,d) \int_\Rd \rho(x) ^{m_c} \diff x,
\end{equation}
where we recall the definition of $m_c$ given in \eqref{eq:ADE critical exponent}.
A proof can be found in \cite[Theorem 3.1]{calvezEquilibriaHomogeneousFunctionals2017}
The logarithmic version is that if $\log(1+|\cdot|^2) \rho \in L^1$ and $\rho \in \mathcal P(\Rd)$ then
\begin{equation}
    - \iint_{\Rd \times \Rd} \log |x-y| \rho(x) \rho(y) \diff x \diff y \le \tfrac 1 d \,\calU_1(\rho) + C_0 .
\end{equation}
\end{remark}

The fair-competition regime was discussed in \cite{calvezEquilibriaHomogeneousFunctionals2017}, the diffusion-dominated regime in \cite{CarrilloHoffmannMaininiVolzone2018}, and the aggregation-dominated regime in \cite{CarrilloDelgadinoPatacchini2019}. 

Since there is fair competition, the rescaling done for \eqref{eq:PME} suitably rescales the equation. There is, as usual, a new term $\diver(x \rho)$ coming from the time derivative. This new equation is itself the 2-Wasserstein gradient flow of the rescaled free energy
$$
    \calF_{\mathrm{resc}} \defeq \calF + \mathcal V_2,
$$
where we recall the definition \eqref{eq:free energy parameters}.
The main results are as follows
\begin{theorem}[Fair-competition case \cite{calvezEquilibriaHomogeneousFunctionals2017}]
    Let $m = m_c$ and $k \in (-d,0)$. Then
    \begin{enumerate}
       \item Stationary states satisfy $\calF(\hat \rho) = 0$.
       \item We have that 
       $$
        \calF (\rho) \ge \frac{1 - \chi C_{\rm{HLS}}(k,d)}{d(m-1)} \|\rho\|_{L^m}^m.
       $$
       Due to the previous item, we define $\chi_c = 1/C_{\rm{HLS}}(k,d)$.

       \item If $\chi = \chi_c$, 
       \added[id=O, comment=we have unified the grammar]{%
       then 
       }%
       there exists a global minimiser.
       \item If $\chi < \chi_c$, 
       \added[id=O]{%
       then 
       }%
        there exists a global minimiser for $\calF_{\mathrm{resc}}$, but not for $\calF$.
       \item If $\chi > \chi_c$, then $\mathcal F$ and $\calF_{\mathrm{resc}}$ are not bounded below.
    \end{enumerate}
\end{theorem}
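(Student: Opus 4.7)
The plan is to exploit two tools: the scaling invariance of $\calF$ at the fair-competition exponent $m=m_c$, and the HLS inequality recalled just above. For the mass-preserving dilation $\rho_\lambda(x) \defeq \lambda^d \rho(\lambda x)$, the identities $\calU_{m_c}(\rho_\lambda)=\lambda^{d(m_c-1)}\calU_{m_c}(\rho)=\lambda^{-k}\calU_{m_c}(\rho)$ and $\calW_k(\rho_\lambda)=\lambda^{-k}\calW_k(\rho)$ combine to $\calF(\rho_\lambda)=\lambda^{-k}\calF(\rho)$; since $k<0$, the exponent $-k=|k|>0$ controls both concentration ($\lambda\to\infty$) and spreading ($\lambda\to 0$).

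For (1), I would use the fact that $\hat\rho$ satisfies the mass-constrained Euler--Lagrange equation from \Cref{sec:Euler-Lagrange}, so the derivative of $\calF$ along any smooth curve of probability measures through $\hat\rho$ vanishes. Applying this to $\lambda\mapsto\rho_\lambda$ and differentiating the identity $\calF(\rho_\lambda)=\lambda^{-k}\calF(\hat\rho)$ at $\lambda=1$ gives $-k\,\calF(\hat\rho)=0$, and $k\neq 0$ forces $\calF(\hat\rho)=0$. For (2), I would apply HLS to the convolution term and, using $k<0$ to flip the direction, write
\begin{equation*}
    \calF(\rho) \;=\; \tfrac{1}{m-1}\|\rho\|_{L^{m_c}}^{m_c} + \tfrac{\chi}{2k}\iint_{\Rd\times\Rd}|x-y|^k\rho(x)\rho(y)\diff x\diff y \;\ge\; c(\chi)\,\|\rho\|_{L^{m_c}}^{m_c},
\end{equation*}
where $c(\chi)$ is affine in $\chi$ and vanishes exactly at $\chi_c\defeq 1/C_{\rm HLS}(k,d)$; this gives the stated bound after tracking the normalizations.

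Items (3)--(5) then follow from the sign of $c(\chi)$ combined with the scaling law. For (3), at $\chi=\chi_c$ the lower bound collapses to $\calF\ge 0$, and the HLS extremizers (the conformal profiles of the form \eqref{eq:steady state conformal fractional}) saturate it with $\calF=0$ after the natural normalization, providing a global minimizer. For (4) with $\chi<\chi_c$, the strict lower bound $\calF(\rho)\ge c(\chi)\|\rho\|_{L^{m_c}}^{m_c}>0$ rules out any minimizer of $\calF$, yet the spreading $\rho_\lambda$ with $\lambda\to 0^+$ gives $\calF(\rho_\lambda)=\lambda^{|k|}\calF(\rho)\to 0$, so the infimum is zero and not attained; meanwhile for $\calF_{\rm resc}=\calF+\mathcal V_2$ the extra term scales as $\mathcal V_2(\rho_\lambda)=\lambda^{-2}\mathcal V_2(\rho)$, which blows up under spreading, making minimizing sequences tight and bounded in $L^{m_c}$, so the direct method of the calculus of variations produces a minimizer. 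For (5), I would test $\calF$ at an HLS extremizer $\rho^\star$, where $\chi>\chi_c$ forces $\calF(\rho^\star)<0$; concentrating via $\lambda\to\infty$ then gives $\calF(\rho^\star_\lambda)=\lambda^{|k|}\calF(\rho^\star)\to -\infty$ while $\mathcal V_2(\rho^\star_\lambda)\to 0$, so both $\calF$ and $\calF_{\rm resc}$ are unbounded below.

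The main obstacle I foresee is the existence half of item (3): one must ensure HLS extremizers actually exist as probability measures in $L^{m_c}(\Rd)$ and saturate the inequality with constants matching those appearing in $\calF$. This typically relies on Lieb's classification of sharp HLS extremals together with a careful bookkeeping of normalizations so that a single properly normalized conformal profile yields exactly $\calF=0$. A secondary difficulty is the narrow lower-semicontinuity of $\calW_k$ along the extracted subsequence in (4), since $|x|^k$ is singular at the origin; this is controlled by an HLS-type bilinear estimate giving equi-integrability on tight sequences bounded in $L^{m_c}$.
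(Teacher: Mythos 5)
The paper states this theorem as a citation to \cite{calvezEquilibriaHomogeneousFunctionals2017} and does not reprove it; the surrounding text merely supplies the two tools — the scaling identity $\calF_{m,\chi,k}(\rho_\lambda)=\lambda^{-k}(\lambda^{d(m-1)+k}\calU_m(\rho_1)+\chi\calW(\rho_1))$ and the HLS inequality — that you use. Your sketch is a correct reconstruction of the argument in the reference: the virial/Pohozaev computation for item (1) (differentiating $\calF(\hat\rho_\lambda)=\lambda^{-k}\calF(\hat\rho)$ at $\lambda=1$ and using $\hat\rho\,\nabla\tfrac{\delta\calF}{\delta\rho}[\hat\rho]=0$ so that $\int\nabla\tfrac{\delta\calF}{\delta\rho}[\hat\rho]\cdot x\,\hat\rho\,\diff x=0$), the combination of HLS with the sign flip from $1/k<0$ for item (2), HLS extremizers for item (3), the dilation argument combined with $\mathcal V_2(\rho_\lambda)=\lambda^{-2}\mathcal V_2(\rho)$ plus the direct method for item (4), and unboundedness by concentration at an extremizer for item (5). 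You also correctly flag the two genuine technical hurdles: that sharp-HLS extremizers must be shown to lie in $L^1\cap L^{m_c}(\Rd)$ with compatible normalizations, and that narrow lower semicontinuity of $\calW_k$ along tight, $L^{m_c}$-bounded sequences needs an equi-integrability argument near the singularity.

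One bookkeeping caveat worth making explicit: with the paper's own normalization $\calU_m[\rho]=\frac{1}{m-1}\int\rho^m$ and HLS stated as $\iint|x-y|^k\rho\rho\le C_{\rm HLS}\int\rho^{m_c}$, your coefficient comes out as $c(\chi)=\frac{1}{m-1}+\frac{\chi C_{\rm HLS}}{2k}=\frac{-2d+\chi C_{\rm HLS}}{2k}$, which vanishes at $\chi=2d/C_{\rm HLS}$, not $1/C_{\rm HLS}$, and does not equal $\frac{1-\chi C_{\rm HLS}}{d(m-1)}$. The coefficient in the theorem as stated matches the convention of the cited paper, which normalizes differently (roughly, $\calU_m$ carries an extra $\frac1d$ and the HLS constant absorbs the $\frac{1}{2k}$ of $W_k$). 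So your phrase ``vanishes exactly at $\chi_c=1/C_{\rm HLS}$ after tracking the normalizations'' is right in spirit, but be aware the reconciliation is not purely cosmetic: it requires knowing exactly how $C_{\rm HLS}$ is normalized in the source, which the survey leaves implicit. This does not affect the logic of the qualitative conclusions in items (3)--(5), since those only depend on the sign of $c(\chi)$ relative to the single threshold where it crosses zero.
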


\begin{theorem}[Diffusion-dominated regime \cite{CarrilloHoffmannMaininiVolzone2018}]
    Let $m > m_c, d \ge 1, \chi > 0,$ and $k \in (-d,0)$. Then there exists a global minimiser $\hat \rho \in L^1_+(\Rd) \cap L^m(\Rd)$, and it is radially symmetric and compactly supported.
\end{theorem}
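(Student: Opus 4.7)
I would argue by the direct method of the calculus of variations on $\Prob(\Rd) \cap L^m(\Rd)$, after reducing via Riesz symmetrisation to radially symmetric non-increasing profiles. First, $\calF$ is bounded below: the HLS inequality gives $-\calW_k(\rho) \le C \int_\Rd \rho^{m_c}$, and $L^1$--$L^m$ interpolation (using $\|\rho\|_{L^1}=1$) yields $\int \rho^{m_c} \le C \|\rho\|_{L^m}^{m_c \theta}$ with $m_c \theta = m(m_c-1)/(m-1) < m$ because $m > m_c$; Young's inequality then absorbs $\chi |\calW_k(\rho)|$ into $\calU_m(\rho)$. For $\inf \calF < 0$ I use the scaling $\rho_\lambda(x) = \lambda^d \rho_1(\lambda x)$ already in the paper: $\calF(\rho_\lambda) = \lambda^{d(m-1)}\calU_m(\rho_1) + \chi \lambda^{|k|} \calW_k(\rho_1)$, and since $d(m-1) > |k|$ (equivalent to $m > m_c$) the negative interaction term dominates as $\lambda \to 0^+$.

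\textbf{Minimising sequence and symmetrisation.} Pick $\calF(\rho_n) \to \inf \calF$. Since $|x|^k$ for $k \in (-d,0)$ is already its own symmetric decreasing rearrangement, the Riesz rearrangement inequality yields $\iint |x-y|^k \rho_n(x) \rho_n(y) \diff x \diff y \le \iint |x-y|^k \rho_n^*(x) \rho_n^*(y) \diff x \diff y$; dividing by $2k < 0$ flips this into $\calW_k(\rho_n^*) \le \calW_k(\rho_n)$, while $\calU_m$ is invariant under rearrangement. Thus we may assume each $\rho_n = \rho_n^*$ is radially symmetric non-increasing. The bound from the previous step provides a uniform $L^m$-estimate, and the monotone radial structure together with unit mass gives the pointwise bound $\rho_n(|x|) \lesssim |x|^{-d}$.

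\textbf{Tightness, compactness, and lower semicontinuity.} The main obstacle is ruling out loss of mass at infinity, since there is no confining potential. If $(\rho_n)$ were not tight, a fixed positive fraction of its mass would sit in shells $\{|x| \sim R_n\}$ with $R_n \to \infty$; the decay $\rho_n(r) \lesssim r^{-d}$ together with the vanishing of $|x-y|^k$ at large separation (and HLS to control the on-diagonal singularity uniformly in $n$) forces $\calW_k(\rho_n) \to 0$, while $\calU_m \ge 0$, contradicting $\calF(\rho_n) \to \inf \calF < 0$. Hence $(\rho_n)$ is tight, so Prokhorov together with the uniform $L^m$-bound gives $\rho_n \rightharpoonup \widehat \rho$ narrowly and weakly in $L^m$, with $\|\widehat\rho\|_{L^1}=1$. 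Lower semicontinuity of $\calU_m$ follows from convexity of $s \mapsto s^m$; for $\calW_k$, I would truncate $|x-y|^k$ at scale $\varepsilon$, pass the bounded continuous part by narrow convergence, and estimate the singular remainder uniformly by HLS, then send $\varepsilon \to 0$. Hence $\calF(\widehat\rho) \le \inf \calF$, so $\widehat\rho$ is a minimiser, radially symmetric and non-increasing by inheritance from the sequence.

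\textbf{Compact support.} From \Cref{sec:Euler-Lagrange} the minimiser satisfies $\widehat\rho = \bigl(\tfrac{m-1}{m}(h - \chi W_k * \widehat\rho)\bigr)_+^{1/(m-1)}$ for some $h \in \R$, with the off-support inequality $\chi W_k * \widehat\rho \ge h$. A near/far decomposition of the convolution integral, using $\widehat\rho(r) \lesssim r^{-d}$, shows $W_k * \widehat\rho(x) \to 0^-$ as $|x| \to \infty$. If $\supp \widehat\rho = \Rd$, the EL equality at infinity forces either $\widehat\rho$ to tend to a positive constant (if $h > 0$) or a tail $\widehat\rho(r) \sim r^{k/(m-1)}$ (if $h = 0$), and the latter fails $L^1$ integrability because $d(m-1) > |k|$. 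Thus $\supp \widehat\rho \ne \Rd$, and combined with $W_k * \widehat\rho < 0$ the off-support condition forces $h < 0$. Then for $|x|$ large enough so that $|\chi W_k * \widehat\rho(x)| < |h|$, one has $h - \chi W_k * \widehat\rho(x) < 0$, so $\widehat\rho$ vanishes outside a ball. The main obstacle throughout is the tightness step: without a confining potential, control over mass at infinity rests entirely on the quantitative HLS balance between aggregation and diffusion, which is precisely what $m > m_c$ provides.
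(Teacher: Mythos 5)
Your overall strategy — direct method of the calculus of variations, Riesz symmetrisation, HLS plus $L^1$--$L^m$ interpolation for the lower bound, and the Euler--Lagrange inequality together with the decay of $W_k*\widehat\rho$ for compact support — is the right route and is in the spirit of the cited reference. The scaling argument giving $\inf\calF<0$, the rearrangement step, the truncation/HLS argument for continuity of $\calW_k$ along a narrowly and $L^m$-weakly convergent sequence, and the compact-support argument are all sound.

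The genuine gap is the tightness step. Your claim that a failure of tightness of the radially non-increasing minimising sequence \emph{forces $\calW_k(\rho_n)\to0$} is false: it only treats the vanishing alternative, not dichotomy. For radially non-increasing probability densities uniformly bounded in $L^m$, tightness can fail while a fixed fraction of the mass remains near the origin. For instance $\rho_n=\tfrac12\rho_0+\tfrac12 n^{-d}\sigma(\cdot/n)$, with $\rho_0,\sigma$ fixed radially non-increasing probability densities, is radially non-increasing, has unit mass, is bounded in $L^m$, is not tight, and yet $\calW_k(\rho_n)\to\tfrac14\calW_k(\rho_0)\ne0$. In general, along a non-tight minimising sequence you obtain a Helly/Fatou limit $\widehat\rho$ with $\int_\Rd\widehat\rho=\beta<1$ and $\calW_k(\rho_n)\to\calW_k(\widehat\rho)\ne 0$; lower semicontinuity then only yields $\calF(\widehat\rho)\le\inf_{\Prob}\calF$, which is not yet a contradiction. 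What is missing is a strict comparison between the constrained infima $I(\beta)\defeq\inf\{\calF(\rho):\rho\ge0,\int\rho=\beta\}$: combining the mass rescaling $\rho\mapsto t\rho$ with the dilation $\rho\mapsto\lambda^d\rho(\lambda\,\cdot)$ and optimising in $\lambda$ gives, with $a=d(m-1)$ and $b=|k|$,
\[
    I(\beta)=\beta^{\frac{2a-mb}{a-b}}\,I(1),
\]
and the exponent is strictly positive in the diffusion-dominated regime: for $m\le2$ use $|k|<d(m-1)$ to get $2a-mb>d(m-1)(2-m)\ge0$, and for $m\ge2$ use $|k|<d$ to get $2a-mb>d(m-2)\ge0$ (strict in every case). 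Since $I(1)<0$, this gives $I(\beta)>I(1)$ for $\beta<1$ and forces $\beta=1$. You need to supply this comparison (or an equivalent dichotomy-excluding argument, such as showing a minimising sequence may be taken supported in a fixed ball) to close the tightness step.
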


The work \cite{CarrilloHittmeirVolzoneYao2019} also contains contributions towards existence of minimisers in the diffusion-\-do\-mi\-na\-ted range. However, the key contribution is the radial symmetry as described in \Cref{sec:radial symmetry}.
More or less in parallel, work was done on the aggregation-dominated regime.

\begin{theorem}[Aggregation-dominated regime \cite{CarrilloDelgadinoPatacchini2019}]
    Assume that $W \in L^1_{loc} (\Rd)$ and $W(x) = W(-x)$. 

    \begin{enumerate}
        \item (Linear diffusion) Assume, furthermore, that $W \in L^\infty(\Rd \setminus B_\delta)$. Then, for any $\ee > 0$ the energy $\ee \calU_1 + \calW$ does not admit any $\Wass_p$-local minimisers. Furthermore, if $W$ is Lipschitz continuous, then it does not admit any critical points. 
        
        \item (Fast diffusion) If $m < \frac{d}{d+k}$ then the free energy $\ee U_m + \calW_k$ is not bounded below in $\mathcal P(\Rd) \cap L^\infty (\Rd)$.
        
        \item (Slow diffusion) If $m > 1$ and $k \in ((1-m)d , 0)$ then the energy
        $$
            \cal F(\rho) = \begin{dcases}
                \calU_m(\rho) + \calW_k (\rho) & \text{if } \rho \in L^m (\Rd) \\
                +\infty & \text{if } \rho \in \mathcal P(\Rd) \setminus L^m (\Rd)
            \end{dcases}
        $$  
        has a global minimiser.
    \end{enumerate}
\end{theorem}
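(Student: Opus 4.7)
The plan is to exploit the unboundedness of Shannon entropy on spread-out measures while keeping the aggregation term controlled on well-separated configurations. Given any candidate $\hat\rho \in \Prob(\Rd)\cap L^1(\Rd)$, consider the perturbation $\hat\rho_\eta = (1-\eta)\hat\rho + \eta\phi_R$, where $\phi_R = |B_R|^{-1}\mathbf 1_{B_R(y_0)}$ and $y_0$ is chosen far from $\supp\hat\rho$. A disjoint-support computation gives
\[
    \calU_1(\hat\rho_\eta) - \calU_1(\hat\rho) \sim -\eta\, d\log R \qquad \text{as } R\to\infty,
\]
while the hypothesis $W\in L^\infty(\Rd\setminus B_\delta)$ keeps $|\calW(\hat\rho_\eta)-\calW(\hat\rho)|\le C\eta$ uniformly in $R$. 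Fixing $R$ large enough that $\ee\, d\log R$ beats $C$ and letting $\eta\to 0$ yields $\Wass_p(\hat\rho,\hat\rho_\eta)\le R\eta^{1/p}\to 0$ together with a strict drop $\ee\calU_1(\hat\rho_\eta) + \calW(\hat\rho_\eta) < \ee\calU_1(\hat\rho) + \calW(\hat\rho)$; hence $\hat\rho$ cannot be a $\Wass_p$-local minimiser. For the stronger ``no critical points'' statement when $W$ is additionally Lipschitz, the Euler--Lagrange equation \eqref{eq:Euler-Lagrange} reduces to $\hat\rho = \exp((h - 1 - W*\hat\rho)/\ee)$ on all of $\Rd$; since $W*\hat\rho$ is then bounded and continuous, $\hat\rho$ is bounded below by a positive constant everywhere, contradicting $\int_\Rd\hat\rho = 1$.

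\textbf{Part 2: fast diffusion.} The plan is a scaling argument in $\Prob(\Rd)\cap L^\infty(\Rd)$. Using the mass-preserving dilation $\rho_\lambda(x)=\lambda^d\rho(\lambda x)$ one obtains
\[
    \calF(\rho_\lambda) = \ee\, \lambda^{d(m-1)}\calU_m(\rho) + \lambda^{-k}\calW_k(\rho),
\]
so that in the sub-range $m<m_c=(d-k)/d$ the choice $\lambda\to\infty$ already sends the negative $\calW_k$-term to $-\infty$ faster than the positive $\calU_m$-term grows, giving $\calF(\rho_\lambda)\to-\infty$. In the remaining range $m_c\le m<d/(d+k)$ pure dilation is inconclusive, so one replaces it by a geometric construction $\rho=c\,\mathbf 1_A$ with $A$ of fixed volume $1/c$ (keeping $\|\rho\|_\infty=c$ bounded) but chosen with small intrinsic length scale (thin tubes or spherical shells), using the singularity of $|x-y|^k$ near the diagonal to drive $\calW_k$ to $-\infty$ while $\calU_m=c^{m-1}/(m-1)$ stays bounded; the sharp threshold $d/(d+k)$ emerges as the optimum of such $L^\infty$-constrained shapes.

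\textbf{Part 3: slow diffusion, global minimiser.} The strategy is the direct method of the calculus of variations. First, $\calF=\calU_m+\calW_k$ is bounded below by combining the Hardy--Littlewood--Sobolev inequality $|\calW_k(\rho)|\le C\|\rho\|_{L^{2d/(2d+k)}}^2$ with interpolation between $\|\rho\|_{L^1}=1$ and $\|\rho\|_{L^m}$; the hypothesis $k>(1-m)d$ is exactly what makes the resulting power of $\|\rho\|_{L^m}$ strictly less than $m$, so the negative aggregation is absorbed by the positive diffusion. Take a minimising sequence $\{\rho_n\}\subset\Prob(\Rd)\cap L^m(\Rd)$. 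By Riesz's rearrangement inequality, symmetric decreasing rearrangement leaves $\calU_m$ invariant and strictly decreases $\calW_k$ (since $-|x|^k$ is radially decreasing for $k<0$), so we may assume each $\rho_n$ is radial and non-increasing. Radial monotonicity together with the $L^m$ bound coming from $\calU_m$ yields the pointwise decay $\rho_n(x)\le C|x|^{-d/m}$, which combined with $\int\rho_n=1$ produces tightness. Prokhorov then extracts a narrowly convergent subsequence $\rho_n\rightharpoonup\hat\rho\in\Prob(\Rd)$, and weak lower semicontinuity (Fatou for convex $U_m$, Fatou for $-W_k\ge 0$ paired with narrow convergence of $\rho_n\otimes\rho_n$) gives $\calF(\hat\rho)\le\liminf\calF(\rho_n)$, so $\hat\rho$ is a global minimiser.

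\textbf{Main obstacles.} Part 1 is a targeted Wasserstein perturbation. The delicate points will be: in Part 2, finding the precise $L^\infty$-admissible family that reaches the sharp threshold $d/(d+k)$ rather than just the weaker dilation threshold $m_c$; in Part 3, establishing tightness in a purely attractive setting with no confining potential---this is exactly where radial symmetrisation and the $L^m$ bound extracted from $\calU_m$ are essential.
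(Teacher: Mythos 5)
The survey states this theorem with a citation to \cite{CarrilloDelgadinoPatacchini2019} and gives no proof, so there is no in-paper argument to compare against; I assess your proposal on its own merits. Part 1 is essentially sound. Minor remarks: you do not actually need $y_0$ far from $\supp\hat\rho$ (which is impossible if $\hat\rho$ has full support); taking $y_0$ fixed works, because $\iint W(x-y)\hat\rho(x)\phi_R(y)$ is controlled on $\{|x-y|\ge\delta\}$ by $\|W\|_{L^\infty(\Rd\setminus B_\delta)}$ and on $\{|x-y|<\delta\}$ by $|B_R|^{-1}\|W\|_{L^1(B_\delta)}$. The ``disjoint-support'' equality for $\calU_1$ should be replaced by the Jensen inequality $\calU_1((1-\eta)\hat\rho+\eta\phi_R)\le(1-\eta)\calU_1(\hat\rho)+\eta\calU_1(\phi_R)$, which holds for arbitrary supports and is the needed direction.

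Part 2 contains sign errors that lead to a wrong construction. For $m<1$, $\calU_m(\rho)=\frac{1}{m-1}\int\rho^m$ is \emph{negative}, and the only range where the threshold $m<\frac{d}{d+k}$ is a genuine constraint is $k>0$ (for $k<0$ the threshold exceeds $1$ and both energy terms are negative, so dilation $\lambda\to 0$ immediately works). But for $k>0$, $\calW_k>0$ and $|x-y|^k$ \emph{vanishes}, rather than blows up, on the diagonal, so thin tubes or shells cannot drive $\calW_k\to-\infty$. Also, the dilation $\lambda\to\infty$ you invoke concentrates mass and exits $L^\infty$, so it is not admissible; the admissible dilation is $\lambda\to 0$, which only handles $m<m_c=\frac{d-k}{d}$, not $[m_c,\frac{d}{d+k})$. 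The mechanism in that remaining window is different: keep a fixed bump and detach a small fraction $a_n\to 0$ of mass into a very spread-out profile $\phi_n=n^{-d}\phi(\cdot/n)$ centred at distance $\sim n$; then the diffusion part gains $a_n^m n^{d(1-m)}\calU_m(\phi)\to-\infty$ as long as $a_n\gg n^{-d(1-m)/m}$, while the interaction cross term is $O(a_n n^k)$ and stays bounded as long as $a_n\lesssim n^{-k}$. Both constraints are simultaneously satisfiable precisely when $d(1-m)/m>k$, i.e.\ $m<\frac{d}{d+k}$, which is where the sharp threshold comes from.

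Part 3 also has two genuine gaps. The lower bound via HLS and interpolation is fine (one checks $p=\frac{2d}{2d+k}<m_c<m$ in the admissible range, so the interpolation exponent lies in $(0,1)$ and the resulting power $-km/(d(m-1))$ of $\|\rho\|_{L^m}$ is strictly below $m$ exactly when $k>(1-m)d$). However: (i) the decay $\rho_n(x)\le C|x|^{-d/m}$ does \emph{not} imply tightness. Take $\rho_n=\delta\mathbf 1_{B_1}+\ee_n\mathbf 1_{B_{R_n}\setminus B_1}$ with $\ee_n\sim R_n^{-d}$, $R_n\to\infty$: it is radial, non-increasing, of unit mass, has bounded $L^m$ norm, and satisfies your decay, yet it is not tight. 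Tightness must use the strict negativity of the infimum (forcing $-\calW_k(\rho_n)\ge c>0$) together with a dichotomy/concentration-compactness step or a \Cref{lem:W controls tails}-type tail estimate; without this the argument stalls. (ii) The semicontinuity is in the wrong direction: $|x-y|^k$ with $k<0$ is non-negative and lower semicontinuous, so Fatou/Portmanteau under $\rho_n\otimes\rho_n\rightharpoonup\hat\rho\otimes\hat\rho$ gives $\liminf\iint|x-y|^k\rho_n\rho_n\ge\iint|x-y|^k\hat\rho\hat\rho$, and since $\calW_k=\frac{1}{2k}\iint(\cdots)$ with $k<0$ this means $\calW_k$ is \emph{upper} semicontinuous, which cannot be combined with lsc of $\calU_m$ to deduce $\calF(\hat\rho)\le\liminf\calF(\rho_n)$. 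What is needed instead is continuity of $\calW_k$ along a bounded-in-$L^m$ tight sequence, obtained by splitting the double integral near and away from the diagonal and using HLS with a cutoff to show the near-diagonal part is uniformly small.
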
 
The authors also provide a condition for sharp existence of global minimisers. 
In \cite{carrillo2019ReverseHardyLittlewood} the authors introduce a reversed HLS inequality to deal with the aggre\-gation-domi\-nated regime.

\begin{theorem}[Reversed HLS \cite{carrillo2019ReverseHardyLittlewood}]
    \begin{enumerate}
        \item If $k > 0$ and $m \in ( \frac{d}{d+k} ,1)$ then there exist a constant $C \in \mathbb R$ such that
        \begin{equation} 
            \label{eq:reversed HLS}
        \dfrac{ \iint_{\Rd \times \Rd} f(x) |x-y|^k f(y) \diff x \diff y}{\left( \int_\Rd f(x) \diff x \right)^{\alpha} \left( \int_\Rd f(x)^m \diff x \right)^{\frac{2-\alpha}{m}}} \ge C  , 
        \end{equation} 
        where
        \[
        \alpha = \frac{2d-m(2d+k)}{d(1-m)} .
        \]
        \item The equality is in \eqref{eq:reversed HLS} achieved if $d=1,2$ or $d = 3$ and $m \ge \min\{ \frac{d-2}{d} , \frac{2d}{2d+k}   \} $.
        \item If $m \in ( \frac{d}{d+k} ,1)$ and either $\lambda \in [2,4]$ or, $\lambda \ge 1$ and $m \ge \frac{d-1}{d}$ then the minimiser of \eqref{eq:reversed HLS} is unique (up to translation, dilation, or multiplication). 
    \end{enumerate}
\end{theorem}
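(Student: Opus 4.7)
\emph{Plan.} I would treat the three parts in order, along the general lines of the Hardy--Littlewood--Sobolev theory adapted to the \emph{increasing} kernel $|z|^k$ with $k>0$. The common first step is scaling: a direct computation shows that the numerator of \eqref{eq:reversed HLS} scales as $\lambda^2\mu^{-k}$ under $f\mapsto \lambda f$ and $f\mapsto \mu^d f(\mu\,\cdot)$, while the denominator scales as $\lambda^2\mu^{d(m-1)(2-\alpha)/m}$; the stated formula for $\alpha$ is exactly what matches the $\mu$-exponents, so the ratio is scale invariant and one may normalise $\int f=\int f^m=1$.

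For item~(1), although $|z|^k$ is radially increasing, the identity $|z|^k=\lim_{M\to\infty}\bigl(M-(M-|z|^k)_+\bigr)$ combined with Riesz rearrangement applied to the radially decreasing truncations $(M-|z|^k)_+$ yields the one-sided comparison
\[
    \iint f^*(x)\,|x-y|^k\,f^*(y)\,dx\,dy \ \le\ \iint f(x)\,|x-y|^k\,f(y)\,dx\,dy,
\]
while the denominator of \eqref{eq:reversed HLS} is invariant under rearrangement. Hence it suffices to prove the lower bound for radial decreasing $f$ in the normalised class. On that class, split
\[
    \iint f(x)|x-y|^k f(y)\,dx\,dy \ \ge\ R^k\bigl(1-I_R(f)\bigr), \qquad I_R(f)\defeq \iint_{|x-y|\le R} f(x)f(y)\,dx\,dy,
\]
and bound $I_R(f)$ by combining the distribution-function estimates $|\{f>t\}|\le 1/t$ and $|\{f>t\}|\le t^{-m}$ coming from the two normalisations. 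Optimising in $R$ produces a strictly positive constant $C$.

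For item~(2), apply the direct method to a minimising sequence in the normalised class, taken radial decreasing about the origin by the monotonicity just used. The crux is concentration--compactness in the style of P.-L.\ Lions: \emph{vanishing} (the profiles $f_n$ spreading out) is excluded because it forces $\iint|x-y|^k f_n f_n\to\infty$, contradicting that $(f_n)$ is minimising; \emph{dichotomy} (splitting into two widely separated pieces) is excluded because the interaction energy is strictly superadditive under such a split. The remaining compactness, together with lower semicontinuity of the numerator along narrow convergence, produces an extremiser in $L^1\cap L^m \cap \Prob(\Rd)$. The dimensional restrictions in the statement ensure the interpolation estimates and compact embeddings needed at this step.

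For item~(3), any extremiser satisfies the Euler--Lagrange equation $2\bigl(|\cdot|^k*f\bigr)(x) = a + b\, f(x)^{m-1}$ a.e.\ on $\{f>0\}$, for suitable Lagrange multipliers $a,b\in\mathbb R$. Setting $u\defeq |\cdot|^k*f$ turns this into a nonlinear integral equation of convolution type, on which the moving plane (equivalently, moving sphere) method applies: the hypotheses $k\in[2,4]$, or $k\ge 1$ with $m\ge (d-1)/d$, are precisely the regimes for which this method closes, giving radial symmetry about a common centre. Uniqueness then reduces to a one-parameter ODE problem on the half-line, consistent with the three symmetries listed in the statement. The main obstacle I anticipate lies in step~(2): the interplay between the $L^1$ and $L^m$ (with $m<1$) constraints and the superlinear growth of $|z|^k$ generates several competing scales, and it is the sharp exclusion of dichotomy that determines the admissible dimensions and exponents in items~(2)--(3).
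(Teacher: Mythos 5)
The survey does not prove this theorem: it is quoted directly from \cite{carrillo2019ReverseHardyLittlewood} (as the bracketed attribution indicates), and then reformulated as a corollary in the language of free energies. There is therefore no in-paper proof to compare against; what follows is a comparison with the published argument. Your scaling computation is correct, the reduction to radially decreasing profiles via Riesz rearrangement applied to the truncations $(M-|z|^k)_+$ is precisely the right device for a radially \emph{increasing} kernel, and the direct-method skeleton in item (2) is of the right type.

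Two steps would not go through as written. For item (1), the claim that $I_R$ is controlled by ``the distribution-function estimates $|\{f>t\}|\le 1/t$ and $|\{f>t\}|\le t^{-m}$'' is the gap. Since $m<1$, the second estimate is \emph{weaker} than the first for $t\ge 1$; the $L^m$ normalisation controls the tails (small values) of $f$ rather than its concentration. Highly concentrated radial profiles are compatible with both $\int f=1$ and $\int f^m=1$, and for such profiles $I_R\to 1$ for any fixed $R$, so ``optimising in $R$'' is exactly where the work lies and cannot be dispatched by these two bounds. The published proof takes a different route: after symmetrisation one first establishes a moment comparison of the form $\iint f(x)|x-y|^k f(y)\,\diff x\,\diff y \ge c\int |x|^k f(x)\,\diff x$ for suitably centred $f$, and then proves the one-body reversed interpolation $\int |x|^k f \ge C\big(\int f^m\big)^{(2-\alpha)/m}$ by a layer-cake argument tailored to radially decreasing profiles; your splitting at a single radius $R$ is not how the positive constant is extracted.

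For item (3), the moving-plane method is not the mechanism, and would be poorly suited here: $|x-y|^k$ is radially \emph{increasing}, the opposite monotonicity from where moving-plane/sliding arguments for nonlocal equations normally close. In fact radial symmetry of extremisers already comes for free from the rearrangement step of item (1), since $f^\star$ is again an extremiser; the genuinely hard content is uniqueness among radially decreasing profiles. The two hypothesis sets correspond to two different arguments: the range $k\ge 1$ and $m\ge (d-1)/d$ (the $\lambda$ in the statement is a typo for $k$) is exactly where $\calU_m+\calW_k$ is geodesically convex in $\Wass_2$ (McCann's condition, cf.\ \Cref{sec:convexity}), and uniqueness is a consequence of displacement convexity of the associated free energy; the range $k\in[2,4]$ is handled separately by an explicit analysis of the radial Euler--Lagrange equation that exploits the special algebraic structure of the kernels $|z|^2$ and $|z|^4$. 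Neither is a ``one-parameter ODE on the half-line'' argument of the standard shooting type.
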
 

Part of the results in \cite{carrillo2019ReverseHardyLittlewood} appeared first as a preprint \cite{CarrilloDelgadino2018} written in terms of \eqref{eq:ADE} instead of functional inequalities. We present the results as stated there as a corollary of the published paper.
\begin{corollary}
    \label{cor:Carrillo Delgadino}
    Let $m \in( \frac{d}{d+k} , 1)$ and $k > 0$. Then
    \begin{enumerate}
        \item there exists a unique $\widehat \mu$ that minimises the free energy $\widetilde \calU_m (\widehat \mu) + \chi \calW (\widehat \mu)$, and they coincide with minimisers of \eqref{eq:reversed HLS}.
        
        \item It is of the form \eqref{eq:steady state with concentrated part}
        for some $\widehat \rho \in L^1 $. 

        \item For $m > \frac{d-1}{d}$ and $k \ge 1$ the functional is geodesically convex in $\Wass_2$, and hence, the minimiser is unique.
    \end{enumerate}
\end{corollary}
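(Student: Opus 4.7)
The plan is to prove the three claims by leveraging the reversed Hardy--Littlewood--Sobolev (HLS) inequality stated just above for parts (i)--(ii), and McCann's condition \eqref{eq:McCann} for part (iii). Throughout, I will work with the extension $\widetilde\calF$ discussed in the previous subsection, noting that since $m<1$ the diffusion functional is sublinear, so $\widetilde\calU_m(\mu)=\calU_m(\mu_{ac})$, and since $W_k(0)=0$ for $k>0$ any Dirac mass at the origin pays no self-interaction cost.

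\emph{Step 1 (one-variable lower bound).} First I would restrict to $\rho\in L^1_+(\Rd)\cap\Prob(\Rd)$ and apply reversed HLS to estimate the interaction term in $\calF$ from below by a power of $A\defeq\|\rho\|_m^m$. Writing
\begin{equation*}
\calF(\rho)=-\frac{A}{1-m}+\frac{\chi}{2k}\iint|x-y|^k\rho(x)\rho(y)\diff x\diff y \ \ge\ -\frac{A}{1-m}+\frac{\chi C}{2k}A^{(2-\alpha)/m}\defeq\Psi(A),
\end{equation*}
I will check directly from the formula $\alpha=\tfrac{2d-m(2d+k)}{d(1-m)}$ that $m>\tfrac{d}{d+k}$ forces the exponent $(2-\alpha)/m$ to exceed $1$. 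Then $\Psi$ is coercive as $A\to\infty$ and negative for small $A>0$, so it attains its infimum at a unique $A^*>0$, yielding a finite lower bound for $\calF$ on probability densities.

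\emph{Step 2 (equality case and reduction to HLS extremizers).} Equality $\calF(\rho)=\Psi(A^*)$ forces both $\|\rho\|_m^m=A^*$ and equality in reversed HLS. The uniqueness clause of the reversed HLS theorem (valid in the prescribed range of $m$ and $k$) then pins down $\rho$ up to translation, dilation, and scalar multiplication; the constraints $\|\rho\|_1=1$ and $\|\rho\|_m^m=A^*$ fix the multiplicative and dilation parameters, while translation invariance of $\calF$ is broken by centering at the origin. This furnishes either a unique absolutely continuous minimizer, or shows no such minimizer exists if the HLS extremizer cannot be normalized to unit mass at the required $L^m$-scale.

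\emph{Step 3 (appearance of the Dirac part).} The key observation is that the HLS extremizer, rescaled to have $\|\cdot\|_m^m=A^*$, generically carries total mass $\mathfrak m^*<1$. I will then enlarge the admissible class to $\mu=\rho+(1-\|\rho\|_{L^1})\delta_0$ and compute
\begin{equation*}
\widetilde\calF(\mu)=\calU_m(\rho)+\frac{\chi}{2k}\iint|x-y|^k\rho(x)\rho(y)\diff x\diff y+\chi(1-\|\rho\|_{L^1})\!\int W_k(x)\rho(x)\diff x.
\end{equation*}
Applying Step 1 to $\rho$ scaled to unit mass and optimizing over the mass split gives a lower bound matching $\Psi(A^*)$; radial symmetrization and rearrangement around $0$ (combined with $W_k$ being radially increasing) then show that placing the residual mass as a Dirac at $x=0$, with $\rho$ centered at the origin, is strictly optimal. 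This identifies $\widehat\mu=\widehat\rho+(1-\|\widehat\rho\|_{L^1})\delta_0$ as in \eqref{eq:steady state with concentrated part} and confirms coincidence with the reversed HLS extremizers.

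\emph{Step 4 (part (iii): displacement convexity).} Under the stronger assumptions $m>\tfrac{d-1}{d}$ and $k\ge 1$, $\calU_m$ satisfies McCann's condition \eqref{eq:McCann}, and $W_k(x)=|x|^k/k$ is convex and even, so $\calW_k$ is displacement convex along $\Wass_2$-geodesics. Hence $\calF$ is displacement convex, and the minimizer is unique by strict convexity on any geodesic passing through two distinct minimizers (a standard argument extended to the lower-semicontinuous completion $\widetilde\calF$ by approximation with absolutely continuous measures).

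\emph{Main obstacle.} The hardest step is Step 3: justifying that the ``missing'' mass $1-\mathfrak m^*$ concentrates as a single Dirac at $0$ rather than, for example, spreading out or splitting into multiple atoms. The argument requires careful use of reversed HLS equality cases combined with the monotonicity of $W_k$ in $|x|$, and must handle the interaction between the $\delta_0$ and the regular part $\widehat\rho$ in a way that is consistent with the extended functional $\widetilde\calF$. This is also where all three items merge: uniqueness of the HLS extremizer (for part (i)), the specific form of the minimizer (for part (ii)), and the displacement-convex override (for part (iii)) all ultimately rely on a sharp characterization of the equality case in reversed HLS.
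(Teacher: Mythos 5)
The paper does not prove this corollary; it explicitly states that the result is taken from the preprint \cite{CarrilloDelgadino2018} and the reversed HLS paper \cite{carrillo2019ReverseHardyLittlewood}, remarking only that the Euler--Lagrange computation for the extended functional ``follows the same philosophy'' as \Cref{sec:first variation probability} but is more involved. So there is no in-text argument to compare against, and I will instead evaluate your plan on its own merits.

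Your Step 1 is sound: with $\int\rho=1$, reversed HLS gives $\calF(\rho)\ge -\tfrac{A}{1-m}+\tfrac{\chi C}{2k}A^{\beta}$ with $A=\int\rho^m$ and $\beta=(2-\alpha)/m=\tfrac{k}{d(1-m)}$; this exponent exceeds $1$ exactly when $m>\tfrac{d-k}{d}$, which indeed follows from $m>\tfrac{d}{d+k}$ for $k>0$, so $\Psi$ is coercive, negative near zero, and minimized at a unique $A^*>0$. Step 2, however, has a genuine flaw in the dichotomy you state. If the reversed HLS infimum is attained by an $L^1\cap L^m$ function $f$, you always have two free parameters (amplitude $\lambda$ and dilation $\mu$) against two constraints $\|g\|_1=1$ and $\|g\|_m^m=A^*$, and for $m<1$ the system $\lambda\mu^{-d}\!\int\! f=1$, $\lambda^m\mu^{-d}\!\int\! f^m=A^*$ always has a positive solution — so ``cannot be normalized to unit mass at the required $L^m$-scale'' never happens. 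The true dichotomy is whether the reversed HLS infimum is attained by a function at all; the published theorem only asserts attainment for $d=1,2$ or for $d=3$ under additional conditions, and when it is not attained, minimizing sequences for $\widetilde\calF$ concentrate part of their mass, which is where the Dirac enters.

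Step 3 is, as you correctly flag, the main gap, and your sketch does not close it. Neither ``optimizing over the mass split'' nor ``radial symmetrization around $0$'' currently explains why the singular part must be a single atom rather than several atoms or a more diffuse singular measure, nor why it must sit at the mass centre of $\widehat\rho$. Rearrangement inequalities for measures with atoms are delicate, and the ansatz you are implicitly assuming is precisely what needs to be proved; the route taken in the literature derives Euler--Lagrange conditions for the extended $\widetilde\calF$ on $\Prob(\Rd)$ and then argues about the structure of the singular part from those. Step 4 is correct in outline: $m>\tfrac{d-1}{d}$ gives McCann's condition for $\calU_m$ and $k\ge 1$ makes $W_k$ convex, so $\widetilde\calF$ is geodesically convex. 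But note this is $0$-convexity, not $\lambda$-convexity with $\lambda>0$, so you should say explicitly where strictness comes from (typically from $\calU_m$ on the absolutely continuous part), and be aware that translations are a symmetry, so ``unique'' must be read up to translation or after centering.
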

The authors also present the suitable Euler-Lagrange equation for $\widehat \mu$. The computation is more involved that \eqref{sec:first variation probability} that follows the same philosophy. 

\bigskip

The case $\lambda = 4$ was studied in full detail in \cite{CarrilloDelgadinoFrankLewin2020}. Here the authors characterise precisely the dimensions $d$ and index $m$ for 
\added[id=R1]{%
has a Dirac delta, i.e., it
is \eqref{eq:steady state with concentrated part} with $\|\widehat \rho\|_{L^1} < 1$.
}%

\begin{remark} 
A particular amount of work has been devoted to the case $V = 0$ whereas $V \ne 0$ raised much lower interest. The extension of the results is more or less direct, see \cite{Carrillo+GC+Vazquez2022JMPA,Carrillo+Fernandez-Jimenez+GC2023}.
\end{remark} 

\begin{remark}
    \label{rem:scaling free energy KS}
    Keller-Segel corresponds precisely to the logarithmic cases $d = 2$, $m = 1$, and $W = \frac {-1} {2 \pi} W_0 $ which yield (assuming $\| \rho \|_{L^1} = 1$)
    $$
    \calF [ \rho_\lambda] = \calF(\rho_1) + \left(2 -  \frac{ \chi}{4 \pi}\right) \log \lambda.
    $$
    This leads to the critical value of $\chi^* = 8 \pi$ or, equivalently recalling the change of variable in \Cref{sec:Keller-Segel}, critical mass $M^* = 8 \pi$. 
\end{remark}

	\subsection{Radial symmetry}
\label{sec:radial symmetry}
There are several approaches to prove that, when $V, W$ are radially symmetric then the minimisers must be radially symmetric. 
One approach is to use the associated stationary problem, and Alexandrov reflexions. 

A different approach is to prove that we can find radially symmetric minimising sequences. 
For this, we can take advantage of the rearrangement theory (see \cite{Talenti2016} and the references therein).
First, let us ``re-arrange'' a set. If $A \subset \Rd$ we define the rearrangement of $A$ as the ball of radius $0$
$$  A^\star \defeq B(0,R) \qquad \text{ such that } |A^\star| = |A|.$$
We define the \textit{radially decreasing re-arrangement} of a non-negative function $f$
as the function $f^\star$ such that
$$
    \{ x : f^\star (x) \ge t \} = \{ x : f(x) \ge t \}^\star.
$$
We can write $f$ in terms of its level sets via the \textit{layer-cake representation}
$$
    f(x) = \int_0^{f(x)} \diff t = \int_0^\infty 1_{[0,f(x)]} (t) \diff t = \int_0^\infty 1_{L_t^+(f)} (x) \diff t,
$$
where $1_A$ is the indicator function of $A$, and we have introduced the super-level set notation
$$L_t^+ (f) \defeq \{ x : f(x) \ge t \}.$$
Hence, we can simply define
$$
    f^\star(x) \defeq \int_0^\infty 1_{L_t^+(f)^\star} (x) \diff t.
$$

It is not difficult to check that 
\added[id=R1,comment=the hypothesis of {$f,g,h$} have been precised. We also give the names of the formulas]{%
if $f,g,h$ are non-negative functions vanishing at infinity we have the conservation of all $L^p$ norms, which can be more generally written as
\[
    \int_\Rd U(f^\star(x)) \diff x = \int_\Rd U(f(x)) \diff x, \qquad \text{for all }U\text{ convex}.
\]
{Furthermore, we have  Hardy-Littlewood's inequality}
\[
    \int_\Rd f^\star(x) g^\star(x) \diff x \ge \int_\Rd f(x) g(x) \diff x,
\]
{and Riesz's inequality}
\[
    \iint_{\Rd\times\Rd}  f^\star(x) g^\star(y) h^\star(x-y) \diff x \diff y \ge \iint_{\Rd \times \Rd}  f(x) g(y) h(x-y) \diff x \diff y.
\]
    By reverting these inequalities, if $V, W$ are non-negative and radially increasing we have that
    $$
        \calF[\rho^\star] \le \calF[\rho].
    $$
    Hence, the expected behaviour is that if $\hat \rho$ is a minimiser, 
    then so is its radially decreasing rearrangement. 
}%
Furthermore, any minimising sequence $\rho_k$ can be replaced by a radially-decreasing minimising sequence.
However, rigorously proving this intuition is rather technical. A fairly general theorem can be seen in \cite{CarrilloHittmeirVolzoneYao2019}, where the authors take advantage of the continuous Steiner rearrangement.

\section{Numerical methods}
	
	\subsection{Finite Volumes}

Finite volume schemes are a family of schemes for conservation equations which incorporate the idea of transport of mass.
They are easiest to introduce $d = 1$. 
Take a spatial mesh $x_i = i \Delta x$ we aim to approximate numerically the averages
$$
	{\rho_i} \approx \frac{1}{x_{i+\frac 1 2} - x_{i-\frac 1 2}} \int_{x_{i-\frac 1 2}}^{x_{i+\frac 1 2}} \rho(x) \diff x.
$$
The conservation equation \eqref{eq:conservation law} yields
\begin{equation*}
	\frac{\diff}{\diff t} \int_{x_{i-\frac 1 2}}^{x_{i+\frac 1 2}} \rho(t,x) \diff x= -\Big( F(x_{i+\frac 1 2}) - F(x_{i-\frac 1 2})\Big).
\end{equation*}
Discretising in time we arrive at the general form
\begin{equation*}
	\frac{\rho_i^{n+1} - \rho_i^n }{\Delta t} + \frac{F_{i+\frac 12}^{n+1} - F^{n+1}_{i-\frac 1 2}}{\Delta x} = 0.
\end{equation*}
The no-flux condition is approximated by $F_{-N-\frac 1 2} = F_{N + \frac 1 2} = 0$, so $i = -K, \cdots, K$.
For transport problems in which $\mathbf F = \rho \mathbf v$, 
the flux is taken up-wind to preserve positivity of $\rho_i^n$, namely
\begin{equation}
	\label{sec:numerics upwinding}
	F_{i+\frac 1 2 } = \rho_i (v_{i+\frac 1 2})_+  + \rho_{i+1} (v_{i+\frac 1 2})_-,
\end{equation}
where $u_+ = \max\{u,0\}, u_- = \min\{u,0\}$ so that $v = v_+ + v_-$.
For \eqref{eq:ADE} the velocity field $v_{i+\frac 1 2}$ is an approximation of the velocity field
\begin{equation*}
	v_{i+\frac 1 2} \approx -\nabla (U'(\rho) + V + W*\rho) (x_{i+\frac 1 2}).
\end{equation*}

In \cite{BailoCarrilloHu2020}  the authors introduce a scheme with energy decay for \eqref{eq:ADE}. When $W = 0$, if $U$ is concave then so is $E(x,\rho) = U(\rho) + V(x)\rho$ and hence we can proof a version of \eqref{eq:free energy dissipation} with an inequality
\begin{align*}
	\frac 1 {\Delta t} \Bigg( \sum_i E(x_i, \rho_i^{n+1}) &- \sum_i E(x_i, \rho_i^{n}) \Bigg) \\
	&\le \sum_i \frac{\partial E}{\partial \rho}(x_i, \rho_i^{n+1}) \frac{\rho_i^{n+1} - \rho_i^n }{\Delta t}  \\ 
	&\le - \sum_i \frac{\partial E}{\partial \rho}(x_i, \rho_i^{n+1}) \frac{F_{i+\frac 12}^{n+1} - F^{n+1}_{i-\frac 1 2}}{\Delta x} \\
	&= \sum_i \frac{ \frac{\partial E}{\partial \rho}(x_{i+1}, \rho_{i+1}^{n+1}) -  \frac{\partial E}{\partial \rho}(x_i, \rho_{i}^{n+1}) }{\Delta x}    F_{i+\frac 12}^{n+1} .
\end{align*}
Due to the convexity tricked used above, it is natural to take the implicit method
\begin{align*}
	v_{i+\frac 1 2}^{n+1} &= -\frac{ \frac{\partial E}{\partial \rho}(x_{i+1}, \rho_{i+1}^{n+1}) -  \frac{\partial E}{\partial \rho}(x_i, \rho_{i}^{n+1}) }{\Delta x}\\
	F_{i+\frac 1 2 }^{n+1} &= \rho_i^{n+1} (v_{i+\frac 1 2}^{n+1})_+  + \rho_{i+1}^{n+1} (v_{i+\frac 1 2}^{n+1})_-.
\end{align*}
Then we have the energy decay
\begin{align*}
	\frac 1 {\Delta t} \Bigg( \sum_i E(x_i, \rho_i^{n+1}) &- \sum_i E(x_i, \rho_i^{n}) \Bigg) \le - \sum_i \left(\rho_i^{n+1} ( v_{i+\frac 1 2}^{n+1})_+^2 + \rho_{i+1}^{n+1} (v_{i-\frac 1 2}^{n+1})_-^2  \right)  \le 0
\end{align*}
When $W$ is introduced similar estimates are possible, although the bilinear terms need delicate handling. In
 \cite{BailoCarrilloHu2020} the authors prove that a successful scheme is
\begin{subequations} 
	\label{eq:Bailo scheme}
\begin{align}
	\label{eq:Bailo scheme xi}
	\xi_{i}^{n+1} &= U'(\rho_i^{n+1}) + V(x_i) + \Delta x\sum_{j=1}^n W(x_i - x_j) \frac{\rho_j^{n+1} + \rho_j^n}{2} \\
	v_{i+\frac 1 2}^{n+1} &= - \frac{\xi_{i+1}^{n+1} - \xi_i^{n+1}}{\Delta x}, \\
		F_{i+\frac 1 2 }^{n+1} &= \rho_i^{n+1} (v_{i+\frac 1 2}^{n+1})_+  - \rho_{i+1}^{n+1} (v_{i+\frac 1 2}^{n+1})_-,
\end{align}
\end{subequations} 
with the corresponding boundary no-flux conditions. This scheme has energy decay for any $W$ even. 
The authors of \cite{BailoCarrilloHu2020} also prove that under certain assumptions on $W$ the decay of the energy holds for other discretisations without the midpoint $({\rho_j^{n+1} + \rho_j^n})/{2}$.
And a proof of convergence through compactness for the cases $m > 2$ was later introduced in \cite{BailoCarrilloMurakawaSchmidtchen2020} convergence.
This scheme can be adapted to \eqref{eq:ADE Omega} by 
\begin{equation*} 
	\label{eq:Bailo scheme extended}
		\tag{\ref{eq:Bailo scheme xi}'}
	\begin{aligned}
		\xi_{i}^{n+1} &= U'(\rho_i^{n+1}) + V(x_i) + \Delta x\sum_{j=1}^n K(x_i, x_j) \frac{\rho_j^{n+1} + \rho_j^n}{2}.
	\end{aligned}
\end{equation*}

A very similar scheme for \eqref{eq:AE} is studied in \cite{delarue2020ConvergenceAnalysisUpwind}, where the authors prove convergence in Wasserstein sense even for pointy potentials. This scheme is later extended for linear diffusion in \cite{lagoutiereVanishingViscosityLimit}.

\bigskip

A higher order method for the case $U, V = 0$ and $W$ Lipschitz was constructed in \cite{carrillo2021SecondorderNumericalMethod}. The authors prove convergence in 1-Wasserstein norm.

\begin{remark}
\added[id=O, comment = we have added this comment which can be useful to the reader]{%
Some finite-volume methods have been studied actually correspond to the gradient of a discrete energy in a discrete version of the Wasserstein metric, see \cite{reda201715InterpretationFinite}.
}%
\end{remark}

\begin{remark}
	This kind of method have been generalised to $\partial_t \rho =  \diver ( m(\rho) \nabla \frac{\delta F}{\delta \rho})   )$ when $m$ is concave (see \cite{BailoCarrilloHu2021}).
\end{remark}

	\subsection{Methods based on discretising the Wasserstein distance}

There are several possibilities of numerical computing the Wasserstein gradient flow. One could combine a method numerically computing the Wasserstein distance (see, e.g., \cite{peyre2020ComputationalOptimalTransport}) with a numerical optimisation in the JKO scheme.
A very interesting paper in a similar direction is
\cite{carrillo2022PrimalDualMethods}, where the authors combine of the JKO scheme with the Benamou-Brenier formula (recall \eqref{eq:Benamou-Brenier} above) to construct a discrete gradient-flow method, which they call \emph{primal-dual method}.

\bigskip 

A different approach is to use a modified energy for which the gradient flow is easier to compute. This leads to the particle/blob methods we will discuss in the next section.
	\subsection{Particle/Blob methods}

When $U = 0$
\added[id=R1]{%
	and $V, W$ are Lipschitz,
}%
\eqref{eq:ADE} admits exact solutions given by empirical distributions
$$
	\mu_t^{(K)} = \sum_{i=1}^K w_i \delta_{X_t^{(i)}}.
$$
Hence, the PDE can be reduced to a finite set of ODEs, that can be solved by any method for ODEs.
If we are given $\rho_0 \in L^1$, then for any $K$ we approximate it 
\added[id=R1]{%
	by an empirical measure $\mu_0^{(K)}$.	
}%

\bigskip 

When $U \ne 0$ this is no longer the case, because the PDE is ``regularising''. Furthermore, the free of empirical measures is not defined for $U = U_m, m \ge 1$.
However, there are two approaches to approximate the free energy so that the $\Wass_2$-gradient flow admits again these kinds of solutions. 

\subsubsection{The particle method}

A first approach was introduced in \cite{CarrilloPatacchiniSternbergWolansky2016} and perfected in \cite{carrillo2017NumericalStudyParticle}. 
Fix an initial mass
$$
	\mu_0^{(K)} = \sum_{i=1}^K w_i \delta_{X_0^{(i)}}.
$$
Fix these weights of the particles and $K$ the number of particles. Take
\added[id=R1]{%
	the set of empirical distributions of $K$ elements and fixed weights
}%
$$
	\mathcal A_{K,w} (\Rd) = \left\{ \mu = \sum_{i=1}^K w_i \delta_{x_i} : \text{ for some } x_i \in \Rd \right\}.
$$
The aim is to approximate the free energy $\cal F$ so that
\begin{equation} 
	\begin{aligned} 
	\calF_K&\left[\sum_{i=1}^K w_i \delta_{x_i}\right] \defeq \calF\left[ \sum_{i=1}^K w_i \frac{\indicator{B(x_i,r_i)}}{|B(x_i,r_i)|}   \right]  \\
	&= \sum_i |B(x_i,r_i)|^d U\left( \frac{w_i}{|B(x_i,r_i)|} \right) + \sum_{i=1}^K w_i V(x_i) + \frac 1 2 \sum_{i,j=1}^K w_i w_j W(x_i - x_j),
	\end{aligned}
\end{equation} 
for suitably selected radii. Then the scheme is simply to apply a suitable adaptation of the JKO scheme \eqref{eq:JKO} with time-step $\tau$, that is
$$
	\mu_{n+1} \defeq \argmin_{\mu \in \mathcal A_{K,w}} \left( \frac{\Wass_2(\mu, \mu_n)^2}{2} + \tau \calF_K [\mu]  \right) .
$$

\subsubsection{The blob method}

The blob approach aims to modify the energy so that solutions by particles are admissible. To preserve the gradient-flow structure
we modify the free energy by changing the term $\int U( \rho )$.  A popular idea introduced by \cite{CarrilloCraigPatacchini2019} is
\begin{equation*}
	\mathcal U_\eta [\rho]  = \int_\Rd  F \circ  (\eta * \rho)  \diff \rho, \qquad \text{where } F(s) = \frac{U( s  )}{s}
\end{equation*}
where $\eta$ is a smooth probably distribution typically close to a Dirac delta. Then, the equation becomes
\begin{equation*}
	\partial_t \rho = \diver \left(\rho \nabla \frac{\delta U_\eta}{\delta \rho}  \right)
	\qquad 
	\text{where } 
	\frac{\delta U_\eta}{\delta \rho} =  \eta * \Big( \big( F' \circ (\eta * \rho) \big ) \rho\Big ) + F \circ (\eta *  \rho).
\end{equation*}
This problem admits particle-type solutions.

\bigskip

Alternative choices of approximation of the free energy are available. See, e.g., \cite{burger2023PorousMediumEquation,carrillo2023NonlocalApproximationNonlinear}.	
	\subsection{Finite Elements}

Finite element schemes rely on the weak formulation and look for solution in Sobolev spaces. They are extremely useful and accurate for problems which have an $L^2$-gradient flow structure (e.g., the $p$-Laplacian problem instead of \eqref{eq:PME}). 
\added[id=R1, comment=added some references]{%
Nevertheless, authors have found success in this approach, e.g., for \eqref{eq:Keller-Segel un-normalised}-type problems \cite{Epshteyn2009DiscontinuousGalerkinMethods,EpshteynIzmirlioglu2009FullyDiscreteAnalysis,EpshteynKurganov2009NewInteriorPenalty} and later for more general \eqref{eq:ADE} in \cite{BurgerCarrilloWolfram2010MixedFiniteElement}. For more details on finite elements for parabolic problems see \cite{thomee2006GalerkinFiniteElement}.
}%

\section{Other related problems}

\subsection{Second-order problems}

Our family of problems is of first order, since in the particle description we are specifying the velocities of the particles. 
There is a family of models which introduce the type of fields but work in acceleration formulation
\[
	\begin{dcases}
		\frac{\diff x_t^{(i)}}{\diff t} = v_t^{(i)} \\
		\frac{\diff v_t^{(i)}}{\diff t} = -\nabla V(x_t^{(i)}) - \frac 1 N\sum_j \nabla W(x_t^{(i)}-x_t^{(j)}) + \sqrt{2\sigma} \diff \mathcal B_t
		^{
		\added[id=R1]{%
			(i)
		}%
		}
	\end{dcases}
\]
\added[id=R1]{%
	with independent Brownian motions $\mathcal B_t^{(i)}$.
	When $\sigma \ne 0$ we recover a diffusion term. 
}%
This leads to the Vlasov-Poisson-Fokker-Planck equations. We point the reader to \cite{carrillo2019PropagationChaosVlasov} and the references therein.

\subsection{Cahn-Hilliard problem}

Instead of a known potential, some authors have considered the Cahn-Hilliard type problem
$$
\frac{\partial \rho}{\partial t} = \diver(\rho \nabla (-\Delta \rho) ) - \chi \Delta \rho^m,
$$
where the diffusion is of fourth order, and the aggregation is modelled by a second order operator. This problem also has a Wasserstein gradient-flow structure. We point the reader, e.g., to 
\cite{carrillo+esposito+falco+fdezjimCompetingEffectsFourthorder} and the references therein.

\subsection{The (fractional) thin-film problem}

A family of related problem is the so-called thin-film problem and its fractional variant
\begin{equation*}
	\frac{\partial \rho}{\partial t} = \diver\left(  u \nabla ( (-\Delta)^s u + V(x)   )  \right).
\end{equation*}
These are 2-Wasserstein gradient flows, where we replace $U(\rho)$ by $[u]_{H^s}^2$ and $V$ is usually $0$ or $|x|^2/2$. Usually $s \in [0,1]$ where $s = 0$ corresponds in fact to $U_2$. We point the reader to \cite{segatti2020FractionalThinFilm} and the references therein.

\subsection{Cross-diffusion systems}

In our model, all particles are of the same ``type''. Hence, there is only need for one density. If one considers models several ``types'' of particles, we will arrive at a system of PDEs. 
A similar approach to ours can be done in the so-called \textit{cross-diffusion systems}. They correspond to the suitable Wasserstein flow of several-variable energies like
\begin{equation*}
	\calF[\rho_1, \rho_2] = \int_\Rd U(\rho_1, \rho_2) + \frac 1 2 \int_\Rd \rho_1 W_1 * \rho_1 + \frac 12 \int_\Rd \rho_2 W_2 \rho_2 + \int_\Rd \rho_1 K*\rho_2 .
\end{equation*}
See, e.g., 
\added[id=R1]{%
	\cite{carrillo2018ZoologyNonlocalCrossDiffusion,DiFrancescoEspositoFagioli2018,bur2020SegregationEffectsGap,chendausjungel2019RigorousMeanfieldLimit}	
}%
and the references therein.

\subsection{Non-linear mobility}

Several authors have studied the family of problems with so-called \textit{non-linear mobility} $\mathrm{m}$ given by
\begin{equation}
	\frac{\partial \rho}{\partial t} = \diver\left( \mathrm{m} (\rho) \nabla \frac{\delta \calF}{\delta \rho} \right).
\end{equation}
This family of problems correspond to a Wasserstein-type gradient flow, with a modified distance. The correct way to modify the Wasserstein distance is through an adapted Benamou-Brenier formula (recall \eqref{eq:Benamou-Brenier}). The details can be found in 
\added[id=O,comment=some previous references have been added]{%
	\cite{dolbeault2009NewClassTransporta,lisini2010ClassModifiedWasserstein,CarrilloLisiniSavareSlepcev2010}.
}%
It turns out this is only a valid distance if $\mathrm{m}$ is concave.

\subsubsection{The Caffarelli-Vázquez problem with non-linear mobility}

Some authors developed a theory for the \eqref{eq:Caffarelli-Vazquez} with non-linear mobility of the form $\mathrm{m} (\rho) = \rho^{m-1}$ with $m > 0$.
The motivation for came as a fractional generalisation of \eqref{eq:PME} with non-local pressure. A theory of existence and properties of weak solutions was developed in \cite{stan2016FiniteInfiniteSpeed,stan2019ExistenceWeakSolutions}. 
Recently, it was shown that in $d=1$ one can analyse a suitable equation for the mass, for which there is a monotone and convergent scheme
\cite{delteso2023ConvergentFiniteDifferencequadrature}.
In fact, this numerical method for the mass uses an up-winding given for $\rho$ as \eqref{sec:numerics upwinding}.
This group in Trondheim also prove convergence in $\rho$ variable in the so-called Bounded Lipschitz distance (also called Fortet–Mourier distance), where we point the reader to \cite[page 97]{Villani2009}. This distance is defined as an extension of the duality formula \eqref{eq:Kantorovich-Rubinstein} as
\begin{equation*}
	d_{bL} (\mu, \nu) \defeq   \sup \left\{  \int \psi \diff \mu - \int \psi \diff \nu : \text{for } \psi \text{ such that } \|\psi\|_{L^\infty} + \| \nabla \psi \|_{L^\infty} \le 1    \right\},
\end{equation*}
where we point out the addition of the bound of $\psi$ in $L^\infty$.

\subsubsection{Newtonian vortex with non-linear mobility}
Some authors became interested in the case of non-linear mobility version of \eqref{eq:Newtonian vortex} given by
\begin{equation}
	\begin{dcases}
		\frac{\partial \rho}{\partial t} = \diver (\rho^\alpha \nabla v ) \\ 
		-\Delta v = \rho ,
	\end{dcases}
\end{equation}
The cases $\alpha \in (0,1)$ (where $\mathrm m$ is concave) are studied in \cite{CarrilloGomezCastroVazquez2022AIHP}, where the authors show that for $\alpha < 1$ there exist nice self-similar solutions. 
The cases $\alpha > 1$ \cite{CarrilloGomezCastroVazquez2022ANONA}, where the authors find self-similar solutions do not have finite mass, and in fact the attracting solution is a family of characteristic functions. 
The technique in \cite{CarrilloGomezCastroVazquez2022AIHP,CarrilloGomezCastroVazquez2022ANONA} is to find solutions by generalised characteristics of the mass equation.

\subsubsection{Saturation}
The problem \eqref{eq:ADE} with $\mathrm{m}(\rho) = \rho \psi(\rho)$ with $\psi(1) = 0$ has recently gained the attention of some experts. This addition ensures the that we can work with solutions such that $0 \le \rho \le 1$ (in particular no formation of $\delta$), and hence some authors use the term \textit{saturation} to describe this choice. 

\bigskip

The papers \cite{DiFrancescoFagioliRadici2019,Fagioli_2022} explore the case $U = 0$.
The case $U \ne 0$ is, so far, only understood numerically.
A finite-volume numerical scheme was constructed for this case \cite{BailoCarrilloHu2021}. Their numerical results suggest that steady-states will be of the form
$$
	\hat \rho = \min\{ 1, (U')^{-1}( h-V-W*\hat \rho) \}_+
$$
for suitable $h$ such that the total mass is preserved. However, there is currently no rigorous proof that this is the correct choice.

\section*{Acknowledgments}

The author would like to express his gratitude to J. A. Carrillo for providing the opportunity of a two-year post-doc at University of Oxford, where this work was started, and for introducing him to a range of aggregation-diffusion equations.
The author is grateful to J.L. Vázquez for his mentorship and many fruitful conversations.
The author is thankful to A. Fernández-Jiménez for his comments on a preliminary draft of the manuscript.

\bigskip

The author 
\added[id=O]{%
    was partially supported by RYC2022-037317-I and     
}%
PID2021-127105NB-I00 from the Spanish Government, and the Advanced Grant Nonlocal-CPD (Nonlocal PDEs for Complex Particle Dynamics: Phase Transitions, Patterns and Synchronization) of the European Research Council Executive Agency (ERC) under the European Union's Horizon 2020 research and innovation programme (grant agreement No. 883363).%
\added[id=R1,comment={The relevant references have been properly ordered}]{}%

\setlength{\emergencystretch}{1.5em}
\printbibliography

\end{document}